\definecolor{darkblue}{rgb}{0.13,0.13,0.39}
\newcommand{\epsEH}{\mathfrak{e}}
\colorlet{symbols}{black}
\newtheorem{assumption}{Assumption}
\newtheorem{example}{Example}
\newcommand{\set}[1]{\llbracket #1 \rrbracket}
\def\1{\mathbbm{{1}}}
\def\s{\mathfrak{s}}
\def\CCE{\mathbb{E}}
\def\CCV{\mathbb{V}}
\def\CCG{\mathbb{G}}
\def\CCT{\mathbb{T}}
\newcommand\STAR{\scalebox{0.3}{\tikz{\node[draw,fill=black,star,star point height=.7em,minimum size=1em]{};}}}
\def\i{\mathfrak{i}}
\def\CK{\mathcal{K}}
\def\CP{\mathcal{P}}
\def\CA{\mathcal{A}}
\def\CG{\mathcal{G}}
\def\L{\mathtt{L}}
\def\var{\mathtt{var}}
\def\nil{\mathtt{nil}}
\def\CN{\mathcal{N}}
\def\bz{\mathbf{z}}
\def\bp{\mathbf{p}}
\def\kone{\mathbf{k}}
\def\fC{\mathfrak{C}}
\def\Cd{\mathtt{C}}
\def\n{\mathbf{n}}
\def\SC{\mathscr{C}}
\def\SP{\mathscr{P}}
\def\SD{\mathscr{D}}
\def\SY{\mathscr{Y}}
\def\CL{\mathcal{L}}
\newcommand{\smallgeq}{\scaleobj{0.8}{\geq}}
\renewcommand\emptyset{\varnothing}
\def\v{\nu}
\def\w{\omega}
\def\Amin{{A^{\!-}}}
\def\cadlag{c\`{a}dl\`{a}g }
\def\BDG{Burkholder-Davis-Gundy }
\def\CD{\mathcal{D}}
\def\CC{\mathcal{C}}
\def\CI{\mathcal{I}}
\def\fK{\mathfrak{K}}
\newcommand{\M}{\mathbb{M}}
\newcommand{\Mnew}{\mathbb{N}}
\newcommand{\nn}{\mathbbm{n}}
\newcommand{\T}{\mathbb{T}}
\newcommand{\Ff}{\mathscr{F}}
\newcommand{\Nm}{\ov{\M}}
\newcommand{\bM}{\mathbf{M}}
\newcommand{\SR}{\mathscr{R}}
\newcommand{\al}{\alpha}
\newcommand{\ga}{\gamma}
\newcommand{\de}{\delta	}
\renewcommand{\subset}{\subseteq}
\renewcommand{\d}{\mathrm{d}}
\newcommand{\TV}{\mathrm{TV}}
\newcommand{\ov}{\overline}
\newcommand{\F}{\mathscr{F}}
\renewcommand{\hat}{\widehat}
\renewcommand{\phi}{\varphi}
\newcommand{\Le}{\Lambda_\eps}		 					
\renewcommand{\ae}{\star_\eps} 							
\def\Md#1{\bM^{#1}_{\eps, \mathrm{diag}}}
\newcommand*\bigcdot{\mathpalette\bigcdot@{.5}}
\newcommand*\bigcdot@[2]{\mathbin{\vcenter{\hbox{\scalebox{#2}{$\m@th#1\bullet$}}}}}
\colorlet{testcolor}{green!60!black}
\def\drawx{\draw[-,solid] (-3pt,-3pt) -- (3pt,3pt);\draw[-,solid] (-3pt,3pt) -- (3pt,-3pt);}
\tikzset{
	root/.style={circle,fill=testcolor,inner sep=0pt, minimum size=2mm},
	dot/.style={circle,fill=black,inner sep=0pt, minimum size=1mm},
	small_dot/.style={circle,fill=black,inner sep=0pt, minimum size=0.9mm},
	var/.style={circle,fill=white,draw=black,inner sep=0pt, minimum size=2mm},
	var_blue/.style={circle,fill=blue!20,draw=blue,inner sep=0pt, minimum size=2mm},
	var_very_blue/.style={circle,fill=blue,draw=blue,inner sep=0pt, minimum size=2mm},
	var_red_square/.style={regular polygon,regular polygon sides=4, draw, fill=red!20, draw=red, inner sep=0pt, minimum size=2.5mm, shape border rotate=45},
	var_red_triangle/.style={regular polygon,regular polygon sides=3, draw, fill=red!20, draw=red, inner sep=0pt, minimum size=3mm, shape border rotate=180},
	var_red/.style={circle,fill=red!20,draw=red,inner sep=0pt, minimum size=2mm},
	circ/.style={circle,fill=white,draw=black,inner sep=0pt, minimum size=1.2mm},
	keps/.style= {semithick,shorten >=1pt,shorten <=1pt,->},
	dotred/.style={circle,fill=black!50,inner sep=0pt, minimum size=2mm},
	generic/.style={semithick,shorten >=1pt,shorten <=1pt},
	gepsilon/.style={semithick,shorten >=1pt,shorten <=1pt,densely dashed},
	rootlab/.style={font=\scriptsize, transform canvas={yshift=-0.23cm}},
	dist/.style={ultra thick,draw=testcolor,shorten >=1pt,shorten <=1pt},
	testfcn/.style={ultra thick,testcolor,shorten >=1pt,shorten <=1pt,<-},
	testfcnx/.style={ultra thick,testcolor,shorten >=1pt,shorten <=1pt,<-,
		postaction={decorate,decoration={markings,mark=at position 0.6 with {\drawx}}}},
	kprime/.style={semithick,shorten >=1pt,shorten <=1pt,dotted,->},
	kprimex/.style={semithick,shorten >=1pt,shorten <=1pt,densely dashed,->,
		postaction={decorate,decoration={markings,mark=at position 0.4 with {\drawx}}}},
	kernel/.style={semithick,shorten >=1pt,shorten <=1pt,->},
	multx/.style={shorten >=1pt,shorten <=1pt,
		postaction={decorate,decoration={markings,mark=at position 0.5 with {\drawx}}}},
	kernelx/.style={semithick,shorten >=1pt,shorten <=1pt,->,
		postaction={decorate,decoration={markings,mark=at position 0.4 with {\drawx}}}},
	kepsilon/.style={semithick,shorten >=1pt,shorten <=1pt,densely dashed,->},
	kernel1/.style={->,semithick,shorten >=1pt,shorten <=1pt,postaction={decorate,decoration={markings,mark=at position 0.45 with {\draw[-] (0,-0.1) -- (0,0.1);}}}},
	kernel2/.style={->,semithick,shorten >=1pt,shorten <=1pt,postaction={decorate,decoration={markings,mark=at position 0.45 with {\draw[-] (0.05,-0.1) -- (0.05,0.1);\draw[-] (-0.05,-0.1) -- (-0.05,0.1);}}}},
	kernelBig/.style={semithick,shorten >=1pt,shorten <=1pt,decorate, decoration={zigzag,amplitude=1.5pt,segment length = 3pt,pre length=2pt,post length=2pt}},
	rho/.style={dotted,semithick,shorten >=1pt,shorten <=1pt},
	renorm/.style={shape=circle,fill=white,inner sep=1pt},
	labl/.style={shape=rectangle,fill=white,inner sep=1pt},
	xi/.style={circle,fill=symbols!10,draw=symbols,inner sep=0pt,minimum size=1.2mm},
	xix/.style={crosscircle,fill=symbols!10,draw=symbols,inner sep=0pt,minimum size=1.2mm},
	xib/.style={circle,fill=symbols!10,draw=symbols,inner sep=0pt,minimum size=1.6mm},
	xibx/.style={crosscircle,fill=symbols!10,draw=symbols,inner sep=0pt,minimum size=1.6mm},
	not/.style={circle,fill=symbols,draw=symbols,inner sep=0pt,minimum size=0.5mm},
	>=stealth,
	}
\def\drawx{\draw[-,solid] (-3pt,-3pt) -- (3pt,3pt);\draw[-,solid] (-3pt,3pt) -- (3pt,-3pt);}
\tikzset{
	root/.style={circle,fill=testcolor,inner sep=0pt, minimum size=2mm},
	dot/.style={circle,fill=black,inner sep=0pt, minimum size=1mm},
	empty_dot/.style={circle, draw=black,fill=white,inner sep=0pt, minimum size=1mm},
	small_dot/.style={circle,fill=black,inner sep=0pt, minimum size=0.9mm},
	var/.style={circle,fill=white,draw=black,inner sep=0pt, minimum size=2mm},
	var_blue/.style={circle,fill=blue!20,draw=blue,inner sep=0pt, minimum size=2mm},
	var_very_blue/.style={circle,fill=blue,draw=blue,inner sep=0pt, minimum size=2mm},
	var_teal/.style={circle,fill=teal!20, draw=teal,inner sep=0pt, minimum size=2mm},
	var_red_square/.style={regular polygon,regular polygon sides=4, draw, fill=red!20, draw=red, inner sep=0pt, minimum size=2.5mm, shape border rotate=45},
	var_red_triangle/.style={regular polygon,regular polygon sides=3, draw, fill=red!20, draw=red, inner sep=0pt, minimum size=3mm, shape border rotate=180},
	var_red/.style={circle,fill=red!20,draw=red,inner sep=0pt, minimum size=2mm},
	circ/.style={circle,fill=white,draw=black,inner sep=0pt, minimum size=1.2mm},
	keps/.style= {semithick,shorten >=1pt,shorten <=1pt,->},
	keps-random/.style= {semithick,shorten >=1pt,shorten <=1pt,->, decorate, decoration = {zigzag, segment length=1mm, amplitude=1mm}},
	kepsdot/.style= {semithick,densely dashed,shorten >=1pt,shorten <=1pt,->},
	dotred/.style={circle,fill=black!50,inner sep=0pt, minimum size=2mm},
	generic/.style={semithick,shorten >=1pt,shorten <=1pt},
	gepsilon/.style={semithick,shorten >=1pt,shorten <=1pt,densely dashed},
	rootlab/.style={font=\scriptsize, transform canvas={yshift=-0.23cm}},
	dist/.style={ultra thick,draw=testcolor,shorten >=1pt,shorten <=1pt},
	testfcn/.style={ultra thick,testcolor,shorten >=1pt,shorten <=1pt,<-},
	testfcnx/.style={ultra thick,testcolor,shorten >=1pt,shorten <=1pt,<-,
		postaction={decorate,decoration={markings,mark=at position 0.6 with {\drawx}}}},
	kprime/.style={semithick,shorten >=1pt,shorten <=1pt,dotted,->},
	kprimex/.style={semithick,shorten >=1pt,shorten <=1pt,densely dashed,->,
		postaction={decorate,decoration={markings,mark=at position 0.4 with {\drawx}}}},
	kernel/.style={semithick,shorten >=1pt,shorten <=1pt,->},
	multx/.style={shorten >=1pt,shorten <=1pt,
		postaction={decorate,decoration={markings,mark=at position 0.5 with {\drawx}}}},
	kernelx/.style={semithick,shorten >=1pt,shorten <=1pt,->,
		postaction={decorate,decoration={markings,mark=at position 0.4 with {\drawx}}}},
	kepsilon/.style={semithick,shorten >=1pt,shorten <=1pt,densely dashed,->},
	kernel1/.style={->,semithick,shorten >=1pt,shorten <=1pt,postaction={decorate,decoration={markings,mark=at position 0.45 with {\draw[-] (0,-0.1) -- (0,0.1);}}}},
	kernel2/.style={->,semithick,shorten >=1pt,shorten <=1pt,postaction={decorate,decoration={markings,mark=at position 0.45 with {\draw[-] (0.05,-0.1) -- (0.05,0.1);\draw[-] (-0.05,-0.1) -- (-0.05,0.1);}}}},
	kernelBig/.style={semithick,shorten >=1pt,shorten <=1pt,decorate, decoration={zigzag,amplitude=1.5pt,segment length = 3pt,pre length=2pt,post length=2pt}},
	rho/.style={dotted,semithick,shorten >=1pt,shorten <=1pt},
	renorm/.style={shape=circle,fill=white,inner sep=1pt},
	labl/.style={shape=rectangle,fill=white,inner sep=1pt},
	xi/.style={circle,fill=symbols!10,draw=symbols,inner sep=0pt,minimum size=1.2mm},
	xix/.style={crosscircle,fill=symbols!10,draw=symbols,inner sep=0pt,minimum size=1.2mm},
	xib/.style={circle,fill=symbols!10,draw=symbols,inner sep=0pt,minimum size=1.6mm},
	xibx/.style={crosscircle,fill=symbols!10,draw=symbols,inner sep=0pt,minimum size=1.6mm},
	not/.style={circle,fill=symbols,draw=symbols,inner sep=0pt,minimum size=0.5mm},
	>=stealth,
	}
\def\DeclareSymbol#1#2#3{\expandafter\gdef\csname MH@symb@#1\endcsname{\tikz[baseline=#2,scale=0.15,draw=symbols]{#3}}\expandafter\gdef\csname MH@symb@#1s\endcsname{\scalebox{0.7}{\tikz[baseline=#2,scale=0.15,draw=symbols]{#3}}}}
\def\<#1>{\csname MH@symb@#1\endcsname}
\begin{document}

\date{\today}
\title{Martingale-driven integrals and singular SPDEs}
\author{P.~Grazieschi$^1$, K.~Matetski$^2$ and H.~Weber$^3$}
\institute{University of Bath, \email{p.grazieschi@bath.ac.uk} \and Michigan State University, \email{matetski@msu.edu} \and University of M\"{u}nster, \email{hendrik.weber@uni-muenster.de}}
\date{\today}
\titleindent=0.65cm

\maketitle

\begin{abstract}
We consider multiple stochastic integrals with respect to \cadlag martingales, which approximate a cylindrical Wiener process. We define a chaos expansion, analogous to the case of multiple Wiener stochastic integrals, for these integrals and use it to show moment bounds. Key tools include an iteration of the \BDG inequality and a multi-scale decomposition similar to the one developed in \cite{HQ18}.
 Our method can be combined with the recently developed discretisation framework for regularity structures  \cite{HM18, EH19} to prove convergence of interacting particle systems to singular stochastic PDEs.  A companion article \cite{3dIsingKac} applies the results of this paper to prove convergence of a rescaled Glauber dynamics for the three-dimensional Ising-Kac model near criticality to the $\Phi^4_3$ dynamics on a torus.
\end{abstract}

\setcounter{tocdepth}{1}
\tableofcontents

\section{Introduction}

We consider a class of \cadlag martingales which approximate a cylindrical Wiener process over a $d$-dimensional spatial domain, i.e. integrated-in-time space-time white noise. 
We develop a theory of iterated integrals with respect to these martingales and derive moment bounds. 

Our results serve as a technical tool for proving the convergence of Interacting Particle Systems (IPSs) to solutions of non-linear stochastic partial differential equations (SPDEs). The limiting SPDEs are usually of the form 
\begin{equation}\label{eq:general}
	\CL u = F(u,\nabla u) + \sigma(u) \xi,
\end{equation}
where $\CL$ is a linear parabolic operator (e.g. $\CL = \partial_t - \Delta$), $\xi$ is an irregular random noise (e.g. a Gaussian white noise) and $F$ and $\sigma$ are  local non-linearities. 
There are by now a number of convergence results of this type. These include $1+1$-dimensional surface growth models rescaling to the KPZ equation
\begin{equation}\label{eq:KPZ}
\partial_t  h - \partial_x^2 h = -  (\partial_x h)^2 + \xi,
\end{equation}
e.g. \cite{BG,MR3176353,Dembo:2013aa,1505.04158,1602.01908}, long range (Kac) spin models 
rescaling to $\phi^{2n}$ dynamics
\begin{equation}\label{eq:phi_2n}
\partial_t \phi - \Delta \phi = - \phi^{2n-1} + \xi,
\end{equation}
in one  \cite{MR1317994,FR} and two dimensions \cite{MR1661764,IsingKac,ShenWeber,Iberti}  as well as
diffusions in random environment rescaling to the parabolic Anderson model / multiplicative stochastic heat equation 
\begin{equation}\label{eq:SHE}
\partial_t u - \Delta u = u \xi, 
\end{equation}
\cite{MartinPerkowski,EH21}. Ultimately, the  motivating goal of the theory developed in this article is to show the convergence of the Ising-Kac model to the $\phi^4$ dynamics in three dimensions, and this is accomplished in our companion article \cite{3dIsingKac}. 

A common feature of all of these limiting results is that particle systems are simultaneously rescaled (i.e. observed on large scales) while a certain parameter is changed. The specific nature of this parameter depends on the model under consideration; examples are the strength of the \emph{weak} asymmetry in exclusion processes approximating the KPZ equation \cite{BG}, or the range of the interaction in Kac-models \cite{MR1661764}.  The typical strategy is to tune down the effect of the ``non-linearity'' as one moves to larger scales. 
This procedure is necessary to obtain convergence to one of the SPDEs \eqref{eq:KPZ}, \eqref{eq:phi_2n}, \eqref{eq:SHE}   and  reflects the fact that the SPDEs are themselves not scale-invariant. The fact that a relatively small class of SPDEs arises as scaling limit of this type for a relatively large number of particle systems sharing just a few key characteristics is sometimes referred to as  weak universality.

A key  technical challenge in deriving such scaling results is the low regularity of the solutions of the limiting equations \eqref{eq:KPZ},\eqref{eq:phi_2n},\eqref{eq:SHE}: the noise term $\xi$ is typically very irregular, leading to irregular solutions which in turn lead to difficulties in dealing with the non-linearities. This problem does not appear in more common Gaussian fluctuation limits \cite{KL99} --- while the solutions of the limiting equations there are typically also irregular, this is less problematic due to the absence of a non-linear term.
Good theories for non-linear SPDEs  and their renormalisation have only been developed over the last years, including Hairer's theory of regularity structures \cite{Regularity}, the theory of paracontrolled distributions put forward by Gubinelli, Imkeller and Perkowski \cite{MR3406823} and more recently theories of weak solutions for specific equations, in particular the KPZ equation \cite{MR3176353,MR3327509,MR3758149,MR4168394}.

The theory of regularity structures and the theory of paracontrolled distributions both build on a two-step approach: first, the construction of approximate solutions building a local expansion (the \emph{model} in the jargon of \cite{Regularity}) which relies on probabilistic tools, in particular Gaussian analysis and explicit calculations of covariance functions, and  second, analytic techniques (in particular regularity estimates and commutator estimates) for  dealing with the remainder. 
The weak solution theories developed in \cite{MR3176353,MR3327509,MR3758149,MR4168394} use a very different approach and make explicit use of the invariant Gaussian measure to give a direct characterisation of the generator of the dynamics. 

In principle, both approaches can be used to study scaling limits. In situations, where a simple invariant measure for an interacting particle system is given, the weak solution approach has proved highly efficient, see e.g.  \cite{MR3176353,MR4312852,MR4060852,MR3908648}. The approach which consists of mimicking the theory of regularity structure / paracontrolled distributions has also been implemented in a few examples, in particular \cite{IsingKac,MartinPerkowski,MR3592748,EH21}. 
Still,  implementing this programme for ``interesting'' limiting equations remains a challenging enterprise: for the second, deterministic, step of the analysis a systematic theory has been developed in \cite{HM18,EH19}, but the first probabilistic part remains challenging, because the number of terms in this perturbative expansion (the ``trees'') can become prohibitively large when looking at interesting equations. For the continuum there is by now a very systematic treatment for the trees (see \cite{HQ18,1612.08138,OttoTsatsoulisAlia,Hairer_Steele}). The aim of this paper is to develop a --- at least somewhat --- systematic approach to bound these trees for approximations of white noise. A particular focus is on the jump martingales that typically arise in the analysis of IPSs. 

On a technical level: the noise approximations we deal with are of bounded variation, but discontinuous because of the jumps. Therefore, the non-linear functionals that make up the model can rigorously be written in terms of integrals with respect to product measure in the underlying noise. We then decompose these integrals according to ``diagonals'' or ``contractions''. This is in the spirit of the Wiener chaos decomposition, however many more terms than in the Gaussian case arise, as in the latter only diagonals where precisely two coordinates coincide, make a non-vanishing contribution. In our noise approximations, many more ``diagonals'' appear, and  we aim to show that their impact vanishes as space-time white noise is approached. 
Our main technical tool is an iteration of a \BDG (BDG) type inequality. For our purpose, the most convenient form is in terms of the predictable quadratic variation with an error term that depends on the size of jumps, as was used previously in \cite[Lemma 4.1]{IsingKac} . The advantage is that under our assumptions (which are motivated by the analysis of the Ising-Kac model \cite{3dIsingKac}), explicit and optimal bound on the predictable quadratic variation are available. The error term does not matter too much in the Ising-Kac application, because the size of individual jumps is suppressed by the smoothing from the Kac-potential.
Another key assumption we need to make, is that the magnitude of the jumps of the martingales is fixed by a deterministic constant. This allows to rewrite contractions of an odd number of variables in terms of a martingale and ultimately permits to prove that in the Kac-Ising application these contractions  vanish in the limit, even though they are integrated against a very singular kernel. 

\subsection{Structure of the article}

In Section~\ref{sec:MultIntegrals} we define multiple stochastic integrals with respect to \cadlag~square integrable martingales. In Section~\ref{sec:OneFoldBounds} we derive moment bounds on stochastic integrals with respect to only one variable, while moment bounds on multiple integrals are obtained in Section~\ref{sec:GeneralBounds}. Section~\ref{sec:renormalized-integrals} is devoted to renormalised stochastic integrals and their moment bounds. In Section~\ref{sec:GeneralizedConvolutions} we analyse stochastic integrals with kernels given by generalised convolutions, which are typical objects in the theory of regularity structures. As an example, we apply the result in Section~\ref{sec:applicationStochQuant} to a discrete approximation of the $\Phi^4_3$ equation. 

\subsection{Notation}
We use the standard notation $\N = \{ 1, 2, 3, \ldots \}$ for the set of natural numbers, $\N_0$ for $\N \cup \{ 0 \}$ and the set $\R_+ := [0, \infty)$ for the time variables. For $n \in \N$ we define $\set{n} := \{ 1, \ldots, n \}$.\label{lab:set_n} We use $\1_A$ for the indicator function of the set $A$.

For $\Lambda_0$ being either $(\R / \Z)^d$ or $\R^d$ we use the standard notation $\SD'(\Lambda_0)$ for the space of distributions on $\Lambda_0$. For $n \geq 0$, the space $\SC^n(\Lambda_0)$ contains all $n$-times continuously differentiable functions on $\Lambda_0$, and we write $\SC(\Lambda_0)$ for this space when $n = 0$. The Skorokhod space of \cadlag functions on $[0, T]$ with values in $\SD'(\Lambda_0)$ is denoted by $D([0,T], \SD'(\Lambda_0))$. 

Given a random variable $X$ and some $p \geq 1$, we use the following shorthand notation for the stochastic $L^p$ norm
\begin{equation}\label{eq:MomentNotation}
	\E_p X :=  \E \big[ | X |^p \big]^{1 / p}.
\end{equation}

In estimates we often use ``$\lesssim$'', which means that the bound ``$\leq$'' holds up to a constant which is independent of the quantities relevant in our statements, which will be always clear from the context. If we want to indicate dependence of the proportionality constant on some parameters $\alpha$, $\beta$, $\ldots$, we write ``$\lesssim_{\alpha, \beta, \ldots}$''.

Finally, let $\Le = (\eps \Z \slash \Z)^d$ be a discrete torus with mesh size $\eps > 0$. 
For $T > 0$, $p \geq 1$ and for a function $F: [0, T] \times \Le \to \R$, we define
\begin{equation}\label{eq:norm-eps}
	\Vert F \Vert_{L^p_\eps} := \biggl( \eps^d \sum_{x \in \Le} \int_{0}^T | F(r,x) |^p \d r \biggr)^{\frac{1}{p}},
\end{equation}
that is, we take the $L^p$ norm in time and the $l^p$ norm in space with a weight $\eps^d$ on the points of the lattice. This and several other norms in the article depend on the parameter $T$, but we omit this dependence from our notation. 

\subsection*{Acknowledgments}
PG was supported by a scholarship from the EPSRC Centre for Doctoral Training in Statistical Applied Mathematics at Bath (SAMBa), under the project EP/L015684/1.

KM was partially supported by NSF grant DMS-2321493. HW was supported by the Royal Society through the University Research Fellowship UF140187, by the Leverhulme Trust through a Philip Leverhulme Prize and by the European Union (ERC, GE4SPDE, 101045082). HW acknowledges funding by the Deutsche Forschungsgemeinschaft under Germany’s Excellence Strategy EXC 2044 390685587, Mathematics M\"{u}nster: Dynamics -- Geometry -- Structure.

PG and HW thank the Isaac Newton Institute for Mathematical Sciences for hospitality during the programme \textit{Scaling limits, rough paths, quantum field theory}, which was supported by EPSRC Grant No. EP/R014604/1.

\section{Integrals with respect to \cadlag~martingales}
\label{sec:MultIntegrals}

\subsection{Properties of \cadlag~martingales}
\label{sec:Appendix2}

Following \cite[Ch.~I.4]{JS03}, we recall some properties of martingales which are used in the article. Let $(M_t)_{t \geq 0}$ and $(N_t)_{t \geq 0}$ be two \cadlag~square-integrable martingales on the same filtered probability space. Their {\it predictable quadratic covariation} $\langle M, N\rangle_t$ is the unique adapted process with bounded total variation, such that $M_t N_t - \langle M, N\rangle_t$ is a martingale. The {\it quadratic covariation} $[M, N]_t$ is defined by
\begin{equation}
	[M, N]_t := M_t N_t - M_0 N_0 - \int_{0}^t M_{s^-} \d N_s - \int_{0}^t N_{s^-} \d M_s\;,
\end{equation}
where $M_{s-} := \lim_{r \uparrow s} M_{r}$ is the left limit of $M$ at time $s$. Another way to define these quadratic covariations is the following: if $0 = t_0 \leq \cdots \leq t_n = t$ is a partition with diameter $\max_i (t_{i+1} - t_{i})$ tending to zero as $n \to \infty$, then $[M, N]_t$ is equal to the limit in probability of the sums $\sum_{i=0}^{n-1} (M_{t_{i+1}} - M_{t_{i}}) (N_{t_{i+1}} - N_{t_{i}})$ as $n \to \infty$ (see \cite[Thm.~I.4.47]{JS03}), and $\langle M, N\rangle_t$ is the probability limit of the sums $\sum_{i=0}^{n-1} \E [(M_{t_{i+1}} - M_{t_{i}}) (N_{t_{i+1}} - N_{t_{i}}) | \Ff_{t_i} ]$, where $(\Ff_t)_{t \geq 0}$ is the underlying filtration \cite[Prop.~I.4.50]{JS03}. The difference of the two bracket processes $[M, N]_t - \langle M, N\rangle_t$ is always a \cadlag~martingale \cite[Prop.~I.4.50]{JS03}. In the case $M = N$, it will be convenient to use the shorthands $[M]_t = [M, M]_t$ and $\langle M \rangle_t = \langle M, M\rangle_t$.

We will use the \BDG inequality in the following form, which is obtained by approximating $M$ by discrete-time martingales and applying the discrete-time \BDG inequality \cite{HH80}. 

\begin{proposition}\label{prop:BDG}
	Let $(M_t)_{t \in [0, T]}$ be a \cadlag square integrable martingale. Then, for any $p \geq 1$ there exists a constant $C > 0$ depending on $p$ such that 
	\begin{equation}\label{eq:BDG}
		\E \biggl[ \sup_{t \in [0, T]} | M_t |^p \biggr]^{\frac{1}{p}} \leq C \biggl( \E \left[ \langle M \rangle_t^{p/2}\right]^{\frac{1}{p}} + \E \biggl[ \sup_{t \in [0, T]} | \Delta_t M |^p \biggr]^{\frac{1}{p}} \biggr),
	\end{equation}
	where $\Delta_t M := M_t - M_{t-}$ is a jump at time $t$.
\end{proposition}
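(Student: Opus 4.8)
The statement is a continuous-time BDG inequality with the quadratic variation expressed via the \emph{predictable} bracket $\langle M\rangle$ plus a jump correction term. The plan is to reduce to the discrete-time BDG inequality of \cite{HH80} by sampling $M$ along a sequence of refining partitions, and then to translate the discrete quadratic variation into the predictable bracket at the cost of the jump error term. Throughout, fix $p\geq 1$ and note that by the martingale property it suffices to work on $[0,T]$ and to bound $\E[\sup_{t\in[0,T]}|M_t|^p]$; the left limit $M_{t-}$ and the running maximum are handled using \cadlag regularity.

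First I would fix a partition $\pi\colon 0=t_0<t_1<\dots<t_n=T$ and apply the discrete-time BDG inequality to the discrete-time martingale $(M_{t_k})_{k=0}^n$ with respect to the filtration $(\Ff_{t_k})_k$. In the version of \cite{HH80} with a jump remainder this yields
\begin{equation*}
	\E\Bigl[\max_{k\leq n}|M_{t_k}|^p\Bigr]^{1/p}\leq C\biggl(\E\Bigl[\bigl({\textstyle\sum_{k}}(M_{t_{k+1}}-M_{t_k})^2\bigr)^{p/2}\Bigr]^{1/p}+\E\Bigl[\max_k|M_{t_{k+1}}-M_{t_k}|^p\Bigr]^{1/p}\biggr),
\end{equation*}
with $C$ depending only on $p$. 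Now I refine the partition so that its mesh tends to $0$. On the left, $\max_{k\leq n}|M_{t_k}|\uparrow\sup_{t\in[0,T]}|M_t|$ by right-continuity (choosing partitions whose points become dense), and monotone convergence (or Fatou) passes the limit through. For the first term on the right, by \cite[Thm.~I.4.47]{JS03} the sums $\sum_k(M_{t_{k+1}}-M_{t_k})^2$ converge in probability to $[M]_T$, so along a subsequence the $p/2$-powers converge a.s.; combined with the discrete BDG bound itself giving uniform $L^1$-control, one upgrades this (via uniform integrability, or simply by invoking Fatou after passing to a subsequence) to $\E[[M]_T^{p/2}]^{1/p}$ in the limit. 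For the jump term, $\max_k|M_{t_{k+1}}-M_{t_k}|\to\sup_{t\in[0,T]}|\Delta_t M|$ as the mesh shrinks, again by \cadlag regularity, and Fatou gives the bound $\E[\sup_t|\Delta_t M|^p]^{1/p}$.

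This already proves a BDG inequality with $[M]_T$ in place of $\langle M\rangle_T$. The final step is to replace $[M]$ by $\langle M\rangle$. Since $[M]_T-\langle M\rangle_T=:N_T$ is itself a \cadlag martingale (by \cite[Prop.~I.4.50]{JS03}), write $[M]_T^{p/2}\leq 2^{p/2}(\langle M\rangle_T^{p/2}+|N_T|^{p/2})$ when $p\geq 2$ (and use concavity of $x\mapsto x^{p/2}$ directly when $1\le p<2$). If $p/2\geq 1$, apply BDG once more to the martingale $N$: one gets $\E[|N_T|^{p/2}]^{2/p}\lesssim \E[[N]_T^{p/4}]^{2/p}+\E[\sup_t|\Delta_t N|^{p/2}]^{2/p}$, and one checks that $[N]$ and $\Delta N$ are controlled by the quadratic variation and jumps of $M$ — indeed $\Delta_t[M]=(\Delta_t M)^2$ and $\Delta_t\langle M\rangle$ is predictable and bounded by the conditional jump sizes, so both contributions fold back into the two terms already on the right-hand side of \eqref{eq:BDG}, possibly after relabeling the constant $C$. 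The case $1\le p<2$ is softer: $(\,\cdot\,)^{p/2}$ is subadditive, so $[M]_T^{p/2}\leq \langle M\rangle_T^{p/2}+|N_T|^{p/2}$ and $\E|N_T|^{p/2}\leq\E|N_T|\cdot$(trivial)$\,$; more carefully one bounds $\E|N_T|^{p/2}$ by a constant times $\E[\sup_t|\Delta_t M|^p]^{?}$ using that $N$ is a pure-jump–type difference.

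The main obstacle is the last step: cleanly dominating the correction $[M]-\langle M\rangle$ by the two right-hand terms in \eqref{eq:BDG} without introducing new quantities. The honest route is to observe that the jumps of both $[M]$ and $\langle M\rangle$ are governed by $(\Delta M)^2$ (exactly for $[M]$, in conditional expectation for $\langle M\rangle$), so the cross term from a further BDG application on $N=[M]-\langle M\rangle$ produces only $\sup_t|\Delta_t M|^p$-type contributions and a piece already dominated by $\langle M\rangle_T^{p/2}$; the bookkeeping is routine once one commits to splitting at $p=2$. An alternative, and perhaps the cleanest, is simply to cite the continuous-time BDG inequality in the predictable-bracket-with-jump-error form directly, as was done in \cite[Lemma~4.1]{IsingKac}, from which \eqref{eq:BDG} is immediate; the sketch above indicates why that form holds.
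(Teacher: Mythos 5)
The paper's own proof is a one-sentence reduction: sample $M$ along refining partitions and invoke the discrete-time BDG inequality of Hall--Heyde \cite{HH80}. The crucial point you miss is that the Hall--Heyde (Burkholder) inequality is stated directly in terms of the \emph{conditional} quadratic variation $\sum_k \E[(M_{t_{k+1}}-M_{t_k})^2\,|\,\Ff_{t_k}]$ together with the jump-maximum term, not in terms of the raw squared increments. That is, the discrete inequality you should be quoting is
\begin{equation*}
\E\Bigl[\max_{k\le n}|M_{t_k}|^p\Bigr]^{1/p}\le C\biggl(\E\Bigl[\bigl(\textstyle\sum_k\E[(M_{t_{k+1}}-M_{t_k})^2\,|\,\Ff_{t_k}]\bigr)^{p/2}\Bigr]^{1/p}+\E\Bigl[\max_k|M_{t_{k+1}}-M_{t_k}|^p\Bigr]^{1/p}\biggr).
\end{equation*}
Since, as recalled in Section~\ref{sec:Appendix2}, these conditional sums converge in probability to $\langle M\rangle_T$ (see \cite[Prop.~I.4.50]{JS03}), the first right-hand term already produces $\E[\langle M\rangle_T^{p/2}]^{1/p}$ in the limit, and your entire detour via $[M]$ and the subsequent attempt to trade $[M]$ for $\langle M\rangle$ becomes unnecessary.

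The substitute route you then outline is the weak point, and you flag it yourself. Applying BDG once more to $N=[M]-\langle M\rangle$ does not close cleanly: $[N]_T=\sum_t\bigl((\Delta_t M)^2-\Delta_t\langle M\rangle\bigr)^2$ involves both $\sum_t(\Delta_t M)^4\le(\sup_t|\Delta_t M|)^2\,[M]_T$, which reintroduces $[M]$ and so is circular, and $\sum_t(\Delta_t\langle M\rangle)^2$, which is a predictable-jump quantity that is not one of the two terms appearing on the right of \eqref{eq:BDG} and is not controlled under the stated hypotheses (in particular $\langle M\rangle$ need not be continuous). A second issue is the limit passage itself: Fatou yields $\E[[M]_T^{p/2}]\le\liminf_\pi\E[Q_\pi^{p/2}]$, which bounds the wrong side; to pass the discrete upper bound through to $\eps\to 0$ you need either uniform integrability of $Q_\pi^{p/2}$ or to work with the predictable sums (which are nondecreasing in the refinement and for which the expectation of the limit is easier to identify), neither of which you establish. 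Using the correct Hall--Heyde form removes both issues at once.
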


\subsection{Assumptions on martingales}

Let $d \in \N$ and let $\Lambda_0$ be a $d$-dimensional torus $(\R \slash \Z)^d$. For $\eps > 0$, let $\Le$\label{lab:domains} be a discretisation of $\Lambda_0$ with mesh size $\eps$, i.e. $\Le$ is a $d$-dimensional discrete torus $(\eps \Z \slash \Z)^d$ (in this case we need $\eps^{-1}$ to be integer). The moment bounds for stochastic integrals, which we prove in the following sections, depend on the Lebesgue measure of the domain $\Lambda_0$, which is bounded. Let $\CD_\eps := \R_+ \times \Lambda_\eps$ be a discretised space-time domain, and let $\CD_{\eps, t} := [0, t) \times \Lambda_\eps$  be the space-time domain with time horizon $t > 0$. 

For a function $u_\eps$ on the domain $\CD_\eps$, we introduce its natural extensions to the space of distributions
\begin{equation}\label{eq:extension}
(\iota_\eps u_\eps)(\phi) := \sum_{x \in \Lambda_\eps} \eps^d \int_{\R_{+}}\!\! u_\eps(t,x) \phi(t,x) \d t, \qquad (\iota_\eps u_\eps)(t, \psi) := \sum_{x \in \Lambda_\eps} \eps^d u_\eps(t,x) \psi(x),
\end{equation}
where $\phi : \R_{+} \times \Lambda_0 \to \R$ and $\psi : \Lambda_0 \to \R$ are smooth compactly supported functions.

Let  $(\Omega, \Ff, (\Ff_t)_{t \geq 0}, \P)$ be a filtered probability space, which satisfies the ``usual conditions'' (i.e. completeness and right-continuity \cite[Def.~I.1.3]{JS03}). We then introduce a family of \cadlag martingales $( \M_{\eps}(t, x) )_{t \geq 0}$, indexed by points $x \in \Le$. Let $\M_{\eps}(t-, x) = \lim_{\de \to 0+} \M_{\eps}(t-\de, x)$ be the left-limit of $\M_{\eps}(x)$ at time $t$ and let $\Delta_t \M_{\eps}(x) := \M_{\eps}(t, x) - \M_{\eps}(t-, x)$ denote the jump at time $t$. We make the following assumption on these martingales.

\begin{assumption} \label{a:Martingales}
	For $\eps > 0$, we assume that $(\M_{\eps}(t, x))_{t \geq 0}$ are \cadlag~square-integrable martingales with the following properties.
	\begin{enumerate}
		\item\label{it:bracket} The predictable quadratic covariation $\big\langle \M_{\eps}(x), \M_{\eps}(y) \big\rangle_t$ vanishes whenever $x \neq y$, and
		\begin{equation}\label{eq:quadr_var_formula}
			\big\langle \M_{\eps}(x)\big\rangle_t = \eps^{-d} \int_0^t \C_{\eps}(s, x) \d s,
		\end{equation}
		where $(s, x) \mapsto \C_{\eps}(s, x)$ is a progressively measurable stochastic process satisfying $|\C_{\eps}(s, x) | \lesssim 1$ a.s. uniformly in $s$ and $x$.  The proportionality constant in this bound is non-random.
		
		\item\label{it:jumps1} Two martingales almost surely never jump simultaneously, i.e. for any $T > 0$
		\[ \P \bigl( \Delta_t \M_{\eps}(x) \Delta_t \M_{\eps}(y) = 0,\, \forall x \neq y,\, \forall t \in [0,T] \bigr) = 1. \]
		
		\item\label{it:jumps2} There exist $\kone > -\frac{d}{2}$\label{lab:k2} and a non-random value $c > 0$ such that if $\Delta_t \M_{\eps}(x) \neq 0$, then $|\Delta_t \M_{\eps}(x)| = c \eps^{\kone}$ a.s. for all $x \in \Le$ and $t \geq 0$.
		\item \label{it:dynamics} The martingale $\M_{\eps}(s, x)$ follows a dynamics which can be expressed in the form 
		\begin{equation}\label{eq:martingales_dynamics}
			\M_{\eps}(t, x) = J_\eps(t, x) - \eps^{-\kone - d} \int_0^t \Cd_\eps(s, x) \d s,
		\end{equation}
		where $t \mapsto J_\eps(t, x)$ is a pure jump process (i.e. $J_\eps(t, x) = \sum_{0 \leq s \leq t} \Delta_s \M_{\eps}(x)$) and where $t \mapsto  \Cd_\eps(t, x)$ is  a progressively measurable process such that $| \Cd_\eps(t, x) | \lesssim 1$ a.s. uniformly in $x$ and $t$.  The proportionality constant in the last estimate is non-random.
	\end{enumerate}
\end{assumption}

\begin{remark}
	Assumption~\ref{a:Martingales}\eqref{it:bracket} implies that in the case $\C_{\eps}(s, x) = 1$ the quadratic variation of the martingales approximates the quadratic variation of a cylindrical Wiener process, see also the following Lemma \ref{lem:Wiener}. Assumption~\ref{a:Martingales}\eqref{it:jumps1} is satisfied in many applications, e.g. when jumps are sub-sampled from independent Poisson processes. Assumption~\ref{a:Martingales}\eqref{it:jumps2} implies that the size of an individual jump is smaller than the size of $\M_{\eps}(t, x)$ for bounded $t$, which is of order $\eps^{-d/2}$ by Assumption~\ref{a:Martingales}\eqref{it:bracket}. We show in Lemma~\ref{lem:jumps} below that Assumptions ~\ref{a:Martingales}\eqref{it:bracket} and \ref{a:Martingales}\eqref{it:jumps2} combined imply that  jumps happen with frequency $\eps^{d + 2 \kone}$.
\end{remark}

We will use the following martingales (see Section~\ref{sec:Appendix2})
\begin{equation}\label{eq:renorm-martingale}
	\Nm_\eps(t, x) := \eps^{-\kone} \bigl( \big[ \M_{\eps}(x) \big]_t - \big\langle \M_{\eps}(x) \big\rangle_t \bigr).
\end{equation}
The multiplier $\eps^{-\kone}$ in \eqref{eq:renorm-martingale} is chosen to have the following. 

\begin{lemma}\label{lem:martingales-N}
The martingales $\Nm_\eps(t,x)$ satisfy Assumption~\ref{a:Martingales} with the same value of $\kone$ (but with the constant $c^2$ in place of $c$) and with $\Cd_\eps$ replaced by $\C_{\eps}$.
 
\end{lemma}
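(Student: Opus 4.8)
The plan is to verify the four items of Assumption~\ref{a:Martingales} for $\Nm_\eps(t,x)$ directly from the corresponding properties of $\M_\eps(t,x)$, using the structure of the quadratic variation of a pure-jump-plus-drift martingale whose jumps have deterministic size $c\eps^{\kone}$. The crucial observation, which I would establish first, is an explicit formula for $[\M_\eps(x)]_t$: since $\langle\M_\eps(x)\rangle_t$ is continuous (it is the time-integral in \eqref{eq:quadr_var_formula}), the jumps of $[\M_\eps(x)]_t$ coincide with those of the pure-jump part, namely $\Delta_t[\M_\eps(x)] = (\Delta_t\M_\eps(x))^2 = c^2\eps^{2\kone}\1_{\{\Delta_t\M_\eps(x)\neq 0\}}$ by Assumption~\ref{a:Martingales}\eqref{it:jumps2}, and $[\M_\eps(x)]_t$ is itself pure-jump (its continuous part vanishes because $\M_\eps$ has bounded variation between jumps). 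Hence $[\M_\eps(x)]_t = c^2\eps^{2\kone} N^x_t$ where $N^x_t$ counts the jumps of $\M_\eps(x)$ up to time $t$, and therefore
\[
\Nm_\eps(t,x) = \eps^{-\kone}\bigl( c^2\eps^{2\kone} N^x_t - \langle\M_\eps(x)\rangle_t\bigr) = c^2\eps^{\kone} N^x_t - \eps^{-\kone}\langle\M_\eps(x)\rangle_t .
\]
That $\Nm_\eps(x)$ is a \cadlag\ martingale is immediate from $[\M]-\langle\M\rangle$ being a martingale (quoted in Section~\ref{sec:Appendix2}); square-integrability follows since $N^x_t$ has finite moments of all orders once one knows, as in the cited Lemma~\ref{lem:jumps}, that jumps occur at rate $\eps^{d+2\kone}$, or more elementarily from $[\M]_t \le \langle\M\rangle_t + (\text{martingale})$ and the a.s.\ bound on $\C_\eps$.

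Next I would check the four items. For \eqref{it:bracket}: the jumps of $\Nm_\eps(x)$ are $\Delta_t\Nm_\eps(x) = c^2\eps^{\kone}\1_{\{\Delta_t\M_\eps(x)\neq0\}}$, which are nonzero exactly when $\M_\eps(x)$ jumps; since by Assumption~\ref{a:Martingales}\eqref{it:jumps1} the martingales $\M_\eps(x)$ and $\M_\eps(y)$ never jump together for $x\neq y$, the same holds for $\Nm_\eps(x)$ and $\Nm_\eps(y)$, whence $[\Nm_\eps(x),\Nm_\eps(y)]_t = 0$ and a fortiori $\langle\Nm_\eps(x),\Nm_\eps(y)\rangle_t = 0$ for $x\neq y$. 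For the diagonal, $[\Nm_\eps(x)]_t = \sum_{s\le t}(\Delta_s\Nm_\eps(x))^2 = c^4\eps^{2\kone} N^x_t$, so its predictable compensator is $\langle\Nm_\eps(x)\rangle_t = c^4\eps^{2\kone}\cdot(\text{compensator of }N^x_t)$. But the compensator of $N^x_t$ is obtained from the $\M$-side identity: $\langle\M_\eps(x)\rangle_t = $ compensator of $[\M_\eps(x)]_t = c^2\eps^{2\kone}\cdot(\text{compensator of }N^x_t)$, so the compensator of $N^x_t$ equals $c^{-2}\eps^{-2\kone}\langle\M_\eps(x)\rangle_t = c^{-2}\eps^{-2\kone-d}\int_0^t\C_\eps(s,x)\,\d s$. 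Plugging in gives
\[
\langle\Nm_\eps(x)\rangle_t = c^4\eps^{2\kone}\cdot c^{-2}\eps^{-2\kone-d}\int_0^t\C_\eps(s,x)\,\d s = \eps^{-d}\int_0^t c^2\,\C_\eps(s,x)\,\d s,
\]
which is exactly \eqref{eq:quadr_var_formula} for $\Nm_\eps$ with $\C_\eps$ unchanged (the extra $c^2$ is absorbed into the non-random proportionality constant, consistent with the a.s.\ bound $|\C_\eps|\lesssim 1$). Item \eqref{it:jumps1} for $\Nm_\eps$ is exactly the statement just used about simultaneous jumps. Item \eqref{it:jumps2} holds with the \emph{same} $\kone$ but constant $c^2$ in place of $c$: whenever $\Nm_\eps(x)$ jumps, $|\Delta_t\Nm_\eps(x)| = c^2\eps^{\kone}$ a.s., and $\kone>-d/2$ is inherited.

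Finally, for \eqref{it:dynamics} I would write $\Nm_\eps(t,x) = c^2\eps^{\kone} N^x_t - \eps^{-\kone}\langle\M_\eps(x)\rangle_t$ and observe that $J'_\eps(t,x) := c^2\eps^{\kone}N^x_t = \sum_{0\le s\le t}\Delta_s\Nm_\eps(x)$ is a pure jump process as required, while the remaining term is
\[
\eps^{-\kone}\langle\M_\eps(x)\rangle_t = \eps^{-\kone-d}\int_0^t\C_\eps(s,x)\,\d s,
\]
which is of the form $\eps^{-\kone-d}\int_0^t\Cd'_\eps(s,x)\,\d s$ with $\Cd'_\eps = \C_\eps$ progressively measurable and bounded by a non-random constant — i.e.\ \eqref{eq:martingales_dynamics} holds for $\Nm_\eps$ with $\Cd_\eps$ replaced by $\C_\eps$, as claimed. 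The main obstacle is the first step: correctly identifying $[\M_\eps(x)]_t$ as a pure-jump process with $\Delta_t[\M_\eps(x)] = c^2\eps^{2\kone}\1_{\{\Delta_t\M_\eps(x)\neq 0\}}$ and no continuous part — this requires knowing that $\M_\eps$ has no continuous martingale part (it is bounded variation, being a jump process minus an absolutely continuous drift by \eqref{eq:martingales_dynamics}) so that $[\M_\eps(x)]^c_t = 0$ and $[\M_\eps(x)]_t = \sum_{s\le t}(\Delta_s\M_\eps(x))^2$. Once that structural fact is in hand the rest is bookkeeping with the deterministic jump size.
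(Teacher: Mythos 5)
Your proof is correct and takes essentially the same route as the paper's (which simply states the two identities $[\M_\eps(x)]_t=\sum_{s\le t}(\Delta_s\M_\eps(x))^2$ and $\Nm_\eps(t,x)=\eps^{-\kone}\sum_{s\le t}(\Delta_s\M_\eps(x))^2-\eps^{-\kone-d}\int_0^t\C_\eps\,\d s$ and says the rest follows readily). You correctly pin down the key structural fact that $\M_\eps(x)$ has no continuous martingale part, identify $[\M_\eps(x)]_t=c^2\eps^{2\kone}N^x_t$, and then verify each item, including the compensator computation that gives $\langle\Nm_\eps(x)\rangle_t=c^2\eps^{-d}\int_0^t\C_\eps\,\d s$.

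One small caution on your square-integrability remark: you cannot invoke Lemma~\ref{lem:jumps} here, since it is proved \emph{after} this lemma and its proof relies on $\Nm_\eps$ satisfying Assumption~\ref{a:Martingales}; that would be circular. Your ``elementary'' alternative, as phrased, is also slightly off, since the inequality $[\M]_t\le\langle\M\rangle_t+(\text{martingale})$ does not by itself control moments. The clean non-circular argument is: $[\M_\eps(x)]-\langle\M_\eps(x)\rangle$ is a genuine martingale (by \cite[Prop.~I.4.50]{JS03}, since $\M_\eps$ is square-integrable), so $\E[N^x_t]=c^{-2}\eps^{-2\kone}\E[[\M_\eps(x)]_t]=c^{-2}\eps^{-2\kone}\E\langle\M_\eps(x)\rangle_t<\infty$; then $\E[[\Nm_\eps(x)]_t]=c^4\eps^{2\kone}\E[N^x_t]<\infty$, which yields square-integrability of the local martingale $\Nm_\eps(x)$.
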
 

\begin{proof}
Assumptions~\ref{a:Martingales}(\ref{it:bracket}) and (\ref{it:dynamics}) yield $[ \M_{\eps}(x) ]_t = \sum_{0 \leq s \leq t} (\Delta_s \M_{\eps}(x))^2$ and 
\begin{equation*}
\Nm_\eps(t, x) = \eps^{-\kone} \sum_{0 \leq s \leq t} (\Delta_s \M_{\eps}(x))^2 - \eps^{-\kone -d} \int_0^t \C_{\eps}(s, x) \d s.
\end{equation*}
This lemma follows readily from these two identities and properties of the martingales $\M_{\eps}(t,x)$.
\end{proof}

For $x \in \Le$ and for a bounded set $A \subset \R_+$, we define\label{lab:jumps_number}
\begin{equation}\label{eq:number-of-jumps}
	\nn^{\eps}_{A}(x) := \#\{t \in A : \Delta_t \M_{\eps}(x) \neq 0\}
\end{equation}
to be the number of jumps of the martingale $\M_{\eps}(x)$ in $A$. We are going to show that Assumption~\ref{a:Martingales} implies moment bounds for the number of jumps.

\begin{lemma}\label{lem:jumps}
For any $[a,b] \subset \R_+$ and any $p \geq 1$ the number of jumps satisfies
		\begin{equation}\label{eq:jumps_bound}
			\sup_{\eps \in (0,1]} \sup_{x \in \Le} \E_p | \nn^\eps_{[ \eps^{d + 2 \kone} a, \eps^{d + 2 \kone} b]} (x)| < \infty,
		\end{equation}
		locally uniformly in $a, b$.
\end{lemma}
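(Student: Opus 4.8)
The plan is to control the number of jumps by relating it to the quadratic variation, which is pinned down by Assumption~\ref{a:Martingales}. The starting observation is that, by Assumption~\ref{a:Martingales}\eqref{it:jumps2}, every nonzero jump has the fixed size $c\eps^{\kone}$, so from the identity $[\M_\eps(x)]_t = \sum_{0 \le s \le t}(\Delta_s \M_\eps(x))^2$ we get $[\M_\eps(x)]_t = c^2 \eps^{2\kone}\, \nn^\eps_{[0,t]}(x)$, i.e.
\[
	\nn^\eps_{[0,t]}(x) = c^{-2}\eps^{-2\kone}\,[\M_\eps(x)]_t.
\]
More generally, for an interval $[a',b']$ we have $\nn^\eps_{[a',b']}(x) = c^{-2}\eps^{-2\kone}\bigl([\M_\eps(x)]_{b'} - [\M_\eps(x)]_{a'}\bigr)$. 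Taking $[a',b'] = [\eps^{d+2\kone}a, \eps^{d+2\kone}b]$, it therefore suffices to bound the $L^p$ norm of the increment of $[\M_\eps(x)]$ over this interval, times $c^{-2}\eps^{-2\kone}$.

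Next I would split $[\M_\eps(x)]_t = \langle \M_\eps(x)\rangle_t + \bigl([\M_\eps(x)]_t - \langle\M_\eps(x)\rangle_t\bigr)$. The first term is explicit: by \eqref{eq:quadr_var_formula} and the bound $|\C_\eps(s,x)|\lesssim 1$, the increment of $\langle\M_\eps(x)\rangle$ over $[\eps^{d+2\kone}a,\eps^{d+2\kone}b]$ is at most $\eps^{-d}\cdot C(b-a)\eps^{d+2\kone} = C(b-a)\eps^{2\kone}$ almost surely, so after multiplying by $c^{-2}\eps^{-2\kone}$ this contributes a deterministic quantity $\lesssim (b-a)$, uniformly in $\eps$ and $x$ — exactly the right order. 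For the second, martingale, term I would use the martingale $\Nm_\eps$ from \eqref{eq:renorm-martingale}: indeed $c^{-2}\eps^{-2\kone}\bigl([\M_\eps(x)]_t-\langle\M_\eps(x)\rangle_t\bigr) = c^{-2}\eps^{-\kone}\Nm_\eps(t,x)$, and by Lemma~\ref{lem:martingales-N} the process $\Nm_\eps$ itself satisfies Assumption~\ref{a:Martingales} (with $c^2$ in place of $c$ and $\C_\eps$ in place of $\Cd_\eps$). I would then estimate the increment of $\Nm_\eps$ over the short interval by applying Proposition~\ref{prop:BDG} (the \BDG inequality) to the stopped/shifted martingale $\Nm_\eps(\eps^{d+2\kone}a + \cdot, x) - \Nm_\eps(\eps^{d+2\kone}a,x)$: this gives a bound in terms of $\E_p\bigl[\langle\Nm_\eps(x)\rangle^{p/2}\bigr]$ over the interval plus $\E_p\bigl[\sup|\Delta_t\Nm_\eps(x)|^p\bigr]$. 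The predictable bracket of $\Nm_\eps$ is, by Lemma~\ref{lem:martingales-N} and \eqref{eq:quadr_var_formula}, again $\eps^{-d}\int \C_\eps\,ds$, whose increment over the interval is $\lesssim (b-a)\eps^{2\kone}$ a.s.; the jumps of $\Nm_\eps$ have fixed size $c^2\eps^{\kone}$. Plugging in, the \BDG bound on $\E_p$ of the increment of $\Nm_\eps$ is $\lesssim \bigl((b-a)\eps^{2\kone}\bigr)^{1/2} + c^2\eps^{\kone} \lesssim \eps^{\kone}\bigl(\sqrt{b-a} + 1\bigr)$, so after multiplying by $c^{-2}\eps^{-\kone}$ we again get a bound $\lesssim \sqrt{b-a}+1$, uniform in $\eps\in(0,1]$ and $x\in\Le$, and locally uniform in $a,b$.

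Combining the two pieces, $\E_p|\nn^\eps_{[\eps^{d+2\kone}a,\eps^{d+2\kone}b]}(x)| \lesssim (b-a) + \sqrt{b-a} + 1$, with all implicit constants independent of $\eps$ and $x$ and depending on $a,b$ only through $b-a$; taking the supremum over $\eps$ and $x$ yields \eqref{eq:jumps_bound}. The only slightly delicate point is the application of \BDG to a martingale restarted at the deterministic time $\eps^{d+2\kone}a$: one must check that $\bigl(\Nm_\eps(\eps^{d+2\kone}a+t,x)-\Nm_\eps(\eps^{d+2\kone}a,x)\bigr)_{t\ge 0}$ is again a \cadlag square-integrable martingale for the shifted filtration, with predictable bracket and jumps given by the corresponding increments — this is standard, but it is where one has to be careful rather than in any substantial estimate. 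A harmless subtlety is the endpoint convention in \eqref{eq:number-of-jumps} versus the half-open $[a',b')$ that naturally appears from bracket increments; since a single time carries at most one jump of controlled size, the difference is absorbed into the constant.
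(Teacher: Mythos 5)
Your proposal is correct and follows essentially the same route as the paper: relate the jump count to $[\M_\eps(x)]$ via the fixed jump size, split $[\M_\eps(x)]$ into $\langle\M_\eps(x)\rangle$ plus the martingale $\Nm_\eps$, bound the first term deterministically via Assumption~\ref{a:Martingales}\eqref{it:bracket} and the second via BDG together with Lemma~\ref{lem:martingales-N}, and observe that the resulting bound scales correctly with the length of the time window. The only cosmetic difference is that the paper estimates $\nn^\eps_{[0,t]}(x)$ first and then substitutes $t=\eps^{d+2\kone}b$ (monotonicity of the jump count then gives the statement for $[\eps^{d+2\kone}a,\eps^{d+2\kone}b]$), whereas you handle the interval directly by restarting the martingale at the left endpoint; both are fine, and the restarting step you flag is indeed standard.
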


\begin{proof}
Assumptions~\ref{a:Martingales}(\ref{it:jumps2}) and (\ref{it:dynamics}) yield $[ \M_{\eps}(x) ]_t = \sum_{0 \leq s \leq t} (\Delta_s \M_{\eps}(x))^2 = c^2 \eps^{2 \kone} \nn^{\eps}_{[0, t]}(x)$. Then for any $p \geq 1$ we have $\E_p | \nn^\eps_{[0, t]} (x)| = c^{-2} \eps^{-2 \kone} \E_p [ \M_{\eps}(x) ]_t$. Using the martingales \eqref{eq:renorm-martingale} and applying Minkowski's inequality, we get furthermore 
\begin{equation*}
\E_p | \nn^\eps_{[0, t]} (x)| \leq c^{-2} \eps^{-2 \kone} \E_p \langle \M_{\eps}(x) \rangle_t + c^{-2} \eps^{-\kone} \E_p |\Nm_\eps(t, x)|.
\end{equation*}
The first term is bounded using Assumptions~\ref{a:Martingales}(\ref{it:bracket}) as $\eps^{-2 \kone} \E_p \langle \M_{\eps}(x) \rangle_t \lesssim \eps^{-d-2 \kone} t$, while to bound the second term we apply the \BDG inequality \eqref{eq:BDG}:
\[
\eps^{-\kone} \E_p |\Nm_\eps(t, x)| \lesssim \eps^{-\kone} \E_p \langle\Nm_\eps(x) \rangle_t^{1/2} + \eps^{-\kone} \E_p \sup_{s \in [0, t]} | \Delta_s \Nm_\eps(x) |,
\]
where the proportionality constant depends only on $p$. Lemma~\ref{lem:martingales-N} allows to bound the preceding expression by a constant multiple of $\eps^{-\kone - d/2} t^{1/2} + 1$. Hence, we have 
\begin{equation*}
\E_p | \nn^\eps_{[0, t]} (x)| \lesssim \eps^{-d-2 \kone} t + \eps^{-\kone - d/2} t^{1/2} + 1.
\end{equation*}
Since the proportionality constant in this bound is independent of $t$, we can replace $t$ by $\eps^{d + 2 \kone} t$ to get $\E_p | \nn^\eps_{[0, \eps^{d + 2 \kone} t]} (x)| \lesssim t + 1$ from which the required bound follows.
\end{proof}

\begin{lemma}\label{lem:TV}
Let $\Vert \bigcdot \Vert_{\TV([0,T])}$ be the total variation norm on the interval $[0, T]$. Then
\begin{equation*}
\sup_{\eps \in (0, 1]} \sup_{x \in \Le} \eps^{\kone + d}\, \E_p \Vert \M_{\eps}(x) \Vert_{\TV([0,T])}  < \infty
\end{equation*}
for any $T > 0$ and $p \geq 1$. 
\end{lemma}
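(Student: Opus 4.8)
The plan is to exploit the explicit dynamics in Assumption~\ref{a:Martingales}\eqref{it:dynamics}, which writes $\M_\eps(\cdot,x)$ as the sum of a pure jump process and an absolutely continuous drift, and to bound the total variation of each piece separately. First I would use subadditivity of the total variation norm to write
\[
\Vert \M_\eps(x) \Vert_{\TV([0,T])} \ls \Vert J_\eps(x) \Vert_{\TV([0,T])} + \eps^{-\kone - d} \Bigl\Vert \textstyle\int_0^{\bigcdot} \Cd_\eps(s,x)\,\d s \Bigr\Vert_{\TV([0,T])}.
\]
The drift term is the total variation of an absolutely continuous function, hence equals $\eps^{-\kone-d}\int_0^T |\Cd_\eps(s,x)|\,\d s$, which is $\ls \eps^{-\kone-d}T$ almost surely by the uniform bound $|\Cd_\eps|\ls 1$. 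For the jump term, since $J_\eps(\cdot,x)=\sum_{0\le s\le t}\Delta_s\M_\eps(x)$ is a pure jump process its total variation on $[0,T]$ is $\sum_{0\le s\le T}|\Delta_s\M_\eps(x)|$, and Assumption~\ref{a:Martingales}\eqref{it:jumps2} identifies this with $c\,\eps^{\kone}\,\nn^\eps_{[0,T]}(x)$, i.e. $c\,\eps^\kone$ times the number of jumps.

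Next I would take the stochastic $L^p$ norm of this pointwise estimate and apply Minkowski's inequality, which gives
\[
\E_p \Vert \M_\eps(x) \Vert_{\TV([0,T])} \ls \eps^{\kone}\, \E_p \bigl| \nn^\eps_{[0,T]}(x) \bigr| + \eps^{-\kone - d} T .
\]
The jump-count moment is controlled by Lemma~\ref{lem:jumps}. The only subtlety is that the statement of that lemma is phrased for the rescaled window $[\eps^{d+2\kone}a,\eps^{d+2\kone}b]$ with constants that are only \emph{locally uniform} in $a,b$, whereas here I need control on the fixed interval $[0,T]$, which corresponds to $b\sim\eps^{-d-2\kone}T\to\infty$ as $\eps\to0$. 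I would therefore not invoke Lemma~\ref{lem:jumps} as a black box but instead extract from its proof the intermediate inequality
\[
\E_p \bigl| \nn^\eps_{[0,t]}(x) \bigr| \ls \eps^{-d - 2\kone} t + \eps^{-\kone - d/2} t^{1/2} + 1,
\]
which holds with a proportionality constant independent of $t\ge0$, $\eps\in(0,1]$ and $x\in\Le$, and apply it at $t=T$.

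Finally, multiplying the displayed bound for $\E_p\Vert\M_\eps(x)\Vert_{\TV([0,T])}$ by $\eps^{\kone+d}$ turns the three jump contributions into $T$, $\eps^{\kone+d/2}T^{1/2}$, $\eps^{2\kone+d}$, and the drift contribution into $T$. Since $\kone>-\tfrac d2$ forces $\kone+\tfrac d2>0$ and $2\kone+d>0$, every power of $\eps$ appearing is nonnegative and hence bounded on $(0,1]$, so the whole expression is bounded by a constant depending only on $T$ and $p$, uniformly in $\eps\in(0,1]$ and $x\in\Le$; this is exactly the claim.

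I expect the only genuine obstacle to be this bookkeeping point around Lemma~\ref{lem:jumps}: one must use a version of the jump-count moment bound that is uniform over the large, $\eps$-dependent time horizon rather than the as-stated locally uniform form, which is why I would go back to the explicit estimate inside its proof. Once that is in place, the statement is a direct consequence of the structural Assumptions~\ref{a:Martingales}\eqref{it:jumps2} and \eqref{it:dynamics} together with the boundedness of $\Cd_\eps$.
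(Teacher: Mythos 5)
Your proof is correct and follows essentially the same route as the paper: split the total variation via the dynamics of Assumption~\ref{a:Martingales}\eqref{it:dynamics} into a jump part (of size $c\,\eps^{\kone}\nn^\eps_{[0,T]}(x)$) and a drift part (of size $\lesssim\eps^{-\kone-d}T$), then control the jump count. The one place where you went beyond the paper's wording is worth keeping: the paper simply says ``the required bound follows from Lemma~\ref{lem:jumps},'' but as you correctly observe the stated form of that lemma (locally uniform in $a,b$ over rescaled windows) is, taken literally, too weak for a fixed horizon $T$ with $b\sim\eps^{-d-2\kone}T\to\infty$; what is actually needed, and what you extract, is the intermediate inequality $\E_p|\nn^\eps_{[0,t]}(x)|\lesssim\eps^{-d-2\kone}t+\eps^{-\kone-d/2}t^{1/2}+1$ established with a $t$-independent constant inside its proof. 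Your final bookkeeping with $\kone>-d/2$ is also exactly right.
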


\begin{proof}
Assumption~\ref{a:Martingales}(\ref{it:dynamics}) yields 
\begin{equation*}
\Vert \M_{\eps}(x) \Vert_{\TV([0,T])} \leq \sum_{0 \leq s \leq T} |\Delta_s \M_{\eps}(x)| + \eps^{-\kone - d} \int_0^T |\Cd_\eps(s, x)| \d s \lesssim \eps^{\kone} \nn^{\eps}_{[0, T]}(x) + T \eps^{-\kone - d} 
\end{equation*}
a.s. uniformly in $x$, where we used the properties $|\Delta_s \M_{\eps}(x)| \lesssim \eps^{\kone}$ and $|\Cd_\eps(s, x)| \lesssim 1$. Then the required bound follows from Lemma~\ref{lem:jumps}.
\end{proof}

The next result shows that martingales satisfying Assumption~\ref{a:Martingales} weakly converge to a cylindrical Wiener process \cite{DPZ}.

\begin{lemma}\label{lem:Wiener}
Let martingales $(\M_{\eps}(t, x))_{t \geq 0}$, with $x \in \Le$ and either $\Le=(\eps \Z \slash \Z)^d$ or $\Le= \eps \Z^d$, satisfy Assumption~\ref{a:Martingales} (except possibly Assumption~\ref{a:Martingales}(\ref{it:dynamics})) and let $\M_{\eps}(0, x) = 0$. For every continuous, compactly supported function $\phi : \Lambda_0 \to \R$, for every fixed $t > 0$ and some constant $\sigma > 0$, let the following limit hold in distribution
\begin{equation}\label{eq:C_converges}
 \lim_{\eps \to 0}\int_0^t (\iota_\eps \C_{\eps})(s, \phi) \d s = \sigma t \int_{\Lambda_0} \phi(x) \d x.
\end{equation}
Then the martingales $(\M_{\eps}(t, x))_{t \geq 0}$ weakly converge in the Skorokhod topology $D(\R_+, \SD'(\Lambda_0))$ to a cylindrical Wiener process on $L^2(\Lambda_0)$ with variance $\sigma$. 
\end{lemma}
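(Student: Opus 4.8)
The plan is to establish convergence of finite-dimensional distributions together with tightness in the Skorokhod space, and then invoke the martingale characterisation of the cylindrical Wiener process via the L\'evy characterisation theorem adapted to the infinite-dimensional setting. More precisely, for a fixed continuous, compactly supported test function $\phi : \Lambda_0 \to \R$, I would consider the real-valued \cadlag~martingale $t \mapsto M^\eps_\phi(t) := (\iota_\eps \M_{\eps})(t, \phi) = \eps^d \sum_{x \in \Le} \M_{\eps}(t, x) \phi(x)$. The first step is to compute its predictable quadratic variation: using Assumption~\ref{a:Martingales}(\ref{it:bracket}), the cross-brackets between distinct sites vanish, so $\langle M^\eps_\phi \rangle_t = \eps^{2d} \sum_{x \in \Le} \phi(x)^2 \langle \M_{\eps}(x) \rangle_t = \eps^d \sum_{x \in \Le} \phi(x)^2 \int_0^t \C_{\eps}(s,x) \d s = \int_0^t (\iota_\eps \C_{\eps})(s, \phi^2) \d s$. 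By hypothesis \eqref{eq:C_converges} applied with the test function $\phi^2$ (note $\phi^2$ is again continuous and compactly supported), this converges in distribution to $\sigma t \int_{\Lambda_0} \phi(x)^2 \d x = \sigma t \|\phi\|_{L^2(\Lambda_0)}^2$, which is the quadratic variation of $\sqrt{\sigma}\,\langle W(\cdot), \phi \rangle$ for a cylindrical Wiener process $W$ with variance $\sigma$.

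The second step is to control the jumps: by Assumption~\ref{a:Martingales}(\ref{it:jumps1}) the martingales never jump simultaneously, so $\Delta_t M^\eps_\phi = \eps^d \phi(x) \Delta_t \M_{\eps}(x)$ for the unique site $x$ jumping at time $t$ (if any), and by Assumption~\ref{a:Martingales}(\ref{it:jumps2}) this has magnitude at most $c \|\phi\|_\infty \eps^{d + \kone}$; since $\kone > -d/2 > -d$, the maximal jump size tends to $0$ deterministically as $\eps \to 0$. Combined with the convergence of the predictable quadratic variation, this lets me apply a martingale central limit theorem (e.g.\ \cite[Thm.~VIII.3.11]{JS03} or the classical Ethier--Kurtz criterion): for each fixed $\phi$, $M^\eps_\phi$ converges in the Skorokhod topology $D(\R_+, \R)$ to a continuous Gaussian martingale with deterministic quadratic variation $\sigma t \|\phi\|_{L^2}^2$, i.e.\ a Brownian motion of that variance. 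To upgrade from one-dimensional projections to the full convergence as $D(\R_+, \SD'(\Lambda_0))$-valued processes, I would run the same argument for an arbitrary finite linear combination $\sum_j a_j M^\eps_{\phi_j} = M^\eps_{\sum_j a_j \phi_j}$, which by linearity of $\iota_\eps$ is again of the same form, obtaining joint convergence of all finite collections $(M^\eps_{\phi_1}, \ldots, M^\eps_{\phi_k})$ to a Gaussian family with the covariance structure $\sigma \langle \phi_i, \phi_j\rangle_{L^2}$ --- exactly the covariance of a cylindrical Wiener process with variance $\sigma$.

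The third step is tightness in $D(\R_+, \SD'(\Lambda_0))$. Here I would use Mitoma's criterion: a sequence of $\SD'(\Lambda_0)$-valued \cadlag~processes is tight provided $(M^\eps_\phi)_\eps$ is tight in $D(\R_+, \R)$ for every fixed $\phi$ (a nuclear-space argument, valid since test functions on the torus form a nuclear Fr\'echet space). Tightness of $(M^\eps_\phi)_\eps$ follows from the Aldous criterion or directly from the BDG bound of Proposition~\ref{prop:BDG} together with the uniform bound $\langle M^\eps_\phi \rangle_t = \int_0^t (\iota_\eps\C_\eps)(s,\phi^2)\d s \lesssim \|\phi^2\|_\infty \,\mathrm{Leb}(\Lambda_0)\, t$ coming from $|\C_\eps| \lesssim 1$, and the vanishing jump bound above: these give uniform moment estimates on increments $\E_p|M^\eps_\phi(t) - M^\eps_\phi(s)| \lesssim |t-s|^{1/2} + \eps^{d+\kone}$ which, via a Kolmogorov-type or Aldous-type argument, yield tightness and moreover force all limit points to have continuous paths. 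Finally, tightness plus convergence of finite-dimensional distributions of all linear functionals identifies the unique limit as the cylindrical Wiener process on $L^2(\Lambda_0)$ with variance $\sigma$.

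I expect the main obstacle to be the passage from one-dimensional (or finite-dimensional) Skorokhod convergence to convergence in the distribution-valued Skorokhod space $D(\R_+, \SD'(\Lambda_0))$: one must be careful that Mitoma-type criteria require tightness of the real projections \emph{plus} a uniform equicontinuity/nuclearity input, and that the Skorokhod topology on the distribution-valued path space is not simply the product of one-dimensional Skorokhod topologies. A clean way around this is to note that all the processes $M^\eps_\phi$ are uniformly (in $\eps$, on compacts, after the jumps vanish) close to continuous processes, so one can effectively work in $C(\R_+, \SD'(\Lambda_0))$ where the topology \emph{is} generated by the seminorms $\phi \mapsto \sup_{t \le T}|M^\eps_\phi(t)|$, reducing the identification of the limit to the finite-dimensional computation of Steps 1--2. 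The moment estimates needed for all of this are exactly the ones already assembled in Lemmas~\ref{lem:jumps} and \ref{lem:TV} and Proposition~\ref{prop:BDG}, so the analytic content is light; the bookkeeping of the topology is where care is required.
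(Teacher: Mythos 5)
Your proposal is correct and follows essentially the same route as the paper: project onto test functions via $\iota_\eps$, use BDG together with the deterministic jump bound $\lesssim \eps^{d+\kone}\|\phi\|_\infty$ and Aldous' criterion to get tightness of the real-valued projections with continuous limit points, invoke Mitoma to upgrade to tightness in $D(\R_+,\SD'(\Lambda_0))$, and identify the limit through the convergence of $\langle (\iota_\eps\M_\eps)(\phi)\rangle_t$ to $\sigma t\|\phi\|_{L^2}^2$ and the L\'evy characterisation. The only cosmetic difference is that you package tightness and identification into a single invocation of a martingale CLT (Jacod--Shiryaev VIII.3.11), whereas the paper first proves tightness, observes that limit points are continuous martingales, and then applies L\'evy's theorem; the inputs and the logical structure are the same.
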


\begin{proof}
Let us take a continuous and compactly supported function $\phi$ and consider the martingales $t \mapsto (\iota_\eps \M_{\eps})(t, \phi)$. To prove tightness of the laws of these martingales, we will use a version of the Aldous' criterion \cite[Corollary~16.11]{Billingsley} (see \cite{Aldous} for the original result).

By Assumption~\ref{a:Martingales}(\ref{it:jumps1}),(\ref{it:jumps2}), for any $T > 0$, the jumps of this martingale can be a.s. bounded by $\sup_{0 \leq t \leq T}|\Delta_t (\iota_\eps \M_{\eps})(\phi)| \lesssim \eps^{d + \kone} \|\phi\|_{L^\infty}$, which vanishes as $\eps \to 0$. Furthermore, for fixed $t \geq 0$ the process $\delta \mapsto \M_{\eps}(t + \delta, x) - \M_{\eps}(t, x)$ is a martingale with respect to the filtration $(\F_{t + \delta})_{\delta \geq 0}$ with the predictable quadratic covariation $\eps^{-d} \int_{t}^{t+\delta} \C_{\eps}(s, x) \d s$. Hence, for any stopping time $\tau \in [0, T]$ and for any $\delta \in (0,1]$ we apply the \BDG inequality \eqref{eq:BDG} to get
\begin{align*}
&\E  | (\iota_\eps \M_{\eps})(\tau + \delta, \phi) - (\iota_\eps \M_{\eps})(\tau, \phi)| \leq \E \Bigl[ \sup_{0 \leq t \leq T} \E \bigl[  | (\iota_\eps \M_{\eps})(t + \delta, \phi) - (\iota_\eps \M_{\eps})(t, \phi)| \big| \F_t \bigr]\Bigr] \\
&\qquad \lesssim \E \Bigl[ \sup_{0 \leq t \leq T} \E \Bigl[ \int_t^{t + \delta} (\iota_\eps \C_{\eps})(s, \phi^2) \d s \Big| \F_t \Bigr]^{\frac{1}{2}} \Bigr] + \E \Bigl[ \sup_{0 \leq t \leq T + \delta} \E \bigl[ |\Delta_t (\iota_\eps \M_{\eps})(\phi)| \big| \F_t \bigr] \Bigr].
\end{align*}
Using Assumption~\ref{a:Martingales}(\ref{it:bracket}), the first term is bounded by a constant proportional to $\delta^{1/2}$, while the second term is bounded by a constant proportional to $\eps^{d + \kone}$. Hence, for any $\alpha > 0$ the Markov inequality yields 
\begin{equation*}
\P \bigl( | (\iota_\eps \M_{\eps})(\tau + \delta, \phi) - (\iota_\eps \M_{\eps})(\tau, \phi)| \geq \alpha \bigr) \leq C \frac{1}{\alpha} (\delta^{1/2} + \eps^{d + \kone}),
\end{equation*}
and the assumptions of \cite[Corollary~16.11]{Billingsley} are satisfied. This gives tightness of the stochastic processes $t \mapsto (\iota_\eps \M_{\eps})(t, \phi)$ in $D([0,T], \R)$, and moreover every limiting point is in $\SC([0,T], \R)$. From \cite[Corollary~IX.1.19]{JS03} we conclude that every limiting point is a martingale. Finally, \cite{Mitoma} yields tightness of $t \mapsto \iota_\eps \M_{\eps}(t)$ in $D([0,T], \SD'(\Lambda_0))$. Convergence \eqref{eq:C_converges} and Assumption~\ref{a:Martingales}(\ref{it:bracket}) imply the limit in distribution
\begin{equation*}
 \lim_{\eps \to 0} \langle (\iota_\eps \M_{\eps})(\phi)\rangle_t = \sigma t \|\phi\|_{L^2}^2,
\end{equation*}
combining which with the L\'{e}vy characterization theorem we conclude that the limit of $\M_{\eps}$ in the Skorokhod topology $D([0,T], \SD'(\Lambda_0))$ is a cylindrical Wiener process with variance $\sigma$.
\end{proof}

\subsection{Iterated integrals with respect to martingales}
\label{sec:iterated-integrals}

Let $\bM_{\eps}$ be the random measure (recall that the paths of $\M_\eps$ are almost surely of bounded variation) over $\CD_\eps$ such that, for any $F: \CD_{\eps} \to \R$ which is continuous in the time variable,
\begin{equation}\label{eq:one-fold_integral}
	\int_{\CD_{\eps}} F(z) \, \d \bM_{\eps}(z) = \sum_{x \in \Le} \eps^d \int_{s = 0}^\infty F(s,x) \, \d \M_{\eps}(s, x).
\end{equation}
Atoms of $\bM_{\eps}$ correspond to jumps of the martingales; in fact, by Assumption~\ref{a:Martingales}, the magnitude of the jumps of the martingales is deterministic in absolute value and equal to $c\eps^{\kone}$; as such, given that different martingales never jump simultaneously, the absolute value of atoms of $\bM_{\eps}$ is always equal to $c \eps^{d+\kone}$ and this quantity---see again Assumption~\ref{a:Martingales}\eqref{it:jumps2}---goes to zero as $\eps \to 0$. 

Let $n \in \N$ and let $\bM_{\eps}^{ n}$ be the product measure on $\CD_\eps^{ n}$.  
We want to analyse integrals of the form 
\begin{equation}\label{eq:integral}
	\int_{\CD_{\eps}^{ n}} F \, \d \bM_{\eps}^{ n} := \int_{\CD_{\eps}^{n}} F ( z_1, \ldots, z_n ) \prod_{i=1}^n \d \bM_{\eps} (z_i).
\end{equation}
Here and throughout this section $F: \CD_{\eps}^{n}  \to \R$ is a function of $n$ space-time variables, which is continuous in all time variables. 
\smallskip

\subsubsection{Contractions and orderings}
\label{sec:contractions}

The following is motivated by the analysis of $n$-fold iterated integrals against space-time white noise, which are conventionally defined as limits of Riemann sums that cut out diagonals (see e.g.  \cite[Section~1.1.2]{nualart2006malliavin}, \cite[Section 9]{kuo2005introduction}, or \cite[Appendix A]{chandraweberlecturenotes}):
we call a \emph{contraction} on $\set{n}$ any equivalence relation on $\set{n}$, and its equivalent classes are called \emph{components}. We use the symbol $\fC(n)$\label{lab:contractions} to denote the set of contractions and the symbol $\fC_m(n)$ to denote contractions on $\set{n}$ with $m$ components.  For $a, b \in \set{n}$ and $\ga \in \fC(n)$, we use the notation $a \sim_\ga b$ to indicate that $a$ and $b$ belong to the same component, and we denote by $[a]_\ga $ the component containing $a$. 

For $\ga \in \fC(n)$ we define\label{lab:E_gamma} 
\[
	\widetilde{\CD}_\ga := \Big\{ (z_1, \ldots, z_n) \in \CD_\eps^{n} : \, z_i = z_j \text{ if and only if } i \sim_\ga j \Big\}.
\]
The sets $\widetilde{\CD}_\ga$ form a partition of $\CD^{ n}_\eps = \bigsqcup_{\ga \in \fC(n)} \widetilde{\CD}_\ga$,
so that we can write the integral in \eqref{eq:integral} as
\begin{equation}\label{eq:expansion1}
	\int_{\CD_\eps^{ n}} F \, \d \bM^{ n}_{\eps} = \sum_{\ga \in \fC(n)} \int_{\widetilde{\CD}_\ga} F \, \d \bM^{n}_{\eps}.
\end{equation}

The next lemma shows that  under the measure $\bM^{n}_\eps$  we can disregard all the points in $\CD^{ n}_\eps$ which have different space component but  the same time component. 
To this end, for $i \neq j \in \set{n}$ and for $T>0$ we define
\begin{equation*}
	\CC_{i,j}(T) := \big\{ (z_1, \ldots, z_n) \in \CD^{ n}_\eps: \, z_i = (s_i, x_i), z_j = (s_j, x_j) \text{ with } s_i = s_j  \in [0,T), \, x_i \neq x_j \big\}, 
\end{equation*}
as well as 
\begin{equation*}
	\CC:=  \bigcup_{T \in \N}  \bigcup_{i \neq j \in \set{n}} \CC_{i,j}(T).
\end{equation*}

\begin{lemma}\label{lem:nullMeasureLemma}
	Let $n \in \N$, $n \geq 2$. Then  $\bM^{ n}_{\eps}(\CC) = 0$ almost surely.
\end{lemma}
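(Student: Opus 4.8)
The plan is to exploit that, in its atomic part, $\bM_\eps$ is carried by the jump times of the $\M_\eps(\cdot,x)$, together with the non-simultaneity hypothesis Assumption~\ref{a:Martingales}\eqref{it:jumps1}. Since $\CC=\bigcup_{T\in\N}\bigcup_{i\neq j\in\set{n}}\CC_{i,j}(T)$ is a countable union, it suffices to prove $\bM^n_\eps(\CC_{i,j}(T))=0$ a.s.\ for each fixed pair $i\neq j$ and each $T$. Moreover $\CC_{i,j}(T)$ is the increasing union over $N\geq T$ of its subsets on which all $n$ time coordinates are $\leq N$, so by continuity from below it is enough to treat each such truncated set. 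On it, all the one-dimensional marginals of the total variation measure $|\bM_\eps|$ that appear are finite a.s.\ --- by Lemma~\ref{lem:TV}, or simply because the paths of $\M_\eps$ have bounded variation and $\Le$ is finite --- hence $|\bM_\eps|^{\otimes n}$ restricts there to a finite measure and Fubini's theorem may be applied freely.

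The next step is to integrate out the $n-2$ coordinates with index $\neq i,j$: since $\1_{\CC_{i,j}(T)}$ depends only on $(z_i,z_j)$, the mass of the truncated set equals $|\bM_\eps|([0,N]\times\Le)^{\,n-2}$ times the $|\bM_\eps|\otimes|\bM_\eps|$-mass of the set $\{((s,x),(s,y))\in\CD_\eps^{\,2}:s\in[0,T),\ x\neq y\}$. Decomposing $|\bM_\eps|=\sum_{x\in\Le}\eps^d\mu_x$, where $\mu_x$ denotes the total variation measure of the path $s\mapsto\M_\eps(s,x)$, this mass becomes the finite sum $\sum_{x\neq y}\eps^{2d}(\mu_x\otimes\mu_y)\big(\{(s,s')\in[0,T)^2:s=s'\}\big)$, and a further application of Fubini identifies each term with $\eps^{2d}\int_{[0,T)}\mu_x(\{s\})\,\mu_y(\d s)$.

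The crux --- and the only place the probabilistic assumptions are used --- is that $\mu_x(\{s\})=|\Delta_s\M_\eps(x)|$ is nonzero only at jump times of $\M_\eps(x)$, so the integrand is a countable sum of point masses supported on the random countable set $S_x$ of jump times of $\M_\eps(x)$, and therefore
\[
\int_{[0,T)}\mu_x(\{s\})\,\mu_y(\d s)=\sum_{s\in S_x\cap[0,T)}|\Delta_s\M_\eps(x)|\,|\Delta_s\M_\eps(y)|.
\]
On the almost sure event provided by Assumption~\ref{a:Martingales}\eqref{it:jumps1}, namely that $\Delta_s\M_\eps(x)\,\Delta_s\M_\eps(y)=0$ for all $x\neq y$ and all $s\in[0,T]$, every summand vanishes, so the sum is zero; note that only the non-simultaneity of jumps is needed here, not their deterministic size. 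Tracing this back through the two reductions shows that the truncated set, hence $\CC_{i,j}(T)$, hence $\CC$, is $\bM^n_\eps$-null almost surely.

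The one point requiring a little care is the measure-theoretic bookkeeping: one must make sure that $|\bM_\eps|$, which is only $\sigma$-finite in time (finite on $[0,N]\times\Le$ but in general not on $\R_+\times\Le$), is handled via the truncation so that every interchange of integration is legitimate --- this is precisely the role of Lemma~\ref{lem:TV} and the finiteness of $\Le$. Conceptually there is no obstacle: once the atomic decomposition of $|\bM_\eps|$ is in place, the statement is just the observation that a product measure cannot charge ``equal time, distinct space'' because distinct martingales a.s.\ never jump together.
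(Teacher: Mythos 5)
Your proof is correct and follows essentially the same route as the paper: reduce to a fixed pair $i\neq j$ and a fixed $T$, factor out the $n-2$ unconstrained coordinates, and observe that the product measure cannot charge the set where $z_i,z_j$ share a time but differ in space, because the atomic part of $\bM_\eps$ lives on jump times and Assumption~\ref{a:Martingales}\eqref{it:jumps1} forbids simultaneous jumps. You are somewhat more explicit than the paper about the measure-theoretic bookkeeping (truncation to $[0,N]$ for $\sigma$-finiteness, the atomic decomposition of $|\bM_\eps|$, Fubini/Tonelli), but the underlying idea and the role of the non-simultaneity assumption are identical.
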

\begin{proof}
It suffices to show that for all $T>0$ and all  $i \neq j  \in \set{n}$ we have $\bM_\eps^{n} (\CC_{i,j}(T)) = 0$.  We have
\[
\bM^{ n}_{\eps} \big( \CC_{i,j}(T) \big)  = \bM_\eps^{ 2}  \Big( \Big\{ \big( (s, x_1), (s, x_2) \big): \, x_1 \neq x_2 \text{ and } s \in [0, T) \Big\} \Big) \bM^{ n-2}_{\eps} \big( \CD^{ n-2}_{\eps,T} \big).
\]
The quantity $\bM^{ n-2}_{\eps} \big( \CD^{n-2}_{\eps,T} \big) $ is almost surely finite. Recall that the measure $\bM^{ n}_{\eps}$ is defined in terms of the martingales $\M_\eps$, see \eqref{eq:one-fold_integral}, and that the martingales $\M_{\eps}$ are given by a sum of a jump part and an absolutely continuous part, see Assumption~\ref{a:Martingales}\eqref{it:dynamics}. The absolutely continuous part does not contribute to the diagonal considered here, and  we get
	\begin{align*}
\bM^{ 2}_{\eps} \Big( \Big\{ \big( (s, x_1), (s, x_2) \big): \, x_1 \neq x_2 \text{ and } s \in [0, T) \Big\} \Big)  
		\leq  \sum_{\substack{x_1, x_2 \in \Le \\ x_1 \neq x_2}} \eps^{2d}  \sum_{0 \leq s < T} \big|  \Delta_s \M_{\eps}(x_1) \Delta_s \M_{\eps}(x_2)  \big|.
	\end{align*}
	 By Assumption~\ref{a:Martingales}\eqref{it:jumps1}  the last expression is $0$ almost surely. 
\end{proof}

Combining \eqref{eq:expansion1} with Lemma \ref{lem:nullMeasureLemma}, we  obtain
\begin{equation}\label{eq:expansion2}
	\int_{\CD_\eps^{ n}} F \, \d \bM^{ n}_{\eps} = \sum_{\ga \in \fC(n)} \int_{\CD_\ga} F \, \d \bM^{ n}_{\eps},
\end{equation}
where $\CD_{\ga} := \widetilde{\CD}_\ga \setminus \CC$.\
\medskip

Lemma~\ref{lem:nullMeasureLemma} ensures that we can decompose the integral further according to the order between the different components with respect to the time argument. 
For a given $\ga \in \fC_m(n)$, we denote by  $\Sigma_\ga$  the set of bijections from $\set{m}$ to the  components of $\gamma$. We interpret $\sigma \in \Sigma_\ga$ as an ordering of the 
components and write  $[i]_\ga \prec_\sigma [j]_\ga$ if $\sigma^{-1} ([i]_\gamma) < \sigma^{-1}([j]_\gamma)$. 
Given an ordering $\sigma \in \Sigma_\ga$ over the components of a contraction $\ga \in \fC(n)$, we define the sets
\[
	\CD_{\ga, \sigma} := \Big\{ ( (s_1,x_1), \ldots, (s_n,x_n) ) \in \CD_\ga:\, \; s_i < s_j \text{ whenever } [i]_\ga \prec_\sigma [j]_\ga \Big\}.
\]
Now \eqref{eq:expansion2} becomes
\begin{equation}\label{eq:expansion}
	\int_{\CD_\eps^{ n}} F \, \d \bM^{ n}_{\eps} = \sum_{\ga \in \fC(n)} \sum_{\sigma \in \Sigma_\ga} \int_{\CD_{\ga, \sigma}} F \, \d \bM^{ n}_{\eps}.
\end{equation}

\subsubsection{A recursive representation}
\label{sec:representation}

We introduce the notation 
\begin{equation}\label{eq:integral_contraction}
	( \CI^\eps_{\ga, \sigma} F )_t := \int_{\CD_{\ga, \sigma} \cap \CD^{ n}_{\eps, t}} F \, \d \bM^{ n}_{\eps}.
\end{equation}
The aim of the next sections is to derive an estimate on moments of $( \CI^\eps_{\ga, \sigma} F )_t$ and this will be done recursively. To this end we introduce some more notation:
 first, for  given $\ga \in \fC_m(n)$ and $\sigma \in \Sigma_\ga$ we define the function $F^{\ga,\sigma} \colon \CD_\eps^m \to \R$ by
\begin{equation}\label{eq:F_gamma}
	F^{\ga,\sigma} \big(z_1, \ldots, z_m) := F(\bar{z}_1, \ldots , \bar{z}_n), 
\end{equation}
where $\bar{z} = (\bar{z}_1, \ldots, \bar{z}_n) \in \CD_\eps^{ n}$ is defined by 
\[
\bar{z}_i = z_j \qquad \Longleftrightarrow  \qquad  i  \text{ is a member of the $j$-th equivalence class according to $\sigma$}.
\]
Furthermore, for any $n$ we define the measure $\Md{n}$ on $\CD_\eps$ as 
\begin{align}\label{def:Mn}
 \Md{n} = \imath^{*} \bM_\eps^{ n} \lfloor_{\CD^{n}_{\eps, \mathrm{diag}}},
\end{align}
where  $\CD^{n}_{\eps, \mathrm{diag}}$ denotes the \emph{full diagonal} $\CD^{n}_{\eps, \mathrm{diag}} := \{ (z_1, z_2, \ldots , z_n) \in \CD_\eps^{ n} \colon z_1 = z_2 = \cdots  = z_n \}$, $\bM_\eps^{ n} \lfloor_{\CD^{n}_{\eps, \mathrm{diag}}}$ is the restriction of the product measure $\bM_\eps^{ n}$ to $\CD^{n}_{\eps, \mathrm{diag}}$ and $\imath^{*}$ denotes the image measure under the identification $ \imath \colon \CD^{n}_{\eps, \mathrm{diag}}  \to \CD_\eps$ given by $\imath(z, z, \ldots, z) = z$.
With this notations in place, the following recursive formula follows immediately.

\begin{lemma}\label{lem:recForm}
For any $F \in \CD_{\eps}^{ n}  \to \R$ which is continuous in all time variables, for any contraction $\ga \in \fC_m(n)$ and any ordering $\sigma \in \Sigma_\ga$ we have
\begin{equation}\label{eq:recForm} 
	\big( \CI^\eps_{\gamma,\sigma} F \big)_t = \int_ {s_m < t}  \int_{s_{m-1} < s_m} \; \cdots \; \int_{s_1 < s_2} F^{\gamma, \sigma} (z_1, \ldots, z_m)   
	\,\d  \Md{|\gamma_{\sigma(1)}|}(z_1) \cdots \d \Md{|\gamma_{\sigma(m)}|}(z_m).
\end{equation}
The subscript $s_m < t$ in the first integral of \eqref{eq:recForm} is used as a shorthand for $\{z_m =  (s_m, x_m) \in \CD_\eps \colon s_m < t \}$, and similarly the subscripts  $s_i < s_{i+1}$ mean that the corresponding integrals are taken over  $\{ z_i = (s_i, x_i) \in \CD_\eps \colon s_i < s_{i+1}\}$.
\end{lemma}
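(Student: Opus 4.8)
The plan is to reduce \eqref{eq:recForm} to a pathwise application of Fubini's theorem for the random measure $\bM_\eps^{n}$, combined with the definitions \eqref{eq:F_gamma} of $F^{\ga,\si}$ and \eqref{def:Mn} of the diagonal measures $\Md{\ell}$.

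First I would record that the integral in \eqref{eq:integral_contraction} is almost surely absolutely convergent. On the truncated domain $\CD^{n}_{\eps,t}$ every time coordinate lies in the bounded interval $[0,t)$, where $F$ is continuous and the spatial variables run over the finite set $\Le^{n}$; moreover each path of $\M_\eps(x)$ has finite total variation on $[0,t]$ by Assumption~\ref{a:Martingales}\eqref{it:dynamics} and Lemma~\ref{lem:jumps} (quantitatively, Lemma~\ref{lem:TV}). Hence the restriction of $\bM_\eps$ to $[0,t)\times\Le$ is a random finite signed measure, its $n$-fold product is a finite signed measure on $\CD^{n}_{\eps,t}$, and Fubini's theorem applies pathwise to $F$ against it.

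Next I would exploit the structure of $\CD_{\ga,\si}$. Since $\CD_{\ga,\si}\subset\widetilde{\CD}_\ga$, on this set $z_i=z_j$ whenever $i\sim_\ga j$, so there $F$ depends only on the $m$ common values of the components and coincides with the function $F^{\ga,\si}$ of \eqref{eq:F_gamma}; moreover $\si$ linearly orders the components and $\CD_{\ga,\si}$ imposes $s_i<s_j$ whenever $[i]_\ga\prec_\si[j]_\ga$, so the $m$ component-times are strictly increasing along $\si$ and, in particular, variables in distinct components automatically sit at distinct points. Thus the defining constraints of $\CD_{\ga,\si}\cap\CD^{n}_{\eps,t}$ reduce to: each of the $m$ blocks $\ga_{\si(1)},\dots,\ga_{\si(m)}$ lies on its full diagonal and the common times satisfy $s_1<\dots<s_m<t$. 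I would then integrate out the variables block by block starting from the $\si$-minimal block: fixing the variables of all later blocks, the inner integral runs over the $|\ga_{\si(1)}|$ variables of the first block against $\bM_\eps^{|\ga_{\si(1)}|}$ on $\CD^{|\ga_{\si(1)}|}_{\eps,\mathrm{diag}}\cap\{s_1<s_2\}$, and its integrand is a function of the single common value $z_1$ of that block (and of the fixed outer variables).

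Finally I would collapse each block integral using \eqref{def:Mn}: integrating a bounded measurable function against $\Md{\ell}$ is, by definition of the pushforward under $\imath$, the same as integrating its pullback along $\imath$ against $\bM_\eps^{\ell}$ restricted to $\CD^{\ell}_{\eps,\mathrm{diag}}$. Since on the block diagonal both the constraint $\{s_1<s_2\}$ and all the arguments of $F^{\ga,\si}$ are functions of $z_1$, the inner integral becomes $\int_{\{s_1<s_2\}}F^{\ga,\si}(z_1,z_2,\dots,z_m)\,\d\Md{|\ga_{\si(1)}|}(z_1)$; iterating over the remaining blocks $\ga_{\si(2)},\dots,\ga_{\si(m)}$ (the outermost being constrained only by $s_m<t$) reproduces exactly the right-hand side of \eqref{eq:recForm}. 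The only point requiring care is the combinatorial bookkeeping — matching each block of integration variables with the set $\ga_{\si(k)}$, the measure $\Md{|\ga_{\si(k)}|}$ and the nested time slab — together with the pathwise Fubini justification for the signed, but finite-variation, product measure $\bM_\eps^{n}$; beyond this there is no analytic difficulty, which is why the formula is asserted to follow immediately.
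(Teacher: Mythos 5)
Your proof is correct and supplies exactly the verifications that the paper leaves implicit when it asserts that the formula ``follows immediately'' from the definitions: a pathwise Fubini argument for the finite-variation (hence finite signed) random measure $\bM_\eps^{\,n}$ on the bounded time window, the observation that on $\CD_{\ga,\sigma}\cap\CD^{n}_{\eps,t}$ the constraints collapse to ``each $\sigma$-block lies on its full diagonal and the $m$ block times are strictly increasing up to $t$'' so that $F$ restricted there agrees with $F^{\ga,\sigma}$, and the pushforward identity built into \eqref{def:Mn} that turns each block integral against $\bM_\eps^{|\gamma_{\sigma(j)}|}\!\restriction_{\CD^{|\gamma_{\sigma(j)}|}_{\eps,\mathrm{diag}}}$ into an integral against $\Md{|\gamma_{\sigma(j)}|}$. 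This is the same unpacking the paper intends, so there is no gap; the only optional polish would be a line noting that continuity of $F$ in each time variable together with boundedness (the spatial lattice is finite and the time window compact) makes every nested single-block integral measurable and finite, which is what licenses the iteration.
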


\subsection{Analysis of the measure on the diagonals}

We analyse further the measures $\Md{n}$ defined in \eqref{def:Mn}. By definition, we have for $F \colon \CD_\eps \to \R$
\begin{equation*}
\int_{\CD_\eps}  F  \,\d \Md{n} = \int_{\CD^{n}_{\eps, \mathrm{diag}}} F(z_1)  \,  \d \bM_\eps^{ n}(z_1, z_2, \ldots, z_n). 
\end{equation*}
In this formula we may allow $F$ to be random which does not affect our computations. 
For $n\geq 2$ Assumption~\ref{a:Martingales}\eqref{it:dynamics}, equation \eqref{eq:martingales_dynamics}, implies that that only the jump parts of the martingales produce  non-trivial contributions. We get that 
\begin{align}
	\int_{s<t} F(s,x) \,  \d \Md{n}  (s,x) &= \sum_{x \in \Le} \eps^{nd} \sum_{0 \leq s < t}  F ( s, x ) \big( \Delta_{s} \M_{\eps}(x) \big)^{n} 
	\label{eq:Integral_gamma_bound_equality}
\end{align} 

In the case $n=2$ we use the bracket processes of the martingales (see Section~\ref{sec:Appendix2}) to write \eqref{eq:Integral_gamma_bound_equality} as
\begin{align}
\notag
	&\int_{s<t} F(s,x)\, \d \Md{2}  (s,x)
	 = \sum_{x \in \Le} \eps^{2d} \int_{s<t}  F ( s, x )\, \d [ \M_{\eps}(x) ]_s  \\
	 \notag
	&\qquad = \sum_{x \in \Le} \eps^{2d}  \int_{s<t}  F ( s, x )\, \d  \langle \M_{\eps}(x) \rangle_s  + \sum_{x \in \Le} \eps^{2d} \int_{s<t}  F ( s, x )\, \d \bigl([ \M_{\eps}(x) ]_s - \langle \M_{\eps}(x) \rangle_s \bigr)\\
	&\qquad = \sum_{x \in \Le} \eps^{d} \int_{s<t}  F ( s, x ) \C_{\eps}(s, x)\, \d s  + \sum_{x \in \Le} \eps^{2 d + \kone} \int_{s<t}  F ( s, x )  \, \d \Nm_{\eps}(s,x),
	\label{eq:Integral_gamma_bound_equality_n=2}
\end{align} 
where in the last equality we have used Assumption \ref{a:Martingales}\eqref{it:bracket} in the first term and the definition of the martingales \eqref{eq:renorm-martingale} in the second one. 

To bound \eqref{eq:Integral_gamma_bound_equality} for $n \geq 3$ we make crucial use of the Assumption~\ref{a:Martingales}\eqref{it:jumps2} that guarantees that the jumps $| \Delta_t \M( x)|$ are of fixed size $c \eps^{\kone}$. Then for $n$ odd we get
\begin{align}
\notag
	&\int_{s<t} F(s,x) \,  \d \Md{n}  (s,x) = \sum_{x \in \Le} \eps^{nd} \sum_{0 \leq s < t}  F ( s, x ) (c \eps^{\kone} )^{n-1} \Delta_{s}    \M_{\eps}(x) \\
	\notag
	&\qquad = c^{n-1} \eps^{(d + \kone) (n-1)} \sum_{x \in \Le} \eps^{d} \int_{0 \leq s < t}  F ( s, x )\,  \d  \M_{\eps}(s, x) \\
	\label{eq:oddContr}
	& \qquad\qquad \qquad  - c^{n-1} \eps^{(d + \kone) (n-2)} \sum_{x \in \Le} \eps^{d} \int_{0 \leq s < t}  F ( s, x ) \Cd_\eps( s, x )\, \d  s,
\end{align} 
where in the last identity we made use of Assumption~\ref{a:Martingales}\eqref{it:dynamics}. Similarly, when $n$ is even we get 
\begin{align}
\notag
	&\int_{s<t} F(s,x) \,  \d \Md{n}  (s,x) =  (c \eps^{\kone} )^{n-2} \sum_{x \in \Le} \eps^{nd}  \sum_{0 \leq s < t}  F ( s, x )\, \d [  \M_{\eps}(x)]_s \\
	\notag
	&\qquad = c^{n-2} \eps^{(d + \kone) (n-1)} \sum_{x \in \Le} \eps^{d}  \int_{0 \leq s<t}  F ( s, x )  \, \d \Nm_{\eps}(s, x) \\
	\label{eq:evenContr}
	&\qquad \qquad \qquad  + c^{n-2} \eps^{(d + \kone) (n-2)} \sum_{x \in \Le} \eps^{d}  \int_{0 \leq s<t}  F ( s, x ) \C_{\eps}(s, x)\, \d s,
\end{align} 
where we made use of the martingale \eqref{eq:renorm-martingale} and Assumption~\ref{a:Martingales}\eqref{it:bracket}. Remarkably, the equations \eqref{eq:oddContr} and \eqref{eq:evenContr} are of exactly the same structure and consequently, even and odd contractions can be bounded in the same way. 

\section{Moment bounds for iterated integrals}
\label{sec:moment-bounds-simple}

We aim to estimate integrals \eqref{eq:recForm}. This is done recursively and here we perform the recursive step by deriving an estimate on 
\begin{equation}\label{eq:integral-to-be-bounded}
\int_{\CD_\eps} G_t(t,x)\, \d \Md{n_0}(t,x) \;, 
\end{equation}
where 
\begin{equation}\label{eq:form-for-G}
G_t(s,x) = \int_ {s_m < t}  \int_{s_{m-1} < s_m} \; \cdots \; \int_{s_1 < s_2} F (s,x; z_1, \ldots, z_m) \;\d  \Md{n_1}(z_1) \cdots \d   \Md{n_m}(z_m) \;
\end{equation}
and  $n_0, n_1, \ldots, n_m \geq 1$.
Throughout this section we make the assumption that $F: \CD_{\eps}^{m+1}  \to \R$ is a deterministic function that is $\SC^1$ in each time variable. We note that the domain of integration in \eqref{eq:form-for-G} guarantees that for any fixed $s$ the function $t \mapsto G_t(s,x)$ is predictable.
 In the following  Section \ref{sec:OneFoldBounds} we bound the simple integral \eqref{eq:integral-to-be-bounded} in terms of $G$ and $F$. The resulting estimate is then used in a recursive argument 
 to bound the full iterated integral $\CI^\eps_{\gamma,\sigma} F$ in the subsequent Section \ref{sec:GeneralBounds}.
 
 To control the function $F$ we will use the following norm 
 \begin{equation}\label{eq:norm-for-F}
 \Vert F \Vert_{\SC_s^1(L^\infty_\eps)} := \Vert F \Vert_{L^\infty_\eps} + \Vert  \partial_s F \Vert_{L^\infty_\eps},
 \end{equation}
 where the subscript $s$ refers to the variable with respect to which the $\SC^1$-norm is computed. 

\subsection{Simple integrals}
\label{sec:OneFoldBounds}

We will need the following result.

\begin{lemma}\label{lem:Sobolev}
	Let $f : I \to \R$ be a $\SC^1$ function on a interval $I \subset \R$ with length $|I| > 0$. Then for any $p \geq 1$ 
	\begin{equation}\label{eq:SobTypeIneq}
		\sup_{t \in I} | f(t) |^p \leq  \frac{1}{|I|}\int_{I} | f(r) |^p \d r + p \biggl( \int_{I} | f(r) |^p \d r \biggr)^{\frac{p-1}{p}} \biggl( \int_{I} |  f'(r) |^p \d r \biggr)^{\frac{1}{p}}.
	\end{equation}
\end{lemma}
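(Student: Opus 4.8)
The plan is to reduce the statement to the fundamental theorem of calculus applied to $|f|^p$. First I would observe that by the mean value theorem for integrals there exists a point $t_0 \in I$ at which $|f(t_0)|^p \leq \frac{1}{|I|}\int_I |f(r)|^p\,\d r$, i.e. $|f(t_0)|$ does not exceed the $L^p(I)$-average of $|f|$. This takes care of the first term on the right-hand side of \eqref{eq:SobTypeIneq}. Then, for an arbitrary $t \in I$, I would write
\[
  |f(t)|^p = |f(t_0)|^p + \int_{t_0}^{t} \frac{\d}{\d r}|f(r)|^p \, \d r,
\]
where the integrand is interpreted appropriately; since $f \in \SC^1$, the function $r \mapsto |f(r)|^p$ is absolutely continuous (it is $\SC^1$ away from the zero set of $f$, and Lipschitz near it when $p \geq 1$), so this identity is valid.

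Next I would bound the derivative term. Away from zeros of $f$ we have $\big|\frac{\d}{\d r}|f(r)|^p\big| = p\,|f(r)|^{p-1}\,|f'(r)|$, and this bound in fact holds a.e. on all of $I$. Hence
\[
  |f(t)|^p \leq |f(t_0)|^p + p \int_{I} |f(r)|^{p-1}\,|f'(r)| \, \d r.
\]
To the last integral I would apply Hölder's inequality with exponents $\frac{p}{p-1}$ and $p$, giving
\[
  \int_{I} |f(r)|^{p-1}\,|f'(r)| \, \d r \leq \Big( \int_{I} |f(r)|^{p} \, \d r\Big)^{\frac{p-1}{p}} \Big( \int_{I} |f'(r)|^{p} \, \d r\Big)^{\frac{1}{p}}.
\]
Combining the two displays with the bound on $|f(t_0)|^p$ and taking the supremum over $t \in I$ yields \eqref{eq:SobTypeIneq}.

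The only mildly delicate point — and thus the main obstacle, such as it is — is justifying the chain rule / absolute continuity of $r \mapsto |f(r)|^p$ and the a.e. bound $\big|\tfrac{\d}{\d r}|f|^p\big| \leq p|f|^{p-1}|f'|$ when $p \geq 1$ is not an even integer and $f$ may change sign or vanish. This is standard: on the open set $\{f \neq 0\}$ the function is $\SC^1$ and the bound is an equality; on the interior of $\{f = 0\}$ both sides vanish; and the boundary of the zero set has measure zero, while the function $|f|^p$ is locally Lipschitz there because $|f(r)| \leq \|f'\|_\infty \,\mathrm{dist}(r,\{f=0\})$ near such points (using $p \geq 1$). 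One can alternatively sidestep this entirely by first proving the inequality for $f$ replaced by $\sqrt{f^2 + \delta}$, which is genuinely $\SC^1$ and nowhere zero, and then letting $\delta \downarrow 0$ using dominated convergence, since $\frac{d}{dr}(f^2+\delta)^{p/2} = p(f^2+\delta)^{(p-2)/2} f f'$ is dominated by $p|f|^{p-1}|f'| \in L^1(I)$. Either route is routine.
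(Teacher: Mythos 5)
Your proof is correct and takes essentially the same approach as the paper: write $|f(t)|^p$ via the fundamental theorem of calculus starting from a base point $t_0$, apply H\"older's inequality to the derivative term, and then make an optimal choice of $t_0$ (you use the mean value theorem for integrals at the start; the paper averages over $t_0$ at the end — the two devices give the same term). Your remarks on the differentiability of $|f|^p$ for non-integer $p$ and sign-changing $f$ are a careful treatment of a point the paper glosses over by writing $f(t)^p = f(t_0)^p + \int_{t_0}^t p f(r)^{p-1} f'(r)\,\d r$ and ``taking absolute values,'' but this is a matter of presentation rather than a difference in approach.
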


\begin{proof}
	For any fixed $t, t_0 \in I$ we can write $f(t)^p = f(t_0)^p + \int_{t_0}^t p f(r)^{p-1} f'(r) \d r$. Taking absolute values, then using the H\"{o}lder inequality and finally taking the supremum over $t$, we deduce 
	that
	\begin{equation*}
		\sup_{t \in I} | f(t) |^p \leq | f(t_0) |^p + p \biggl( \int_{I} | f(r) |^p \d r \biggr)^{\frac{p-1}{p}} \biggl( \int_{I} | f'(r) |^p \d r \biggr)^{\frac{1}{p}}.
	\end{equation*}
	We conclude by averaging the variable $t_0$ over the interval $I$.
\end{proof}

The following proposition provides moment bounds for a simple stochastic integral.

\begin{proposition}\label{prop:one-fold-bound} 
 Let $G_t : \CD_{\eps} \to \R$ be a possibly random function, such that the function $t \mapsto G_t(t,x)$ is predictable. Then for any $p \geq 2$ and $T \in [0,1]$ we have 
\begin{align}\label{eq:BoundIntegral}
		&\E_p \sup_{t \in [0, T]}  \Big| \int_{\CD_{\eps, t} } G_s(s,x)  \,\d \bM_\eps(s,x) \;  \Big|\\
		& \qquad \lesssim_{p} \biggl( \eps^{d} \sum_{y \in \Le} \int_{0}^T  \big( \E_p G_{r}(r, y) \big)^2  \d r \biggr)^{\frac{1}{2}} + \eps^{d + \kone}\, \E_p \sup_{(s,x) \in \CD_{\eps, T}} | G_s(s, x) |. \nonumber
\end{align}
If $G_t(s,x)$ is of the form \eqref{eq:form-for-G}, then we have 
	\begin{equation}\label{eq:BoundIntegral-error}
		\E_p \sup_{(s,x) \in \CD_{\eps, T}} | G_s(s, x) | \lesssim \eps^{-(d + (d + \kone) n) \frac{1}{p}} \Big\Vert \E_p \sup_{t \in [0, T]} | G_t | \Big\Vert_{L^\infty_\eps}^{\frac{p-1}{p}} \Vert F \Vert_{\SC_s^1(L^\infty_\eps)}^{\frac{1}{p}},
	\end{equation}
	where $n_1 + \cdots + n_m = n$ and where we use the norm \eqref{eq:norm-for-F}.
\end{proposition}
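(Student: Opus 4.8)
The plan is to prove the two estimates \eqref{eq:BoundIntegral} and \eqref{eq:BoundIntegral-error} separately. For \eqref{eq:BoundIntegral}, since the paths of $\M_\eps$ are of bounded variation, the object $t \mapsto \int_{\CD_{\eps,t}} G_s(s,x)\,\d\bM_\eps(s,x)$ is a well-defined \cadlag process; moreover, because $t\mapsto G_t(t,x)$ is predictable and each $\M_\eps(\cdot,x)$ is a martingale with the martingales at distinct sites having orthogonal brackets, the process $t\mapsto \int_{\CD_{\eps,t}} G_s(s,x)\,\d\bM_\eps(s,x)$ is itself a \cadlag square-integrable martingale (this is where predictability of $G$ is used). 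We may therefore apply the \BDG inequality in the form of Proposition~\ref{prop:BDG}. The predictable quadratic variation is, by Assumption~\ref{a:Martingales}\eqref{it:bracket} and orthogonality of the sites,
\[
\Big\langle \textstyle\int_{\CD_{\eps,\bigcdot}} G\,\d\bM_\eps \Big\rangle_t = \sum_{x\in\Le}\eps^{2d}\int_0^t G_s(s,x)^2 \,\d\langle\M_\eps(x)\rangle_s = \sum_{x\in\Le}\eps^{d}\int_0^t G_s(s,x)^2\, \C_\eps(s,x)\,\d s,
\]
which, using $|\C_\eps|\lesssim 1$, has $\E[\,\cdot\,^{p/2}]^{1/p}$ bounded via Minkowski's integral inequality by a constant times $\big(\eps^d\sum_y \int_0^T (\E_p G_r(r,y))^2\,\d r\big)^{1/2}$. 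The jump term is handled by noting that a jump of the integral at time $s$ equals $\eps^d G_s(s,x)\Delta_s\M_\eps(x)$ for the (unique, by Assumption~\ref{a:Martingales}\eqref{it:jumps1}) site $x$ that jumps, whose absolute value is $c\eps^{d+\kone}|G_s(s,x)|$ by Assumption~\ref{a:Martingales}\eqref{it:jumps2}; taking the supremum over $s$ and then the $L^p$ norm gives the second term.

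For \eqref{eq:BoundIntegral-error}, I would interpolate between a pointwise $L^p$ bound and a $\SC^1$-in-time bound on $G$, using the Sobolev-type inequality of Lemma~\ref{lem:Sobolev}. The point is that $t\mapsto G_t(t,x)$, while random, is differentiable in $t$ because $F$ is $\SC^1$ in each time variable and the inner integrals in \eqref{eq:form-for-G} depend on $t$ only through the upper limit $s_m < t$ of the outermost integral, so $\partial_t G_t(t,x)$ is the sum of a boundary term (the integrand at $s_m = t$) and an integral of $\partial_s F$; both are controlled by $\Vert F\Vert_{\SC_s^1(L^\infty_\eps)}$ times the total mass of the iterated measure $\Md{n_1}\otimes\cdots\otimes\Md{n_m}$ on the relevant simplex. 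Applying Lemma~\ref{lem:Sobolev} on a time interval $I$ of unit length (recall $T\le 1$) and taking $L^p(\Omega)$ norms, one gets
\[
\E_p \sup_{t\in[0,T]} |G_t(t,x)|^p \lesssim \E_p\!\int_0^T |G_r(r,x)|^p \d r + p\,\big(\E_p\!\!\int_0^T |G_r(r,x)|^p\d r\big)^{\frac{p-1}{p}}\big(\E_p\!\!\int_0^T |\partial_r G_r(r,x)|^p\,\d r\big)^{\frac1p},
\]
and then one takes the supremum over $x\in\Le$. The $\eps^{-(d+(d+\kone)n)/p}$ prefactor appears because the $\SC^1$-in-time derivative of $G$ involves the total variation / total mass of the measures $\Md{n_i}$, and by the explicit formula \eqref{eq:Integral_gamma_bound_equality} together with Lemma~\ref{lem:TV} (or Lemma~\ref{lem:jumps}) this total mass contributes the negative power of $\eps$; more precisely a single $\Md{n}$ carries a factor $\eps^{nd}$ from \eqref{eq:one-fold_integral} times roughly $\eps^{-d-2\kone}$ jumps times $(c\eps^\kone)^n$, and accumulating these over the chain $n_1,\dots,n_m$ with $n_1+\cdots+n_m=n$ produces exactly $\eps^{-(d+(d+\kone)n)}$, of which one takes the $\frac1p$-th power after the interpolation. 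One replaces the first summand on the right by the product form using $\Vert \E_p\sup_t |G_t|\Vert_{L^\infty_\eps}\ge$ the $L^p$-time-integral term (since $T\le 1$).

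The main obstacle I expect is the bookkeeping in the $\SC^1$-derivative estimate for $G$ and, in particular, pinning down the exact power $\eps^{-(d+(d+\kone)n)/p}$: one has to (i) differentiate the iterated integral \eqref{eq:form-for-G} in the outer time variable carefully, separating the boundary contribution from the $\partial_s F$ contribution, (ii) bound both by $\Vert F\Vert_{\SC_s^1(L^\infty_\eps)}$ against the total mass of the product measure $\Md{n_1}\otimes\cdots\otimes\Md{n_m}$ restricted to the ordered simplex, and (iii) compute that total mass via the explicit identity \eqref{eq:Integral_gamma_bound_equality} and the jump count from Lemma~\ref{lem:jumps}, keeping track of all powers of $\eps$ and of $c$. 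The $L^p(\Omega)$-norm of a product of a priori dependent random quantities should be dealt with by Hölder's inequality (splitting $p$ into conjugate exponents), which is why the exponents $\frac{p-1}{p}$ and $\frac1p$ appear; the remaining subtlety is to make sure this Hölder split is compatible with the one already used inside Lemma~\ref{lem:Sobolev}. Everything else — the martingale property, the \BDG application, the identification of the bracket and of the jumps — is routine given the assumptions.
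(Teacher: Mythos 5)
Your proof of the first estimate \eqref{eq:BoundIntegral} is correct and follows exactly the paper's route: apply Proposition~\ref{prop:BDG} to the martingale $t \mapsto \int_{\CD_{\eps,t}} G_s(s,x)\,\d\bM_\eps(s,x)$, identify the predictable quadratic variation via Assumption~\ref{a:Martingales}\eqref{it:bracket} and the orthogonality of sites, bound the resulting $L^{p/2}$-norm via Minkowski's integral inequality, and bound the jump term using Assumption~\ref{a:Martingales}\eqref{it:jumps1}--\eqref{it:jumps2}. Nothing to add there.

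Your plan for \eqref{eq:BoundIntegral-error}, however, has a genuine gap. You propose to apply Lemma~\ref{lem:Sobolev} to $t \mapsto G_t(t,x)$ and claim that this map is differentiable in $t$, with $\partial_t G_t(t,x)$ equal to a ``boundary term (the integrand at $s_m = t$)'' plus an integral of $\partial_s F$. That is not the case: the $t$-dependence of $G_t$ enters through the upper limit of integration against $\Md{n_m}$, and by \eqref{eq:Integral_gamma_bound_equality} the measure $\Md{n_m}$ is (for $n_m\geq 2$) purely atomic, consisting of point masses at the jump times of the martingales. Hence $t \mapsto G_t(t,x)$ is merely \cadlag, not $\SC^1$, and the ``boundary term'' you describe is a distribution rather than a function. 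Lemma~\ref{lem:Sobolev} requires $\SC^1$ regularity and simply cannot be applied to this map. The same issue also arises when $n_m=1$, since then $\Md{1}=\bM_\eps$ still has the jump part.

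The missing idea is to decouple the two roles of the outer time variable. The paper observes that $\sup_{s\in[0,T]}|G_s(s,x)| \leq \sup_{s,t\in[0,T]}|G_t(s,x)|$, and then applies Lemma~\ref{lem:Sobolev} only in the first argument $s$ (the slot in $F$), for each fixed $t$; this is legitimate because \eqref{eq:form-for-G} makes $s\mapsto G_t(s,x)$ genuinely $\SC^1$, with $\partial_s G_t(s,x)$ being the same iterated integral but with $\partial_s F$ as integrand, cf.~\eqref{eq:G-bound}. The supremum over $t$ is then taken inside the expectation with no differentiation required. Once this decoupling is in place, the rest of your bookkeeping (paying $\eps^{-d/p}$ to pass from a spatial supremum to a sum, bounding the total variation of the $\Md{n_i}$ via Lemma~\ref{lem:TV}, and H\"older to produce the $\frac{p-1}{p}$ and $\frac{1}{p}$ exponents) is sound and consistent with the paper, but without the decoupling the argument does not get off the ground.
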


\begin{remark}
In our application, $\partial_s F$ will be badly behaved and in general blow up as negative power in $\eps$, similarly to the other exploding terms.  However, as $p$ can and will be chosen arbitrarily large, all can be absorbed in small pre-factor $\eps^{d+\kone}$ and therefore  these error terms are all harmless.
\end{remark}

\begin{proof}[of Proposition~\ref{prop:one-fold-bound}]
	Using the \BDG inequality (Proposition~\ref{prop:BDG}) and Assumption~\ref{a:Martingales}, we get
	\begin{align}
		&\E_p \sup_{t \in [0, T]}  \Big| \int_{\CD_{\eps, t} } G_s(s,x)  \,\d \bM_\eps(s,x) \;  \Big|\\
		 &\qquad \lesssim_{p} \E \Biggl[ \biggl( \sum_{y \in \Le} \eps^{2d} \int_{r=0}^T G_{r}(r, y)^2 \d \langle \M_{\eps}(y) \rangle_r \biggr)^{\frac{p}{2}} \Biggr]^{\frac{1}{p}} + \E \biggl[ \sup_{t \in [0, T]} | G_t(t,x)  \Delta_t \M_\eps(x) |^p \biggr]^{\frac{1}{p}} \nonumber 
		\\ & \qquad \lesssim_{p}  \E \Biggl[ \biggl( \sum_{y \in \Le} \eps^{d} \int_{0}^T G_{r}(r, y)^2 \d r \biggr)^{\frac{p}{2}} \Biggr]^{\frac{1}{p}} + \eps^{d + \kone} \E \biggl[ \sup_{(s,x) \in \CD_{\eps, T}} | G_s(s, x) |^p \biggr]^{\frac{1}{p}}  , \label{eq:BDGApplication}
	\end{align}
	where the proportionality constant in the last bound comes from Assumption~\ref{a:Martingales}\eqref{it:bracket} and \eqref{it:jumps2}. The first term on the right hand side of \eqref{eq:BDGApplication} can be controlled by the first term  in \eqref{eq:BoundIntegral} by an application of Minkowski's inequality. This yields the required bound \eqref{eq:BoundIntegral}.

	Now we sill prove \eqref{eq:BoundIntegral-error}. First, the supremum over the lattice points is replaced by a sum at the expense of a small negative power of $\eps$ and second, we use $\sup_{s \in [0,T]} |G_s(s,x)| \leq \sup_{s, t \in [0,T]} |G_t(s,x)|$ to arrive at 
	\begin{equation} \label{eq:summation_over_space_points}
		\E \biggl[ \sup_{(s,x) \in \CD_{\eps, T}} | G_s(s,x) |^p \biggr]^{\frac{1}{p}} \leq \eps^{-\frac{d}{p}} \Biggl( \eps^d \sum_{y \in \Le} \E \biggl[ \sup_{s, t \in [0,T]} | G_t(s, y) |^p \biggr] \Biggr)^{\frac{1}{p}}.
	\end{equation}
	We note that this bound holds because $\Le$ is a finite lattice. 
	We apply first \eqref{eq:SobTypeIneq}  to the supremum in the variable $s$ and then H\"older's inequality   to bound the right-hand side of  \eqref{eq:summation_over_space_points} by a constant times
	\begin{align*}
		&\eps^{-\frac{d}{p}} \Biggl( \eps^d \sum_{y \in \Le} \E  \sup_{t \in [0,T]} \Biggl[  \frac{1}{T}\int_{0}^T | G_t (s,y) |^p \d s \\
		& \hspace{4cm} + \Biggl( \int_0^T | G_t(s, y) |^p \d s \Biggr)^{\frac{p-1}{p}} \Biggl( \int_0^T | \partial_s G_t(s, y) |^p \d s \Biggr)^{\frac{1}{p}} \Biggr] \Biggr)^{\frac{1}{p}} \\
		&\lesssim\eps^{-\frac{d}{p}} \Biggl( \eps^d \sum_{y \in \Le} \frac{1}{T}\int_{0}^T \E \biggl[ \sup_{t \in [0,T]} | G_t (s,y) |^p \biggr] \d s \\ 
		&\quad + \Biggl( \eps^d \sum_{y \in \Le}  \int_0^T  \E \biggl[ \sup_{t \in [0,T]} | G_t(s, y) |^p\,  \biggr]  \d s \Biggr)^{\frac{p-1}{p}} \Biggl( \eps^d \sum_{y \in \Le} \int_0^T \E \biggl[  \sup_{t \in [0,T]} | \partial_s G_t(s, y) |^p\,  \biggr] \d s \Biggr)^{\frac{1}{p}} \Biggr)^{\frac{1}{p}}.
	\end{align*}
To bound the norms of the function $G_t$, we observe that
\[
 \eps^d \sum_{y \in \Le} \int_{0}^T \E \biggl[ \sup_{t \in [0,T]} | G_t (s,y) |^p \biggr] \d s \lesssim T \sup_{(s,y) \in \CD_{\eps, T}} \E \biggl[ \sup_{t \in [0,T]} | G_t (s,y) |^p \biggr],
\]
where we used that the grid $\Le$ is finite. Furthermore, the definition \eqref{eq:form-for-G} yields
\begin{equation}\label{eq:G-bound}
| \partial_s G_t(s, y) | \lesssim \Vert \partial_s F \Vert_{L^\infty_\eps} \sum_{x_1, \ldots, x_m \in \Le} \eps^{m d} \prod_{i = 1}^m \| \Md{n_i}(x_i) \|_{\TV((s_{i-1}, s_{i+1}))},
\end{equation}
with $s_0 = 0$ and $s_{m+1} = t$. Identities \eqref{eq:oddContr}/\eqref{eq:evenContr} and Lemma~\ref{lem:TV} allow to bound 
\begin{equation*}
\E_p \| \Md{n_i}(x_i) \|_{\TV((s_{i-1}, s_{i+1}))} \lesssim \eps^{-(\kone + d) n_i}.
\end{equation*}
The total variation norms are computed in \eqref{eq:G-bound} on non-intersecting intervals, and using respective conditioned expectations we can bound 
\begin{equation}\label{eq:bound-on-deriv}
\E_p \sup_{t \in [0,T]} | \partial_s G_t(s, y) | \lesssim \Vert \partial_s F \Vert_{L^\infty_\eps} \prod_{i = 1}^m \eps^{-(\kone + d) n_i} \lesssim \Vert \partial_s F \Vert_{L^\infty_\eps} \eps^{-(\kone + d) n},
\end{equation}
where $n_1 + \cdots + n_m = n$. Combining the preceding bounds, we conclude that \eqref{eq:summation_over_space_points} is estimated by a constant multiple of 
\begin{align} \nonumber
&\eps^{-\frac{d}{p}} \Biggl( \Big\Vert \E_p \sup_{t \in [0, T]} | G_t | \Big\Vert^p_{L^\infty_\eps} + \eps^{-(d + \kone) n} \Vert  \partial_s F \Vert_{L^\infty_\eps} \Big\Vert \E_p \sup_{t \in [0, T]} | G_t | \Big\Vert_{L^\infty_\eps}^{p-1} \Biggr)^{\frac{1}{p}} \\
&\qquad = \eps^{-\frac{d}{p}} \Big\Vert \E_p \sup_{t \in [0, T]} | G_t | \Big\Vert_{L^\infty_\eps}^{\frac{p-1}{p}} \Biggl( \Big\Vert \E_p \sup_{t \in [0, T]} | G_t | \Big\Vert_{L^\infty_\eps} + \eps^{-(d + \kone) n} \Vert  \partial_s F \Vert_{L^\infty_\eps} \Biggr)^{\frac{1}{p}}, \label{eq:intermediate-bound}
\end{align}
where we used our assumption $T \leq 1$. Similarly to \eqref{eq:bound-on-deriv} we get
\begin{equation*}
\E_p \sup_{t \in [0,T]} | G_t(s, y) | \lesssim \Vert F \Vert_{L^\infty_\eps} \prod_{i = 1}^m \eps^{-(\kone + d) n_i} \lesssim \Vert F \Vert_{L^\infty_\eps} \eps^{-(\kone + d) n},
\end{equation*}
and \eqref{eq:intermediate-bound} is estimated by a constant multiple of 
\begin{equation*}
\eps^{-\frac{d}{p} -(d + \kone) \frac{n}{p}} \Big\Vert \E_p \sup_{t \in [0, T]} | G_t | \Big\Vert_{L^\infty_\eps}^{\frac{p-1}{p}} \Vert F \Vert_{\SC_s^1(L^\infty_\eps)}^{\frac{1}{p}},
\end{equation*}
where we used the norm \eqref{eq:norm-for-F}. This gives the required bound \eqref{eq:BoundIntegral-error}.
\end{proof}

In connection with the representation developed in Section~\ref{sec:representation} we get the following. 

\begin{proposition} \label{prop:boundIntegralContr}
Let $G_t$ be as in \eqref{eq:BoundIntegral} and let $n_0 \geq 2$. Then for any $p \geq 2$ and $T \in [0,1]$ we have 
	\begin{align}\nonumber
		&\E_p \sup_{t \in [0, T]} \Big| \int_{\CD_{\eps, t}} G_s(s,x)  \,\d \Md{n_0}(s,x) \;  \Big| \lesssim_{p} \eps^{(d + \kone) (n_0-1)} \biggl( \eps^{d} \sum_{y \in \Le} \int_{0}^T  \big( \E_p G_{r}(r, y) \big)^2  \d r \biggr)^{\frac{1}{2}} \\
		&\hspace{2cm} + \eps^{(d + \kone) (n_0-2)} \biggl\Vert \E_p \sup_{t \in [0, T]} | G_t | \biggr\Vert_{L^1_\eps} + \eps^{(d + \kone) n_0} \E_p \sup_{(s,x) \in \CD_{\eps, T}} | G_s(s, x) |. \label{eq:boundIntegralContr}
	\end{align}
If $G_t(s,x)$ is of the form \eqref{eq:form-for-G}, then the last expectation is bounded by \eqref{eq:BoundIntegral-error}.
\end{proposition}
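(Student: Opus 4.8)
The plan is to reduce the bound to the one-fold estimate of Proposition~\ref{prop:one-fold-bound} by inserting the explicit decompositions of the diagonal measure $\Md{n_0}$ derived in \eqref{eq:Integral_gamma_bound_equality_n=2}, \eqref{eq:oddContr} and \eqref{eq:evenContr}. As was observed there, these three identities---for $n_0=2$, $n_0$ odd $\geq 3$, and $n_0$ even $\geq 4$---all have the same structure, so the three cases can be treated uniformly.

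First I would apply \eqref{eq:Integral_gamma_bound_equality_n=2}/\eqref{eq:oddContr}/\eqref{eq:evenContr} with the (random) integrand $F=G_s$; this is legitimate by the remark after \eqref{def:Mn}, and since $s\mapsto G_s(s,x)$ is predictable the stochastic integrals that appear are well defined. The outcome is that
\begin{equation*}
\int_{\CD_{\eps,t}} G_s(s,x)\,\d\Md{n_0}(s,x)
= a\,\eps^{(d+\kone)(n_0-1)}\sum_{x\in\Le}\eps^d\!\int_0^t G_s(s,x)\,\d\mathsf{M}_\eps(s,x)
+ b\,\eps^{(d+\kone)(n_0-2)}\sum_{x\in\Le}\eps^d\!\int_0^t G_s(s,x)\,D_\eps(s,x)\,\d s,
\end{equation*}
where $a,b$ are non-random constants; $\mathsf{M}_\eps$ is $\M_\eps$ if $n_0$ is odd and $\Nm_\eps$ if $n_0$ is even (including $n_0=2$)---in either case a \cadlag martingale satisfying Assumption~\ref{a:Martingales} with the same exponent $\kone$, by Lemma~\ref{lem:martingales-N}; and $D_\eps$ is $\Cd_\eps$ if $n_0$ is odd and $\C_\eps$ otherwise, so $|D_\eps|\lesssim 1$ a.s.\ in all cases. (For $n_0=2$ one simply checks $\eps^{2d+\kone}=\eps^{(d+\kone)(n_0-1)}\eps^d$ and that the drift prefactor $\eps^d$ equals $\eps^{(d+\kone)(n_0-2)}\eps^d$, so the unified form covers that case as well.)

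Next I would estimate the two terms separately. For the martingale term, Proposition~\ref{prop:one-fold-bound} applied to $\mathsf{M}_\eps$---valid by the previous paragraph, with error prefactor still $\eps^{d+\kone}$ since the jumps of $\mathsf{M}_\eps$ are of order $\eps^{\kone}$---bounds $\E_p\sup_{t\in[0,T]}$ of $\sum_{x\in\Le}\eps^d\int_0^t G_s(s,x)\,\d\mathsf{M}_\eps(s,x)$ by a $p$-dependent constant times $\bigl(\eps^d\sum_{y\in\Le}\int_0^T(\E_p G_r(r,y))^2\,\d r\bigr)^{1/2}+\eps^{d+\kone}\E_p\sup_{(s,x)\in\CD_{\eps,T}}|G_s(s,x)|$; multiplying by $\eps^{(d+\kone)(n_0-1)}$ and using $\eps^{(d+\kone)(n_0-1)}\eps^{d+\kone}=\eps^{(d+\kone)n_0}$ produces the first and third terms on the right-hand side of \eqref{eq:boundIntegralContr}. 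For the drift term, the map $t\mapsto\sum_{x\in\Le}\eps^d\int_0^t G_s D_\eps\,\d s$ is absolutely continuous and $|D_\eps|\lesssim 1$, so its supremum over $[0,T]$ is a.s.\ at most $\eps^d\sum_{x\in\Le}\int_0^T|G_s(s,x)|\,\d s$; taking $\E_p$, then Minkowski's integral inequality and $|G_s(s,x)|\leq\sup_{t\in[0,T]}|G_t(s,x)|$, this is at most $\eps^d\sum_{x\in\Le}\int_0^T\E_p\sup_{t\in[0,T]}|G_t(s,x)|\,\d s=\Vert\E_p\sup_{t\in[0,T]}|G_t|\Vert_{L^1_\eps}$, and multiplying by $\eps^{(d+\kone)(n_0-2)}$ gives the second term of \eqref{eq:boundIntegralContr}. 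Summing the two contributions proves \eqref{eq:boundIntegralContr}. The concluding sentence is immediate: when $G_t$ is of the form \eqref{eq:form-for-G}, the bound \eqref{eq:BoundIntegral-error} of Proposition~\ref{prop:one-fold-bound} applies to $\E_p\sup_{(s,x)\in\CD_{\eps,T}}|G_s(s,x)|$ verbatim.

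I do not expect a real obstacle: the substance lies entirely in Proposition~\ref{prop:one-fold-bound} and in the diagonal decompositions. The two points that deserve a little care are (i) applying Proposition~\ref{prop:one-fold-bound} with $\Nm_\eps$ in place of $\M_\eps$, which is exactly Lemma~\ref{lem:martingales-N} (noting that the jumps of $\Nm_\eps$ are $c^2\eps^{\kone}$, still of order $\eps^{\kone}$, so the $\eps^{d+\kone}$ prefactor is unchanged); and (ii) the bookkeeping of the exponents $(d+\kone)(n_0-1)$ and $(d+\kone)(n_0-2)$ uniformly over the parity of $n_0$, which the unified display above is designed to handle.
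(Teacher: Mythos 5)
Your proof is correct and follows essentially the same route as the paper's: insert the diagonal-measure identities \eqref{eq:oddContr}/\eqref{eq:evenContr} (with \eqref{eq:Integral_gamma_bound_equality_n=2} as the $n_0=2$ case) to split into a martingale part and a drift part, then apply Proposition~\ref{prop:one-fold-bound} to the martingale term (using Lemma~\ref{lem:martingales-N} for $\Nm_\eps$) and bound the drift term directly. Your version is slightly more explicit about unifying the parities and the $n_0=2$ bookkeeping, but the substance and the invoked results coincide.
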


\begin{proof}
	Using \eqref{eq:oddContr}/\eqref{eq:evenContr} and Assumption~\ref{a:Martingales} we get
	\begin{align}\label{eq:simple-integral-proof}
	\Big| \int_{\CD_{\eps, t}} G_s(s,x)  \,\d \Md{n_0}(s,x) \;  \Big| &\lesssim \eps^{(d + \kone) (n_0-1)} \biggl| \int_{\CD_{\eps, t}}  G_s(s,x)\,  \d \Mnew_{\eps}(s, x) \biggr|\\
	& \qquad\qquad \qquad + \eps^{(d + \kone) (n_0-2)} \sum_{x \in \Le} \eps^{d} \int_{0 \leq s \leq t} |G_s(s,x)|\, \d  s \nonumber
	\end{align}
	a.s., where the martingale $\Mnew_{\eps}$ is either $\bM_\eps$ or $\ov{\bM}_\eps$, depending on whether $n_0$ is odd or even. In particular, the martingale satisfies Assumption~\ref{a:Martingales}. For the first term in \eqref{eq:simple-integral-proof} we use Proposition~\ref{prop:one-fold-bound} to get
	\begin{align*}
		\E_p \sup_{t \in [0,T]} \biggl| \int_{\CD_{\eps, t}}  G_s(s,x)\,  \d \Mnew_{\eps}(s, x) \biggr| &\lesssim \biggl( \eps^{d} \sum_{y \in \Le} \int_{0}^T  \big( \E_p G_{r}(r, y) \big)^2  \d r \biggr)^{\frac{1}{2}} \\
		&\qquad + \eps^{d + \kone}\, \E_p \sup_{(s,x) \in \CD_{\eps, T}} | G_s(s, x) |,
	\end{align*}
	and for the second term in \eqref{eq:simple-integral-proof} we have
	\begin{equation*}
		\E_p \biggl| \sum_{x \in \Le} \eps^{d} \int_{0 \leq s \leq t} |G_s(s,x)|\, \d  s \Biggr| \lesssim \biggl\Vert \E_p \sup_{t \in [0, T]} | G_t | \biggr\Vert_{L^1_\eps}.
	\end{equation*}
This gives the required bound \eqref{eq:boundIntegralContr}.
\end{proof}

\subsection{Bounds for general iterated integrals}
\label{sec:GeneralBounds}

Let $n, m \in \N$ with $m \leq n$ be fixed. For a contraction $\ga \in \fC_{m}(n)$ and a permutation $\sigma \in \Sigma_\ga$, we want to prove moment bounds for general multiple iterated integrals \eqref{eq:integral_contraction}. For this, we will define a norm on the function $F^{\ga,\sigma}$ from \eqref{eq:F_gamma}. 

For $m \geq 1$ we denote by $\SP_{\!m}$ the set of all functions $\bp : \set{m} \to \{1, 2, \infty\}$. Then for $\bp \in \SP_{\!1}$ we set $\Vert F^{\ga,\sigma} \Vert_{L^{\bp}_\eps} = \Vert F^{\ga,\sigma} \Vert_{L^{\bp(1)}_\eps}$, and for $m \geq 2$ and $\bp \in \SP_{\!m}$ we define the norm recursively 
\begin{equation}\label{eq:norms-def}
\Vert F^{\ga,\sigma} \Vert_{L^{\bp}_\eps} := \Big\Vert \bigl\Vert (F^{\ga,\sigma})^{(z_m)} \1_{s_m > s_{m-1}} \bigr\Vert_{L^{\bp \restriction_{\SP_{\!m-1}}}_\eps} \Big\Vert_{L^{\bp(m)}_\eps},
\end{equation}
where $\bp \restriction_{\set{m-1}}$ is the restriction of $\bp$ to $\set{m-1}$, the function $(F^{\ga,\sigma})^{(z_m)} \colon \CD_\eps^{m-1} \to \R$ is defined as $(F^{\ga,\sigma})^{(z_m)}(z_1, \ldots, z_{m-1}) = F^{\ga,\sigma}(z_1, \ldots, z_{m-1}, z_m)$, and the outer norm in \eqref{eq:norms-def} is computed with respect to the variable $z_m$. The indicator $\1_{s_m > s_{m-1}}$, with the convention $s_0 = 0$, is needed to respect the domain of integration in \eqref{eq:recForm}. 

For $\bp \in \SP_{\!m}$ we will also use the standard notation $\bp^{-1} \colon \{1, 2, \infty\} \to 2^{\set{m}}$ for the inverse function. For any $\ga \in \fC_{m}(n)$, we define the set of non-contracted variables
\begin{equation}\label{eq:Gamma-sets}
	\Gamma_{\!1}(\ga) := \{ i \in \set{m}:  |\ga_{i}| = 1 \}.
\end{equation}

The following is our main result for the general iterated integrals. 

\begin{theorem}\label{thm:IterIntMomentBound}
	Let martingales $( \M_{\eps} (t, x) )_{t \geq 0}$ satisfy Assumption~\ref{a:Martingales}, $\ga \in \fC_{m}(n)$, $\sigma \in \Sigma_\ga$ be any ordering for the components of $\ga$, and let $F \colon \CD_\eps^n \to \R$ be $\SC^1$ in each time variable. Then for every $p \geq 2$ and $T \in [0,1]$
	\begin{align}\label{eq:IterIntMomentBound}
		&\E_p \biggl[ \sup_{t \in [0, T]} \big| ( \CI^\eps_{\ga, \sigma} F )_t \big| \biggr] \\
		&\qquad \lesssim_{p} \sum_{\substack{\bp \in \SP_{\!m} :\\ \bp^{-1}(1) \cap \Gamma_{\!1}(\ga) = \emptyset}} \eps^{\alpha_\ga(\bp)} \Vert F^{\ga,\sigma} \Vert_{L^{\bp}_\eps}^{\beta_{\ga, p}(\bp)} \Biggl(\prod_{\substack{i \in \bp^{-1}(\infty) \setminus \Gamma_{\!1}(\ga) : \\ i \geq 2}} \eps^{-\kappa_{\ga, i}(\bp)} \Vert F^{\ga, \sigma} \Vert^{\beta_{\ga^{\smallgeq i}, p}(\bp^{\smallgeq i})}_{\SC_{s_{i}}^1(L^\infty_\eps)}\Biggr)^{\frac{1}{p}}, \nonumber
	\end{align}
	for some constants $\kappa_{\ga, i}(\bp) > 0$, where the function $F^{\ga,\sigma}$ is defined in \eqref{eq:F_gamma}, the powers are
	\begin{align}\label{eq:alpha-power}
		\alpha_\gamma(\bp) &:= (d+\kone) \biggl(\sum_{i = 1}^m |\ga_i| - 2 |\bp^{-1}(1)| - |\bp^{-1}(2)|\biggr), \\
		\beta_{\ga, p}(\bp) &:= \prod_{\substack{i \in \bp^{-1}(\infty) \setminus \Gamma_{\!1}(\ga) : \\ i \geq 2}} \left(\frac{p-1}{p}\right), \label{eq:beta-power}
	\end{align}
	the contraction $\ga^{\smallgeq i}$ has components $\ga^{\smallgeq i}_1, \ldots, \ga^{\smallgeq i}_{m-i+1}$ such that $\ga^{\smallgeq i}_j = \ga_{i + j-1}$, and the function $\bp^{\smallgeq i} \in \SP_{\!m-i+1}$ is defined as $\bp^{\smallgeq i}(j) = \bp(i + j-1)$.
\end{theorem}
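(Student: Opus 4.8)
The plan is to prove Theorem~\ref{thm:IterIntMomentBound} by induction on the number $m$ of components of $\ga$, using the recursive representation of Lemma~\ref{lem:recForm} as the scaffolding and the single–variable estimates of Propositions~\ref{prop:one-fold-bound} and~\ref{prop:boundIntegralContr} (which crucially allow a random, predictable integrand $G$) as the recursive engine, while the norm recursion \eqref{eq:norms-def} organises the output.

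\emph{Base case.} For $m=1$ one has $(\CI^\eps_{\ga,\sigma}F)_t=\int_{\CD_{\eps,t}}F^{\ga,\sigma}\,\d\Md{|\ga_1|}$ with $F^{\ga,\sigma}$ deterministic. If $|\ga_1|\ge 2$, Proposition~\ref{prop:boundIntegralContr} produces exactly three summands, $\eps^{(d+\kone)(|\ga_1|-1)}\Vert F^{\ga,\sigma}\Vert_{L^2_\eps}$, $\eps^{(d+\kone)(|\ga_1|-2)}\Vert F^{\ga,\sigma}\Vert_{L^1_\eps}$ and $\eps^{(d+\kone)|\ga_1|}\Vert F^{\ga,\sigma}\Vert_{L^\infty_\eps}$, which are the $\bp(1)=2,1,\infty$ terms with prefactor $\eps^{\alpha_\ga(\bp)}$; since $m=1$ the product over $i\ge 2$ is empty, so $\beta_{\ga,p}(\bp)=1$ and, because $F^{\ga,\sigma}$ is deterministic, the estimate \eqref{eq:BoundIntegral-error} is not invoked and no $\SC^1$-factor appears. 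If $|\ga_1|=1$ one uses Proposition~\ref{prop:one-fold-bound} instead, which has only an $L^2$ and a jump term; this is the origin of the restriction $\bp^{-1}(1)\cap\Gamma_{\!1}(\ga)=\emptyset$ in \eqref{eq:IterIntMomentBound}.

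\emph{Inductive step.} Assume the bound for all contractions with fewer than $m$ components. By Lemma~\ref{lem:recForm} one peels off one component (say of size $n_0$), writing $(\CI^\eps_{\ga,\sigma}F)_t=\int_{\CD_{\eps,t}}G_s(s,x)\,\d\Md{n_0}(s,x)$, where for each fixed frozen argument $(s,x)$ the map $t\mapsto G_t(s,x)$ is an iterated integral of the form \eqref{eq:form-for-G} for the reduced $(m-1)$-component contraction (this is $\ga^{\smallgeq 2}$ after the relabelling of \eqref{eq:F_gamma}) applied to $F^{\ga,\sigma}(\,\cdot\,,(s,x))\,\1_{s_{m-1}<s}$, the indicator being exactly the one built into \eqref{eq:norms-def}. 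Applying Proposition~\ref{prop:boundIntegralContr} when $n_0\ge 2$, or Proposition~\ref{prop:one-fold-bound} when $n_0=1$ (the peeled component then lies in $\Gamma_{\!1}(\ga)$ and the $L^1$ term is absent), bounds $\E_p\sup_{t\le T}|(\CI^\eps_{\ga,\sigma}F)_t|$ by $\eps^{(d+\kone)(n_0-1)}$ times an $L^2_{(r,y)}$-average of $\E_pG_r(r,y)$, plus $\eps^{(d+\kone)(n_0-2)}\Vert\E_p\sup_t|G_t|\Vert_{L^1_\eps}$, plus $\eps^{(d+\kone)n_0}\,\E_p\sup_{(s,x)}|G_s(s,x)|$. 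The factors $\E_pG_r(r,y)$ and $\E_p\sup_t|G_t(s,y)|$ are moments of $(m-1)$-fold iterated integrals, so the induction hypothesis applies and writes each as a sum over $\bp'\in\SP_{\!m-1}$ of $\eps^{\alpha}\Vert F^{\ga,\sigma}\Vert_{L^{\bp'}_\eps}^{\beta}\big(\prod\eps^{-\kappa}\Vert F^{\ga,\sigma}\Vert_{\SC^1}^{\beta'}\big)^{1/p}$; the outstanding $L^2_{(r,y)}$ (resp.\ $L^1_{(s,y)}$) integration over the frozen argument then merges with the inner norm via \eqref{eq:norms-def} to produce $\Vert F^{\ga,\sigma}\Vert_{L^{\bp}_\eps}$, with $\bp$ the extension of $\bp'$ by the value $2$ (resp.\ $1$) at the new coordinate, and the peeling prefactor $(d+\kone)(n_0-2\1_{\bp(\cdot)=1}-\1_{\bp(\cdot)=2})$ adds to the inductive one to give exactly $\alpha_\ga(\bp)$ as in \eqref{eq:alpha-power}. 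The term $\E_p\sup_{(s,x)}|G_s(s,x)|$ is controlled through \eqref{eq:BoundIntegral-error} (hence Lemma~\ref{lem:Sobolev}); this is the only mechanism generating a factor $\Vert F^{\ga,\sigma}\Vert_{\SC^1_{s_i}(L^\infty_\eps)}^{1/p}$ together with a negative power $\eps^{-\kappa_{\ga,i}(\bp)}$, and it attaches only at a contracted $\infty$-position of index $\ge 2$ (at position $1$ the core function is deterministic and Lemma~\ref{lem:Sobolev} is not used), matching the product in \eqref{eq:IterIntMomentBound}. The hypothesis $T\le 1$ enters only to discard positive powers of $T$.

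\emph{The main difficulty} is the bookkeeping rather than any individual estimate. Because the inductive bounds on $\E_pG_r(r,y)$ are sums of \emph{products} of norms raised to fractional powers, the outer $L^2$- and $L^\infty$-integrations are genuinely nonlinear and must be carried out by Minkowski's and H\"older's inequalities with exponents $p/(p-1)$ and $p$; one has to verify that each such step contributes precisely one factor $\tfrac{p-1}{p}$ exactly when the newly created coordinate has type $\infty$ and the underlying component is neither a singleton nor in position $1$, so that these factors assemble into the product formula \eqref{eq:beta-power} for $\beta_{\ga,p}(\bp)$ and into the compatible exponents $\beta_{\ga^{\smallgeq i},p}(\bp^{\smallgeq i})$ attached to the $\SC^1$-norms. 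One must likewise track the relabelling so that the three-way split composes along the recursion exactly into \eqref{eq:norms-def} (nested indicators included), that the $\eps$-exponents telescope to $\alpha_\ga(\bp)$, and that the negative powers $\kappa_{\ga,i}(\bp)$, built from the $\eps^{-(d+(d+\kone)n)/p}$ prefactor in \eqref{eq:BoundIntegral-error} and from repeatedly re-expanding $\Vert\E_p\sup_t|G_t|\Vert_{L^\infty_\eps}$, remain strictly positive. Finally, the possible blow-up of $\partial_sF$ in $\eps^{-1}$ is harmless: it occurs only to the power $1/p$ with $p$ arbitrary, and is absorbed by the explicit positive $\eps$-prefactors.
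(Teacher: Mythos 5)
Your proposal follows essentially the same route as the paper's proof: induction on $m$ via Lemma~\ref{lem:recForm}, with Propositions~\ref{prop:one-fold-bound} and~\ref{prop:boundIntegralContr} providing the recursive step, Jensen/H\"older to pass the fractional exponent $\beta$ through the outer norms, the norm recursion \eqref{eq:norms-def} absorbing the new coordinate, and \eqref{eq:BoundIntegral-error} as the sole source of the $\SC^1$-factors. The only blemish is notational: the paper peels off the \emph{last} (outermost) component, so the remaining contraction is $\bar\ga=(\ga_1,\ldots,\ga_{m-1})$, not $\ga^{\smallgeq 2}=(\ga_2,\ldots,\ga_m)$ as you write; the symbol $\ga^{\smallgeq i}$ in the theorem is reserved for tracking the exponents on the $\SC^1$-norms, not for the reduced contraction. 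This slip does not affect the substance of the argument.
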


\begin{remark}
Precise values of the constants $\kappa_{\ga, i}(\bp)$ in \eqref{eq:IterIntMomentBound} will not be important to us, although they may be obtained from the proof of Theorem~\ref{thm:IterIntMomentBound}. We show below that for $p$ sufficiently large the divergent factors $\eps^{-\kappa_{\ga, i}(\bp)/ p}$ are compensated by the multiplier $\eps^{\alpha_\ga(\bp)}$.
\end{remark}

\begin{remark}\label{rem:bound-like-Gaussian}
One can see that for any function $\bp$ in \eqref{eq:IterIntMomentBound} satisfying $\bp^{-1}(\infty) \neq \emptyset$ we have $\alpha_\gamma(\bp) \geq d + \kone$. Indeed, we can estimate $\sum_{i = 1}^m |\ga_i| \geq |\Gamma_{\!1}(\ga)| + 2 (m - |\Gamma_{\!1}(\ga)|) = 2m - |\Gamma_{\!1}(\ga)|$, because the sum over $|\Gamma_{\!1}(\ga)|$ components $\gamma_i$ of cardinality $1$ is exactly $|\Gamma_{\!1}(\ga)|$ and the sum over the other $m - |\Gamma_{\!1}(\ga)|$ component is at least $2 (m - |\Gamma_{\!1}(\ga)|)$. On the other hand, the assumptions on $\bp$ yield $2 |\bp^{-1}(1)| + |\bp^{-1}(2)| < m + |\bp^{-1}(1)| \leq 2 m - |\Gamma_{\!1}(\ga)|$. Thus, we conclude that $\alpha_\gamma(\bp) \geq d + \kone$.

Similarly, we have $\alpha_\gamma(\bp) \geq d + \kone$ if the contraction $\gamma$ has a component $\gamma_i$ such that $|\gamma_i| \geq 3$. Repeating the preceding computations, we get $\sum_{i = 1}^m |\ga_i| > 2m - |\Gamma_{\!1}(\ga)|$ and $2 |\bp^{-1}(1)| + |\bp^{-1}(2)| \leq 2 m - |\Gamma_{\!1}(\ga)|$, which yield the required estimate. 

Finally, one can see that we have $\alpha_\gamma(\bp) \geq d + \kone$ if there is a component $\gamma_i$ such that $i \in \bp^{-1}(2)$ and $|\gamma_i| \geq 2$.

Hence, if we have a sufficiently good control on the norms $\Vert F^{\ga,\sigma} \Vert_{L^{\bp}_\eps}$ and the product in \eqref{eq:IterIntMomentBound} over $i \in \bp^{-1}(\infty)$ is bounded by $C \eps^{-\kappa}$ for some $\kappa > 0$, we can take $p$ sufficiently large such that $\eps^{\alpha_\ga(\bp)} \eps^{-\kappa/p}$ vanishes as $\eps \to 0$. So that we expect that the only non-vanishing terms in the limit $\eps \to 0$ are those with correspond to contractions $\gamma$ with components of cardinalities at most $2$ and functions $\bp$ satisfying $\bp^{-1}(\infty) = \emptyset$ and $\bp^{-1}(2) = \Gamma_{\!1}(\ga)$. In these cases we have $\alpha_\gamma(\bp) = 0$ and $\beta_{\ga, p}(\bp) = 1$, and the estimate \eqref{eq:IterIntMomentBound} is similar to the bound for a multiple Wiener integral. 
\end{remark}

\begin{remark}
The bound \eqref{eq:IterIntMomentBound} is homogeneous with respect to $F$, i.e. multiplication of $F$ by a constant $\lambda > 0$ is equivalent to multiplication of the two sides of \eqref{eq:IterIntMomentBound} by $\lambda$. While the homogeneity of the left-hand side is trivial, seeing it for the right-hand side is more complicated. Let us consider the term in the sum in \eqref{eq:IterIntMomentBound} corresponding to a function $\bp$. 

If $\bp^{-1}(\infty) \setminus \Gamma_{\!1}(\ga) = \emptyset$ or $\bp^{-1}(\infty) \setminus \Gamma_{\!1}(\ga) = \{1\}$, then $\beta_{\ga, p}(\bp) = 1$ and the product in the parentheses in \eqref{eq:IterIntMomentBound} equals $1$. The respective term in the sum in \eqref{eq:IterIntMomentBound} equals $\eps^{\alpha_\ga(\bp)} \Vert F^{\ga,\sigma} \Vert_{L^{\bp}_\eps}$ and is homogeneous with respect to $F$.

Let us now look at the case when the set $\{ i \in \bp^{-1}(\infty) \setminus \Gamma_{\!1}(\ga) : i \geq 2\}$ is non-empty, and let $N$ be the magnitude of this set. Then the $\lambda$-multiplier corresponding to this term in the sum in \eqref{eq:IterIntMomentBound} equals 
\begin{equation*}
\lambda^{\beta_{\ga, p}(\bp)} \Biggl(\prod_{\substack{i \in \bp^{-1}(\infty) \setminus \Gamma_{\!1}(\ga) : \\ i \geq 2}} \lambda^{\beta_{\ga^{\smallgeq i}, p}(\bp^{\smallgeq i})}\Biggr)^{\frac{1}{p}}.
\end{equation*}
Furthermore, from the definition \eqref{eq:alpha-power} we conclude that the power of $\lambda$ may be written as
\begin{align*}
\left(\frac{p-1}{p}\right)^{N} + \frac{1}{p} \sum_{i = 1}^N \left(\frac{p-1}{p}\right)^{N- i} = 1.
\end{align*}
Hence, this term in the sum in \eqref{eq:IterIntMomentBound} is homogeneous with respect to $F$.
\end{remark}

\begin{proof}[of Theorem~\ref{thm:IterIntMomentBound}]
We prove this theorem by induction over the number $m$ of components in $\gamma$.

	The base of induction is $m = 1$, in which case $\gamma$ has only one component $\gamma_1$ such that $|\gamma_1| = n$. If $n = 1$ then the required bound \eqref{eq:IterIntMomentBound} is given in Proposition~\ref{prop:one-fold-bound}. (In this case, only two functions $\bp$ contribute to the sum in \eqref{eq:IterIntMomentBound}: $\bp(1) = 2$ and $\bp(1) = \infty$, which correspond to $\alpha_\gamma(\bp) = 0$ and $\alpha_\gamma(\bp) = d + \kone$ respectively. In both cases $\beta_{\ga, p}(\bp) = 1$. Then the two terms on the right-hand side of \eqref{eq:IterIntMomentBound} coincide with the two terms in \eqref{eq:BoundIntegral}.) If $n \geq 2$ then the bound \eqref{eq:IterIntMomentBound} is provided by Proposition~\ref{prop:boundIntegralContr}. (The three functions $\bp$ contributing to the sum in \eqref{eq:IterIntMomentBound} are $\bp(1) = 1$, $\bp(1) = 2$ and $\bp(1) = \infty$, which correspond to $\alpha_\gamma(\bp) = (d + \kone) (n-2)$, $\alpha_\gamma(\bp) = (d + \kone) (n-1)$ and $\alpha_\gamma(\bp) = (d + \kone) n$ respectively. In all cases $\beta_{\ga, p}(\bp) = 1$.)
	
	To make an inductive step, we assume that \eqref{eq:IterIntMomentBound} holds for all contractions having $m$ components and we will prove it for a contraction $\ga \in \fC_{m+1}(n)$. Lemma~\ref{lem:recForm} yields 
	\begin{equation}\label{eq:iterated-formula}
		( \CI^\eps_{\ga, \sigma} F )_t = \int_ {s_{m+1} < t} \Big( \CI^\eps_{\bar \ga, \bar \sigma} F^{( z_{m+1})} \Big)_{s_{m+1}-} \d   \Md{|\gamma_{m+1}|}(z_{m+1}),
	\end{equation}
	where $\bar \ga$ is the contraction obtained from $\ga$ by removing the $(m+1)^{\text{th}}$ component, $\bar \sigma$ is obtained by restricting $\sigma$ to $\set{m}$, and the function $F^{( z_{m+1})}$ is obtained from $F$ by setting all the variables, whose labels are in $(m+1)$-st equivalence class according to $\sigma$, to $z_{m+1}$. If $|\gamma_{m+1}| = 1$, we bound \eqref{eq:iterated-formula} using Proposition~\ref{prop:one-fold-bound}:
	\begin{align}
		\E_p \biggl[ \sup_{t \in [0, T]} | ( \CI^\eps_{\ga, \sigma} F )_t | \biggr] &\lesssim_p \Bigl\Vert \E_{p} \sup_{t \in [0,T]} \Bigl| \Bigl( \CI^\eps_{\bar \ga, \bar \sigma} F^{( z_{m+1})} \Bigr)_t \Bigr| \Bigr\Vert_{L^2_\eps} \nonumber \\
		&\qquad + \eps^{d+\kone} \Bigl\Vert \E_{p} \sup_{t \in [0, T]} \Bigl| \Bigl( \CI^\eps_{\bar \ga, \bar \sigma} F^{( z_{m+1})} \Bigr)_t \Bigr| \Bigr\Vert_{L^\infty_\eps}, \label{eq:M-integral-1}
	\end{align}
	 and if $|\gamma_{m+1}| \geq 2$, we use Proposition~\ref{prop:boundIntegralContr} and the estimate \eqref{eq:BoundIntegral-error}:
	\begin{align}
		&\E_p \biggl[ \sup_{t \in [0, T]} | ( \CI^\eps_{\ga, \sigma} F )_t | \biggr] \lesssim_p \eps^{(d+\kone)(|\ga_{m+1}| - 1)} \Bigl\Vert \E_{p} \sup_{t \in [0,T]} \Bigl| \Bigl( \CI^\eps_{\bar \ga, \bar \sigma} F^{( z_{m+1})} \Bigr)_t \Bigr| \Bigr\Vert_{L^2_\eps} \nonumber \\
		&\qquad + \eps^{(d+\kone)(|\ga_{m+1}| - 2)} \biggl\Vert \E_{p} \sup_{t \in [0,T]} \Bigl| \Bigl( \CI^\eps_{\bar \ga, \bar \sigma} F^{( z_{m+1})} \Bigr)_t \Bigr| \biggr\Vert_{L^1_\eps} \label{eq:M-integral-2} \\
		&\qquad + \eps^{(d+\kone)|\ga_{m+1}|} \eps^{-(d + (d + \kone) (n-|\ga_{m+1}|)) \frac{1}{p}} \Bigl\Vert \E_{p} \sup_{t \in [0, T]} \Bigl| \Bigl( \CI^\eps_{\bar \ga, \bar \sigma} F^{( z_{m+1})} \Bigr)_t \Bigr| \Bigr\Vert_{L^\infty_\eps}^{\frac{p-1}{p}} \Vert F^{\ga, \sigma} \Vert_{\SC_{s_{m+1}}^1(L^\infty_\eps)}^{\frac{1}{p}}. \nonumber
	\end{align}
	The function inside the expectations is itself an iterated integral of the function $F^{( z_{m+1})}$ with the contraction $\bar \gamma$ having $m$ components. We can use the induction hypothesis and the simple bound $\Vert (F^{\bar\ga,\bar\sigma})^{(z_{m+1})} \Vert_{\SC_{s_{i}}^1(L^\infty_\eps)} \leq \Vert F^{\ga, \sigma} \Vert_{\SC_{s_{i}}^1(L^\infty_\eps)}$ to get moment bounds for the expectation: 
	\begin{equation}\label{eq:induction}
		\E_p \biggl[ \sup_{t \in [0, T]} \Bigl| \Bigl( \CI^\eps_{\bar \ga, \bar \sigma} F^{( z_{m+1})}\Bigr)_t \Bigr| \biggr] \lesssim_{p} \sum_{\substack{\bar\bp \in \SP_{\!m} :\\ \bar\bp^{-1}(1) \cap \Gamma_{\!1}(\bar \ga) = \emptyset}} \eps^{\alpha_{\bar \ga}(\bar\bp)} \Vert (F^{\bar\ga,\bar\sigma})^{(z_{m+1})} \Vert^{\beta_{\bar \ga, p}(\bar\bp)}_{L^{\bar\bp}_\eps}\, E_p(\bar \ga, \bar \bp),
	\end{equation}
	where we denoted 
	\begin{equation}\label{eq:E-def}
	E_p(\bar \ga, \bar \bp) := \Biggl(\prod_{\substack{i \in \bar\bp^{-1}(\infty) \setminus \Gamma_{\!1}(\bar \ga) : \\ i \geq 2}} \eps^{-\kappa_{\bar \ga, i}(\bar \bp)} \Vert F^{\ga, \sigma} \Vert^{\beta_{\bar\ga_{\smallgeq i}, p}(\bar \bp_{\smallgeq i})}_{\SC_{s_{i}}^1(L^\infty_\eps)}\Biggr)^{\frac{1}{p}},
	\end{equation}
	for some constants $\kappa_{\bar \ga, i}(\bar \bp) > 0$. 
Then we use the preceding bound in \eqref{eq:M-integral-1} to get
	\begin{align} \label{eq:M-integral-3}
		\E_p \biggl[ \sup_{t \in [0, T]} | ( \CI^\eps_{\ga, \sigma} F )_t | \biggr] &\lesssim_p \sum_{\substack{\bar\bp \in \SP_{\!m} :\\ \bar\bp^{-1}(1) \cap \Gamma_{\!1}(\bar \ga) = \emptyset}} \eps^{\alpha_{\bar \ga}(\bar\bp)} \Bigl\Vert \Vert (F^{\bar\ga,\bar\sigma})^{(z_{m+1})} \Vert^{\beta_{\bar \ga, p}(\bar\bp)}_{L^{\bar\bp}_\eps} \Bigr\Vert_{L^2_\eps} E_p(\bar \ga, \bar \bp)  \\
		&\qquad + \sum_{\substack{\bar\bp \in \SP_{\!m} :\\ \bar\bp^{-1}(1) \cap \Gamma_{\!1}(\bar \ga) = \emptyset}} \eps^{\alpha_{\bar \ga}(\bar\bp) + d+\kone} \Bigl\Vert \Vert (F^{\bar\ga,\bar\sigma})^{(z_{m+1})} \Vert^{\beta_{\bar \ga, p}(\bar\bp)}_{L^{\bar\bp}_\eps} \Bigr\Vert_{L^\infty_\eps} E_p(\bar \ga, \bar \bp). \nonumber
	\end{align}
	We have $\Bigl\Vert \Vert (F^{\bar\ga,\bar\sigma})^{(z_{m+1})} \Vert^{\beta_{\bar \ga, p}(\bar\bp)}_{L^{\bar\bp}_\eps} \Bigr\Vert_{L^\infty_\eps} \leq \Bigl\Vert \Vert (F^{\bar\ga,\bar\sigma})^{(z_{m+1})} \Vert_{L^{\bar\bp}_\eps} \Bigr\Vert_{L^\infty_\eps}^{\beta_{\bar \ga, p}(\bar\bp)}$. Moreover, we have $\beta_{\bar \ga, p}(\bar\bp) \leq 1$ and Jensen's inequality yields $\Bigl\Vert \Vert (F^{\bar\ga,\bar\sigma})^{(z_{m+1})} \Vert^{\beta_{\bar \ga, p}(\bar\bp)}_{L^{\bar\bp}_\eps} \Bigr\Vert_{L^2_\eps} \lesssim \Bigl\Vert \Vert (F^{\bar\ga,\bar\sigma})^{(z_{m+1})} \Vert_{L^{\bar\bp}_\eps} \Bigr\Vert_{L^2_\eps}^{\beta_{\bar \ga, p}(\bar\bp)}$. We introduce new functions $\bp \in \SP_{\!m+1}$, such that $\bp(i) = \bar \bp(i)$ for $i \in \set{m}$, and $\bp(m+1) = 2$ and $\bp(m+1) = \infty$ in the two sums in \eqref{eq:M-integral-3} respectively. Then \eqref{eq:alpha-power} and $|\ga_{m+1}| = 1$ imply that the powers of $\eps$ in \eqref{eq:M-integral-3} are exactly $\alpha_{\ga}(\bp)$. Furthermore, \eqref{eq:beta-power} yields $\beta_{\bar \ga, p}(\bar\bp) = \beta_{\ga, p}(\bp)$ and \eqref{eq:E-def} yields $E_p(\bar \ga, \bar \bp) = E_p(\ga, \bp)$. Hence, recalling the definition \eqref{eq:norms-def} we get the required bound \eqref{eq:IterIntMomentBound} for $m+1$. 
	
	Now, we use the bound \eqref{eq:induction} in \eqref{eq:M-integral-2} and get 
	\begin{align} \nonumber
		&\E_p \biggl[ \sup_{t \in [0, T]} | ( \CI^\eps_{\ga, \sigma} F )_t | \biggr] \\
		&\quad \lesssim_p \sum_{\substack{\bar\bp \in \SP_{\!m} :\\ \bar\bp^{-1}(1) \cap \Gamma_{\!1}(\bar \ga) = \emptyset}} \eps^{\alpha_{\bar \ga}(\bar\bp) + (d+\kone)(|\ga_{m+1}| - 1)} \Bigl\Vert \Vert (F^{\bar\ga,\bar\sigma})^{(z_{m+1})} \Vert^{\beta_{\bar \ga, p}(\bar\bp)}_{L^{\bar\bp}_\eps} \Bigr\Vert_{L^2_\eps} E_p(\bar \ga, \bar \bp) \nonumber \\
		&\qquad + \sum_{\substack{\bar\bp \in \SP_{\!m} :\\ \bar\bp^{-1}(1) \cap \Gamma_{\!1}(\bar \ga) = \emptyset}} \eps^{\alpha_{\bar \ga}(\bar\bp) + (d+\kone)(|\ga_{m+1}| - 2)} \Bigl\Vert \Vert (F^{\bar\ga,\bar\sigma})^{(z_{m+1})} \Vert^{\beta_{\bar \ga, p}(\bar\bp)}_{L^{\bar\bp}_\eps} \Bigr\Vert_{L^1_\eps} E_p(\bar \ga, \bar \bp) \label{eq:M-integral-4} \\
		&\qquad + \sum_{\substack{\bar\bp \in \SP_{\!m} :\\ \bar\bp^{-1}(1) \cap \Gamma_{\!1}(\bar \ga) = \emptyset}} \eps^{\alpha_{\bar \ga}(\bar\bp) \frac{p-1}{p} + (d+\kone)|\ga_{m+1}| -(d + (d + \kone) (n-|\ga_{m+1}|)) \frac{1}{p}} \Bigl\Vert \Vert (F^{\bar\ga,\bar\sigma})^{(z_{m+1})} \Vert^{\beta_{\bar \ga, p}(\bar\bp)}_{L^{\bar\bp}_\eps} \Bigr\Vert_{L^\infty_\eps}^{\frac{p-1}{p}} \nonumber \\
		&\hspace{7cm} \times E_p(\bar \ga, \bar \bp)^{\frac{p-1}{p}} \Vert F^{\ga, \sigma} \Vert_{\SC_{s_{m+1}}^1(L^\infty_\eps)}^{\frac{1}{p}}, \nonumber
	\end{align}
	where in the last line we used subadditivity of the function $x \mapsto x^{\frac{p-1}{p}}$ for $x \geq 0$. As above, we estimate the norms by moving the power $\beta_{\bar \ga, p}(\bar\bp)$ to the outer norms. Furthermore, we introduce functions $\bp \in \SP_{\!m+1}$, such that $\bp(i) = \bar \bp(i)$ for $i \in \set{m}$, and $\bp(m+1) = 2$, $\bp(m+1) = 1$ and $\bp(m+1) = \infty$ in the three sums respectively. Then the powers of $\eps$ in the first and second sums in \eqref{eq:M-integral-4} equal $\alpha_{\ga}(\bp)$. Moreover, we have $\beta_{\bar \ga, p}(\bar\bp) = \beta_{\ga, p}(\bp)$ and $E_p(\bar \ga, \bar \bp) = E_p(\ga, \bp)$ in these sums. The last sum in \eqref{eq:M-integral-4} is more complicated. The power of $\eps$ equals $\alpha_{\ga}(\bp) - \kappa_{\ga, m+1}(\bp) \frac{1}{p}$ with $\kappa_{\ga, m+1}(\bp) = \alpha_{\bar \ga}(\bar \bp) + d + (d + \kone) (n- |\ga_{m+1}|) > 0$. Furthermore, $\beta_{\bar \ga, p}(\bar\bp) \frac{p-1}{p} = \beta_{\ga, p}(\bp)$. Setting $\kappa_{\ga, i}(\bp) = \kappa_{\bar\ga, i}(\bar\bp) \frac{p-1}{p}$ for $i \leq m$, we get 
	\begin{align*}
	&\eps^{-\kappa_{\ga, m+1}(\bp) \frac{1}{p}} E_p(\bar \ga, \bar \bp)^{\frac{p-1}{p}} \Vert F^{\ga, \sigma} \Vert_{\SC_{s_{m+1}}^1(L^\infty_\eps)}^{\frac{1}{p}} \\
	&\qquad =  \Biggl(\eps^{-\kappa_{\ga, m+1}(\bp)}\Vert F^{\ga, \sigma} \Vert_{\SC_{s_{m+1}}^1(L^\infty_\eps)} \Biggl(\prod_{\substack{i \in \bar\bp^{-1}(\infty) \setminus \Gamma_{\!1}(\bar \ga) : \\ i \geq 2}} \eps^{-\kappa_{\bar \ga, i}(\bar \bp)} \Vert F^{\ga, \sigma} \Vert^{\beta_{\bar\ga_{\smallgeq i}, p}(\bar \bp_{\smallgeq i})}_{\SC_{s_{i}}^1(L^\infty_\eps)}\Biggr)^{\frac{p-1}{p}}\Biggr)^{\frac{1}{p}} \\
	&\qquad =  \Biggl(\prod_{\substack{i \in \bp^{-1}(\infty) \setminus \Gamma_{\!1}(\ga) : \\ i \geq 2}} \eps^{-\kappa_{\ga, i}(\bp)} \Vert F^{\ga, \sigma} \Vert^{\beta_{\ga_{\smallgeq i}, p}(\bp_{\smallgeq i})}_{\SC_{s_{i}}^1(L^\infty_\eps)}\Biggr)^{\frac{1}{p}}, 
	\end{align*}
	where we used the identities $\beta_{\ga_{\smallgeq m+1}, p}(\bp_{\smallgeq m+1}) = 1$ and $\beta_{\ga_{\smallgeq i}, p}(\bp_{\smallgeq i}) = \beta_{\bar\ga_{\smallgeq i}, p}(\bar \bp_{\smallgeq i}) \frac{p-1}{p}$ which follow from the definitions. The preceding expression has the form \eqref{eq:E-def} for the contraction $\ga$ and the function $\bp$. Hence, recalling the definition \eqref{eq:norms-def}, we get from \eqref{eq:M-integral-4} the required bound \eqref{eq:IterIntMomentBound} for $m+1$. 
\end{proof}

\subsubsection{Renormalised iterated integrals} 
\label{sec:renormalized-integrals}
 
In the theory of regularity structures \cite{Regularity}, there is usually the need to renormalise stochastic objects. Introducing renormalised integrals against martingales is the goal of this section.

Let $(\M_{\eps}(t, x))_{t \geq 0}$ be martingales satisfying Assumption~\ref{a:Martingales}. Let a function $F \colon \CD_\eps^n \to \R$ be as in \eqref{eq:integral_contraction}, where the contraction $\ga$ has only one component $\ga_1$, such that $|\ga_1| = n$ is even, and let the permutation $\sigma$ be trivial. We define the integral
\begin{equation}\label{eq:RenormIntegralsLeb}
	( \CI^{\eps, \triangledown}_{\ga, \sigma} F )_t := \eps^{(d+\kone)(n - 2)} \int_{\CD_{\eps, t}} F^{\ga,\sigma}(z) \C_\eps(z) \, \d z,
\end{equation}
where we use the function $F^{\ga,\sigma}$ defined in \eqref{eq:F_gamma}. In this expression we integrate the contracted variable with respect to the bracket process \eqref{eq:quadr_var_formula} of the martingale. Furthermore, we define the renormalised integral 
\begin{equation}\label{eq:RenormIntegrals}
	( \CI^{\eps, \diamond}_{\ga, \sigma} F )_t := (\CI^\eps_{\ga, \sigma} F )_t - ( \CI^{\eps, \triangledown}_{\ga, \sigma} F )_t = \eps^{(d+\kone)(n - 1)} \int_{\CD_{\eps, t}} F^{\ga,\sigma}(s, x) \, \d \Nm_\eps(s, x),
\end{equation}
where the martingale $\Nm_\eps$ is defined in \eqref{eq:renorm-martingale}. As we will see in our application in Section~\ref{sec:applicationStochQuant}, we will consider the situation when the integral $\CI^\eps_{\ga, \sigma}$ diverges as $\eps \to 0$, and in order to control the latter we need to consider its renormalisation $\CI^{\eps, \diamond}_{\ga, \sigma}$ instead. If the noise was Gaussian, then the renormalising term $ \CI^{\eps, \triangledown}_{\ga, \sigma}$ would be deterministic. In our case, it is however a stochastic process.

In general, let $\ga \in \fC_{m}(n)$ with $1 \leq m \leq n$, and let $\sigma \in \Sigma_\ga$ be a permutation. Moreover, let us label components of $\ga$ using $\L \in \{ \triangledown, \diamond, \nil \}^{\set{m}}$, such that the label $\L(i)$ assigned to a component shows with respect to which process the variable is integrated. For $m=1$ we set
\[
	(\CI^{\eps, \L}_{\ga, \sigma} F)_t := \left\{ 
		\begin{aligned} & (\CI^{\eps, \triangledown}_{\ga_{1}, \sigma} F)_t &&\text{if } \L(1) = \triangledown, \\
		& (\CI^{\eps, \diamond}_{\ga_{1}, \sigma} F)_t  &&\text{if } \L(1) = \diamond\,, \\
		& (\CI^{\eps}_{\ga_{1}, \sigma} F)_t &&\text{if } \L(1) = \nil\,.
	\end{aligned} \right.
\]
Since we defined the integrals \eqref{eq:RenormIntegralsLeb} and \eqref{eq:RenormIntegrals} only for even $n$, we will always assume that $\L(i) = \nil$ for any $i$  such that $|\ga(i)|$ is odd. Then for $m \geq 2$ we define recursively 
\begin{equation}\label{eq:general_integral}
	(\CI^{\eps, \L}_{\ga, \sigma} F)_t := \left\{
		\begin{aligned} & \eps^{(d+\kone)(|\ga_m| - 2)} \int_ {\CD_{\eps, t}} \Bigl( \CI^{\eps, \L\restriction_{\set{m-1}}}_{\bar\ga, \bar \sigma} F^{(z_{m})} \Bigr)_{s_{m}-} \C_\eps(z_m)\, \d z_{m}  &&\text{if } \L ( m) = \triangledown, \\
		& \eps^{(d+\kone)(|\ga_m| - 1)} \int_ {\CD_{\eps, t}} \Bigl( \CI^{\eps, \L\restriction_{\set{m-1}}}_{\bar\ga, \bar \sigma} F^{(z_{m})} \Bigr)_{s_{m}-} \d \Nm_\eps(z_{m}) &&\text{if } \L(m) =\diamond\,, \\
		& \int_ {\CD_{\eps, t}} \Bigl( \CI^{\eps, \L\restriction_{\set{m-1}}}_{\bar\ga, \bar \sigma} F^{(z_{m})} \Bigr)_{s_{m}-} \, \d \Md{|\gamma_{m}|}(z_m) &&\text{if } \L(m) = \nil\,,
	\end{aligned} \right.
\end{equation}
where the function $F^{(z_{m})}$ is obtained from $F$ by setting the values of the variables in $\{ \sigma(i) : i \in \gamma_{m}\}$ to $z_{m}$, where $\bar\ga$ is the contraction that removes the $m^{\text{th}}$ component of $\ga$, where the labeling $\L$ is restricted to the indices in $\set{m-1}$, and where $\bar \sigma$ is the restriction of $\sigma$ to $\set{m-1}$.

For a labeling $\L$ it will be convenient to define the sets
\begin{equation*}
\L^{-1}(\triangledown) := \{ i: \L(i) = \triangledown \}, \qquad \L^{-1}(\diamond) := \{ i: \L(i) = \diamond \}, 
\end{equation*}
which contain the indices of the components labeled by ``$\triangledown$'' and ``$\diamond$'' respectively. Similarly to \eqref{eq:Gamma-sets} we define the set 
\begin{equation}\label{eq:Gamma-set-new}
	\Gamma(\ga) := \Gamma_1(\ga) \cup \L^{-1}(\diamond).
\end{equation}
of variables integrated with respect to martingales. 

The following result is an analogue of Theorem~\ref{thm:IterIntMomentBound} for the renormalised integrals. 

\begin{theorem}\label{thm:integral-bound-renorm}
	In the setting of Theorem~\ref{thm:IterIntMomentBound}, let $\L$ be a labeling of the contraction $\gamma \in \fC_{m}(n)$. Then for every $p \geq 2$ and $T \in [0, 1]$
	\begin{align}\label{eq:IterIntMomentBoundRenormalised}
		&\E_p \biggl[ \sup_{t \in [0, T]} \big| ( \CI^{\eps, \L}_{\ga, \sigma} F )_t \big| \biggr] \\
		&\qquad \lesssim_{p} \sum_{\substack{\bp \in \SP_{\!m} :\\ \bp^{-1}(1) \cap \Gamma(\ga) = \emptyset, \\ \L^{-1}(\triangledown) \subset \bp^{-1}(1)}} \eps^{\alpha_\ga(\bp)} \Vert F^{\ga,\sigma} \Vert_{L^{\bp}_\eps}^{\beta_{\ga, p}(\bp)} \Biggl(\prod_{\substack{i \in \bp^{-1}(\infty) \setminus \Gamma_{\!1}(\ga) : \\ i \geq 2}} \eps^{-\kappa_{\ga, i}(\bp)} \Vert F^{\ga, \sigma} \Vert^{\beta_{\ga^{\smallgeq i}, p}(\bp^{\smallgeq i})}_{\SC_{s_{i}}^1(L^\infty_\eps)}\Biggr)^{\frac{1}{p}}, \nonumber
	\end{align}
	for some constants $\kappa_{\ga, i}(\bp) > 0$, where $\alpha_\gamma(\bp)$ and $\beta_{\ga, p}(\bp)$ are defined in \eqref{eq:alpha-power} and \eqref{eq:beta-power}.
\end{theorem}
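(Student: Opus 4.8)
The plan is to mirror the inductive proof of Theorem~\ref{thm:IterIntMomentBound}, running an induction on the number $m$ of components of $\ga$ and splitting each recursive step of \eqref{eq:general_integral} according to the three possibilities $\L(m)\in\{\nil,\diamond,\triangledown\}$. One preliminary observation makes this possible: the error estimate \eqref{eq:BoundIntegral-error} remains valid when the inner function $G_t$ is built not only from the measures $\Md{n_i}$ but also from the renormalised pieces $\eps^{(d+\kone)(n_i-1)}\,\d\Nm_\eps$ and $\eps^{(d+\kone)(n_i-2)}\C_\eps\,\d z$, because each of these three has total-variation mass bounded (up to a constant) by $\eps^{-(d+\kone)n_i}$ --- for $\Md{n_i}$ and for $\Nm_\eps$ this is Lemma~\ref{lem:TV} together with Lemma~\ref{lem:martingales-N}, and for the Lebesgue piece it follows from $|\C_\eps|\lesssim 1$ and $T\leq 1$. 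Since the proof of \eqref{eq:BoundIntegral-error} uses only those total-variation bounds, it transcribes verbatim; likewise $\Nm_\eps$ satisfies Assumption~\ref{a:Martingales}, so Propositions~\ref{prop:one-fold-bound} and~\ref{prop:boundIntegralContr} apply to it.

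\emph{Base case $m=1$.} Here $\ga=\{\ga_1\}$ with $|\ga_1|=n$. If $\L(1)=\nil$ there is nothing to prove beyond the base case of Theorem~\ref{thm:IterIntMomentBound}, which produces exactly the admissible functions $\bp(1)\in\{1,2,\infty\}$ (only $\{2,\infty\}$ for $n=1$); this is consistent with the constraints of \eqref{eq:IterIntMomentBoundRenormalised} since $\L^{-1}(\triangledown)=\emptyset$ and, for $n\geq 2$, $1\notin\Gamma(\ga)$, whereas for $n=1$ one has $1\in\Gamma_{\!1}(\ga)\subset\Gamma(\ga)$, forcing $\bp(1)\neq 1$. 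If $\L(1)=\diamond$ (so $n$ even, $\geq 2$), formula \eqref{eq:RenormIntegrals} writes the integral as $\eps^{(d+\kone)(n-1)}\int_{\CD_{\eps,t}}F^{\ga,\sigma}\,\d\Nm_\eps$, and Proposition~\ref{prop:one-fold-bound} applied to $\Nm_\eps$ yields an $L^2_\eps$ term with total $\eps$-power $(d+\kone)(n-1)=\alpha_\ga(\bp)$ for $\bp(1)=2$ and an $L^\infty_\eps$ error term with total $\eps$-power $(d+\kone)n=\alpha_\ga(\bp)$ for $\bp(1)=\infty$; no $L^1_\eps$ term appears, consistent with $1\in\L^{-1}(\diamond)\subset\Gamma(\ga)$. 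If $\L(1)=\triangledown$, then \eqref{eq:RenormIntegralsLeb} is a pure Lebesgue integral and $|\C_\eps|\lesssim 1$ with $T\leq 1$ bounds it by $\eps^{(d+\kone)(n-2)}\Vert F^{\ga,\sigma}\Vert_{L^1_\eps}$, the single term $\bp(1)=1$ with $\alpha_\ga(\bp)=(d+\kone)(n-2)$, consistent with $\L^{-1}(\triangledown)=\{1\}\subset\bp^{-1}(1)$.

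\emph{Inductive step.} Assume \eqref{eq:IterIntMomentBoundRenormalised} for all $m$-component contractions and let $\ga\in\fC_{m+1}(n)$ carry a labeling $\L$. By \eqref{eq:general_integral}, $(\CI^{\eps,\L}_{\ga,\sigma}F)_t$ is an outer integral of the predictable integrand $\bigl(\CI^{\eps,\L\restriction_{\set{m}}}_{\bar\ga,\bar\sigma}F^{(z_{m+1})}\bigr)_{s_{m+1}-}$, itself an $m$-component renormalised iterated integral to which the induction hypothesis applies after the trivial bound $\Vert(F^{\bar\ga,\bar\sigma})^{(z_{m+1})}\Vert_{\SC^1_{s_i}(L^\infty_\eps)}\leq\Vert F^{\ga,\sigma}\Vert_{\SC^1_{s_i}(L^\infty_\eps)}$. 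For $\L(m+1)=\nil$ one repeats the argument of Theorem~\ref{thm:IterIntMomentBound} verbatim (Proposition~\ref{prop:one-fold-bound} if $|\ga_{m+1}|=1$, Proposition~\ref{prop:boundIntegralContr} with \eqref{eq:BoundIntegral-error} if $|\ga_{m+1}|\geq 2$), generating $\bp(m+1)\in\{1,2,\infty\}$, resp.\ $\{2,\infty\}$. For $\L(m+1)=\diamond$ one applies Proposition~\ref{prop:one-fold-bound} to $\Nm_\eps$ with the prefactor $\eps^{(d+\kone)(|\ga_{m+1}|-1)}$, which generates only $\bp(m+1)\in\{2,\infty\}$ (no Lebesgue term). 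For $\L(m+1)=\triangledown$ one bounds the outer Lebesgue integral by $\eps^{(d+\kone)(|\ga_{m+1}|-2)}$ times the $L^1_\eps$-norm of $\E_p\sup_{t\in[0,T]}\bigl|(\CI^{\eps,\L\restriction_{\set{m}}}_{\bar\ga,\bar\sigma}F^{(z_{m+1})})_t\bigr|$, generating only $\bp(m+1)=1$. In each case one then, exactly as in Theorem~\ref{thm:IterIntMomentBound}, moves the power $\beta_{\bar\ga,p}(\bar\bp)\leq 1$ onto the outer norm (via $\Vert\cdot^\theta\Vert_{L^\infty_\eps}\leq\Vert\cdot\Vert_{L^\infty_\eps}^\theta$ and Jensen for $L^2_\eps$, $L^1_\eps$), defines $\bp\in\SP_{\!m+1}$ extending $\bar\bp$ by the value $\bp(m+1)$ just produced, and checks using \eqref{eq:alpha-power} that the accumulated $\eps$-power is $\alpha_\ga(\bp)$ on the $L^2_\eps$/$L^1_\eps$ contributions and $\alpha_\ga(\bp)-\kappa_{\ga,m+1}(\bp)/p$ with $\kappa_{\ga,m+1}(\bp)=\alpha_{\bar\ga}(\bar\bp)+d+(d+\kone)(n-|\ga_{m+1}|)>0$ on the $L^\infty_\eps$ contribution, while \eqref{eq:beta-power} and the identities $\beta_{\ga^{\smallgeq i},p}(\bp^{\smallgeq i})=\beta_{\bar\ga^{\smallgeq i},p}(\bar\bp^{\smallgeq i})\tfrac{p-1}{p}$ and $\beta_{\ga^{\smallgeq m+1},p}(\bp^{\smallgeq m+1})=1$ make the $\beta$-powers and the $\SC^1$-error product recombine into the form \eqref{eq:E-def}. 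Crucially the labeling constraints propagate: a $\triangledown$-step yields only $\bp(m+1)=1$, so $\L^{-1}(\triangledown)\subset\bp^{-1}(1)$ is preserved; a $\diamond$-component and a cardinality-one $\nil$-component (an element of $\Gamma(\ga)$) never yield $\bp(m+1)=1$, so $\bp^{-1}(1)\cap\Gamma(\ga)=\emptyset$ is preserved. This closes the induction.

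\emph{Main obstacle.} No new analytic estimate is needed; everything rests on Propositions~\ref{prop:one-fold-bound} and~\ref{prop:boundIntegralContr}, Lemma~\ref{lem:TV}, and the fact that $\Nm_\eps$ satisfies Assumption~\ref{a:Martingales}. The real work is the combinatorial bookkeeping: verifying that the three recursive cases of \eqref{eq:general_integral} produce precisely the three constrained families of functions $\bp$ in \eqref{eq:IterIntMomentBoundRenormalised} --- in particular that a $\diamond$-component generates no spurious $L^1_\eps$ term and a $\triangledown$-component no $L^2_\eps$ or $L^\infty_\eps$ term --- and that the exponents $\alpha_\ga$, $\beta_{\ga,p}$ and the error-factor product assemble correctly along the recursion. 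These are the same identities already established in the proof of Theorem~\ref{thm:IterIntMomentBound}, now merely carried along an extra $\triangledown/\diamond$ label.
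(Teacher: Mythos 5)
Your proof is correct and takes essentially the same approach as the paper's, which consists of the one-sentence remark that the argument is ``analogous to the proof of Theorem~\ref{thm:IterIntMomentBound}'' together with the observation that the $\triangledown$-constraint comes from the uniform bound on $\C_\eps$. You have merely fleshed out the induction that the authors left implicit --- in particular your preliminary observation extending \eqref{eq:BoundIntegral-error} to integrands built from the renormalised measures closes exactly the detail the paper glosses over.
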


\begin{proof}
The proof is analogous to the proof of Theorem~\ref{thm:IterIntMomentBound}, where we use the recursive definition \eqref{eq:general_integral} and the fact that the martingales $\Nm_\eps$ satisfy Assumption~\ref{a:Martingales}. The restriction $\L^{-1}(\triangledown) \subset \bp^{-1}(1)$ in the sum in \eqref{eq:IterIntMomentBoundRenormalised} follows from the uniform bound on the integral with the label $\triangledown$.
\end{proof}

\begin{remark}\label{rem:bound-like-Gaussian-2}
The same argument as in Remark~\ref{rem:bound-like-Gaussian} implies that as $\eps \to 0$ the non-vanishing expectations \eqref{eq:IterIntMomentBoundRenormalised} are those with contractions $\gamma$ having components of cardinalities at most $2$ and functions $\bp$ satisfying $\bp^{-1}(\infty) = \emptyset$ and $\bp^{-1}(2) = \Gamma(\ga)$.
\end{remark}

\section{Kernels given by generalised convolutions}
\label{sec:GeneralizedConvolutions}

In this section we prove moment bounds for the iterated integrals \eqref{eq:integral}, when the function $F$ is given by convolutions of singular kernels (similar to the one introduced in \cite[Appendix~A]{HQ18}). This type of kernels appears in canonical lifts of random noises in the theory of regularity structures. However, the result presented in this section is different from \cite{HQ18} because of two reasons: first, our noise is non-Gaussian, and second, we prove bounds on the stochastic integrals rather than on deterministic objects which appear after Wick contractions of Gaussian noises. Moment bounds for stochastic integrals driven by a general stationary non-Gaussian noise were proved in \cite{ChandraShen}. In the latter work, the authors generalised the framework of \cite[Appendix~A]{HQ18} which allowed them to deal with more general contractions of noises. In our setting, we need to use Theorem~\ref{thm:integral-bound-renorm}, which requires estimating more complicated norms of the functions, in contrast to the $L^2$ norms when the noise is Gaussian. Since we adjust the ideas of \cite[Appendix~A]{HQ18} to our framework, we equip our results and definitions with references to their analogues from this article.

We will work in the space $\R^{d+1}$ with the parabolic scaling $\s = (2, 1, \ldots, 1)$, where the first coordinate is time and the other $d$ coordinates are spatial. We denote $| \s | := 2 + d$, and $\Vert z \Vert_{\mathfrak{s}} := |t|^{1 / 2} + |x|$ for any $z = (t,x) \in \R^{d + 1}$, and $\lambda^{\s} z := (\lambda^{2} t, \lambda x)$. For a multi-index $k = (k_0, \ldots, k_d) \in \N_0^{d+1}$ we define $| k |_{\s} := 2 k_0 + \sum_{i=1}^d k_i$. Then we denote by $\SC_\s^r$ the space of function on $\R^{d+1}$ with bounded mixed derivatives of the scaled order not exceeding $r$.

It will be convenient to consider processes $\M_{\eps}(t, x)$ defined on the whole time line $\R$. For this, we denote by $\widetilde{\M}_{\eps}(t, x)$ an independent copy of $\M_{\eps}(t, x)$ and define  
\begin{equation}\label{eq:martingale-extension}
\M_{\eps}(t, x) := 
\begin{cases}
\M_{\eps}(t, x) &\text{for}~t \geq 0,\\
\widetilde{\M}_{\eps}(-t, x) &\text{for}~t < 0, 
\end{cases}
\end{equation}
for all $t \in \R$. Then the stochastic integral \eqref{eq:one-fold_integral} can be naturally extended as
\begin{equation}\label{eq:integral-wrt-extension}
	\int_{\R \times \Lambda_\eps} F(z)\, \d \bM_{\eps}(z) = \eps^d \sum_{x \in \Le} \int_{s = 0}^\infty F(s,x)\, \d \M_{\eps}(s, x) + \eps^d \sum_{x \in \Le} \int_{s = 0}^\infty F(-s,x)\, \d \widetilde{\M}_{\eps}(s, x).
\end{equation}
The multiple integrals developed in Section~\ref{sec:iterated-integrals} can be then naturally extended to whole $\R$ in the time variable. With a little ambiguity we will use the notation as in Section~\ref{sec:iterated-integrals} for the integral defined with respect to $\bM_{\eps}$ on $\R \times \Le$. 

Following the idea of \cite[Appendix~A]{HQ18}, it will be convenient to describe generalised convolutions using labelled graphs. More precisely, we consider a finite directed {\it graph} $\CCG = (\CCV,\CCE)$ with a set of vertices $\CCV$ and with edges $e \in \CCE$ labelled by pairs $(a_e, r_e) \in \R_+ \times \Z$. We assume that the graph is {\it weakly connected} and {\it loopless}, i.e. every vertex has either an outgoing or incoming edge, and there are no edges from a vertex to itself. We require $\CCG$ to contain a distinguished vertex $\STAR \in \CCV$, connected by an outgoing edge with exactly one other vertex, denoted by $v_\star^\uparrow \in \CCV \setminus \{\STAR\}$. We also allow $\STAR$ to have incoming edges, which by the loopless assumption above cannot come from $v_\star^\uparrow$. Finally, we assume that the graph contains a set $\CCV_{\!\var}$ of distinguished vertices, which can be empty and which satisfies $\STAR \notin \CCV_{\!\var}$, and if it is non-empty, then it has only outgoing edges (``$\var$'' stands for ``variables'' because these vertices correspond to the variables integrated in the stochastic integral). This implies that there are no edges connecting two vertices from $\CCV_{\!\var}$. In Figure~\ref{fig:graph} we provide an example of such graph $\CCG$, where we omit labels and use various decorations for nodes and edges.

We define the set $\CCV_{\!\bar\star} := \CCV \setminus \{\STAR\}$ and for a directed edge $e \in \CCE$ we write $e_+$ and $e_-$ for the two vertices such that $e = (e_-,e_+)$ is directed from $e_-$ to $e_+$. We make the following assumption on the labels of the edges.

\begin{assumption}\label{a:mainGraph}
	The described graph $\CCG$ has the following properties:
	\begin{enumerate}[topsep=2pt, itemsep=0pt]
		\item every edge $e$ containing $\STAR$ has $r_e = 0$;
		\item the edge $e = (\STAR, v_\star^\uparrow)$ has the label $(a_e, r_e) = (0,0)$;
		\item at most one edge with $r_e > 0$ may be incident to the same vertex;
		\item if there are two vertices $e_-$ and $e_+$ such that the edge $e=(e_-, e_+)$ has $r_e < 0$, then $e_-$ and $e_+$ have no other incident edge.
	\end{enumerate}
\end{assumption}

\begin{figure}[h] \centering
		\begin{tikzpicture} 
			[dot/.style={circle,fill=black!90,inner sep=0pt, minimum size=1.5mm},
				empty/.style={inner sep=0pt, minimum size=1.5mm},
				spine/.style={very thick},
				edge/.style={thick}]
			\begin{scope}
				\node[rectangle,draw=blue!50,rounded corners=3pt,minimum width=35mm, minimum height=5mm] at (0,2) {};

				\node at (0,0)  [root,label=below:$\STAR$] (zero) {};
				\node at (-1.5,1) [dot] (left) {};
				\node at (0.4,1) {$v_{\star}^\uparrow$};
				\node at (0,1) [dot] (up) {};
				\node at (1.5,1) [dot] (right) {};
				\draw[->,spine, color=testcolor] (zero) to node [sloped,below] {} (up);
				\draw[->,edge] (right) to node [sloped,below] {} (zero);
				\draw[->,edge] (left) to node [sloped,below] {} (up);

				\node at (-1.5,2) [var] (up_left) {};
				\node at (-2.1,2) {$\CCV_{\!\var}$};
				\node at (0,2) [var] (up_up) {};
				\node at (1.5,2) [var] (up_right) {};

				\draw[->,edge] (up_left) to node [sloped,below] {} (left);
				\draw[->,edge] (up_left) to node [sloped,below] {} (up);
				\draw[->,edge] (up_up) to node [sloped,below] {} (right);
				\draw[->,edge] (up_right) to node [sloped,below] {} (right);
			\end{scope}
		\end{tikzpicture}
		\caption{An example of the graph $\CCG$, where the green edge connects the distinguished vertices $\protect\STAR$ and $v_{\star}^{\uparrow}$. The white vertices are in $\CCV_{\!\var}$ and have only outgoing edges. The distinguished vertex $\protect\STAR$ has an incoming edge, but it cannot come from $v_{\star}^{\uparrow}$.}
		\label{fig:graph}
	\end{figure}
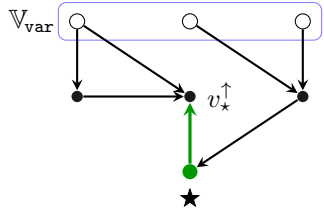

Let $|\CCV_{\!\var}| = n \geq 1$. Then we label the elements $\CCV_{\!\var}$ by $1, \ldots, n$, which gives a bijection between $\CCV_{\!\var}$ and $\set{n}$. Using the notation of Section~\ref{sec:contractions}, we write $\fC(\CCV_{\!\var})$ for the set $\fC(n)$ of all contractions on $\CCV_{\!\var}$. For a graph $\CCG = (\CCV,\CCE)$ and a contraction $\gamma \in \fC(\CCV_{\!\var})$ we define the multigraph (i.e. two vertices are allowed to be connected by multiple edges) $\CCG_\gamma = (\tilde \CCV, \tilde\CCE)$, with labels $(\tilde a_e, \tilde r_e)$, in the following way: the set of vertices $\tilde \CCV \subset \CCV$ is obtained from $\CCV$ by identifying those vertices from $\CCV_{\!\var}$ which belong to the same component in $\gamma$. We denote this ``identification'' by a surjective map $\i_\gamma : \CCV \to \tilde \CCV$. In particular, $\i_\gamma$ maps the vertices from $\CCV \setminus \CCV_{\!\var}$ (which includes $\STAR$) to themselves. We define $\tilde \CCV_{\!\var}$ to be the image of $\CCV_{\!\var}$ under the map $\i_\gamma$. Then we define the set of edges $\tilde \CCE$ on $\tilde \CCV$ to contain $\tilde e = (\i_\gamma(e_-), \i_\gamma(e_+))$ for all $e = (e_-, e_+) \in \CCE$, with the label $(a_{\tilde e}, r_{\tilde e}) = (a_e, r_e)$. In what follows, we call $\CCG_\gamma = (\tilde \CCV, \tilde \CCE)$\label{lab:contracted_graph} the {\it contracted (multi)graph} corresponding to $\CCG$ and $\gamma$. To consider renormalised integrals, we will use a labeling $\L$ of the components of the contraction $\gamma$, defined as in Section~\ref{sec:renormalized-integrals}. We by analogy with \eqref{eq:Gamma-set-new} we define the set of vertices  
\begin{equation}\label{eq:Gamma-sets-V}
\Gamma(\ga) := \{ v \in \tilde{\CCV}_{\!\var} :  |\i_\gamma^{-1}(v)| = 1 \} \cup \L^{-1}(\diamond),
\end{equation}
which correspond to the variables integrated with respect to martingales. Throughout this section we will use the shorthand $\Gamma = \Gamma(\ga)$ because the contraction $\ga$ will be always fixed.

A special case is $\CCV_{\!\var} = \emptyset$, in which all the definitions in the previous paragraph make sense for the identity contraction $\gamma$, and the contracted graph $\CCG_\gamma$ coincides with the original one $\CCG$. 

It will be useful to define a \emph{simple} (containing no multiedges) graph $(\hat \CCV, \hat \CCE)$, such that $\hat \CCV = \tilde \CCV$ and the unique edge $e \in \hat \CCE$ from $e_-$ to $e_+$ is obtained by contracting all edges $\tilde e$ from $e_-$ to $e_+$ in $\tilde \CCE$, with the label $(\hat a_e, r_e)$ of $e$ being the sum of the labels of all such parallel edges $\tilde e$. It follows from Assumption~\ref{a:mainGraph} that if there is more than one edge connecting $e_-$ to $e_+$ in $\CCG_\gamma$, then the value $r_e$ associated to the contracted edge is either $0$ (if all these edges $\tilde e \in \tilde \CCE$ have $r_{\tilde e} = 0$), or coincides with the only value $r_{\tilde e} > 0$, for $\tilde e \in \tilde \CCE$ connecting $e_-$ to $e_+$. We can have $r_e < 0$ only if there is a unique edge $\tilde e$ from $e_-$ to $e_+$ with $r_{\tilde e} < 0$.

For a subset $\bar\CCV \subset \CCV$ we define the outgoing edges $\CCE^\uparrow (\bar \CCV) := \{e \in  \CCE : e_- \in \bar\CCV\}$, incoming edges $ \CCE^\downarrow (\bar \CCV) := \{ e\in  \CCE : e_+ \in \bar\CCV\}$, internal edges $ \CCE_0 (\bar \CCV) := \{e \in  \CCE : e_{\pm} \in \bar \CCV\}$, and incident edges $ \CCE (\bar \CCV) := \{e \in  \CCE : e_{-} \in \bar \CCV \text{ or } e_{+} \in \bar \CCV\}$. If $\bar \CCV =  \CCV$, we simply write $ \CCE^\uparrow$, $ \CCE^\downarrow$, etc. Furthermore, we define the sets $\CCE_+ (\bar \CCV) := \{e \in  \CCE(\bar \CCV) :  r_e > 0\}$, $ \CCE_- (\bar \CCV) := \{e \in  \CCE(\bar \CCV) :  r_e < 0\}$, $ \CCE_+^\uparrow :=  \CCE_+ \cap  \CCE^\uparrow$ and $ \CCE_+^\downarrow :=  \CCE_+ \cap  \CCE^\downarrow$. These sets, defined for the edges $\tilde \CCE$ and $\hat \CCE$, will have the respective decorations. 

Then we require the contracted graph to satisfy the following assumption, which we state for the simple graph $(\hat \CCV, \hat \CCE)$ defined above. 

\begin{assumption}\label{a:mainContraction}
	The graph $\CCG = (\CCV,\CCE)$ and the contraction $\gamma \in \fC(\CCV_{\!\var})$ are such that the graph $(\hat \CCV, \hat \CCE)$, defined above, has the following properties:
	\begin{enumerate}[topsep=2pt, itemsep=0ex]
		\item\label{it:mainContraction-1} for any edge $e \in \hat\CCE$ one has $\hat a_e + (r_e \wedge 0) < |\s|$;
		\item\label{it:mainContraction-2} for every subset $\bar \CCV \subset \hat \CCV_{\!\bar\star}$ of cardinality at least $3$ one has
		      \begin{equation*}
			      \sum_{e  \in \hat\CCE_0(\bar \CCV)} \hat a_e < \Bigl(2 |\bar \CCV| - |\bar \CCV \cap \Gamma| - 1 - \1_{\bar \CCV \cap \Gamma = \emptyset} \Bigr) \frac{|\s|}{2};
		      \end{equation*}
		\item for every subset $\bar \CCV \subset \hat \CCV$ containing $\STAR$ of cardinality at least $2$ one has
		      \begin{equation*}
			      \sum_{e \in \hat \CCE_0 (\bar \CCV)} \hat a_e + \sum_{e \in \hat \CCE^{\uparrow}_+ (\bar \CCV)} \bigl(\hat a_e + r_e - 1\bigr) - \sum_{e \in \hat \CCE^{\downarrow}_+ (\bar \CCV)} r_e < \Bigl(2 |\bar \CCV| - |\bar \CCV \cap \Gamma| \Bigr) \frac{|\s|}{2};
		      \end{equation*}
		\item for every non-empty subset $\bar \CCV \subset \hat \CCV_{\!\bar\star} \setminus \{ v_{\star}^\uparrow \}$ one has
		      \begin{equation*}
			      \sum_{e \in \hat \CCE(\bar \CCV) \setminus \hat \CCE^{\downarrow}_+(\bar \CCV)} \hat a_e + \sum_{e \in \hat \CCE^{\uparrow}_+(\bar \CCV)} r_e - \sum_{e \in \hat \CCE^\downarrow_+(\bar \CCV)}( r_e - 1) > \Bigl(2 |\bar \CCV| - |\bar \CCV \cap \Gamma| \Bigr) \frac{|\s|}{2}.
		      \end{equation*}
	\end{enumerate}
\end{assumption}

\begin{remark}
Assumption~\ref{a:mainContraction} coincides with Assumption~3.17 in \cite{ChandraShen} on an ``elementary graph'',  where the set of ``external vertices'' (see Definition~3.13 in \cite{ChandraShen}) is given in our case by the set $\Gamma$.
\end{remark}

\subsection{Kernels associated to the graph}

Given a graph $\CCG = (\CCV,\CCE)$ as above, to each edge we associate a kernel and each vertex corresponds to a variable in the domain $\R^{d+1}$. Then, for $e \in \CCE$, the values $a_e$ will describe the order of singularity of the kernel associated to the edge $e$. The value $r_e$ will describe the order of renormalisation of this kernel. For every vertex $v \notin \CCV_{\!\var} \cup \{\STAR\}$ we assume to be given a measure $\mu^\eps_v$ on $\R^{d+1}$ of the form
\begin{equation}\label{eq:point-measure}
	\mu^\eps (\d z) = \eps^d \sum_{y \in \eps \Z^d}  \delta(y - x)\, \d t\, \d x, 
\end{equation}
where $z = (t, x)$ with $t \in \R$ and $x \in \R^{d}$, and where $\delta$ is the Dirac delta function on $\R^d$. This measure counts the points in the space lattice and is the Lebesgue measure in time.
Notice that, as $\eps \to 0$, the measure $\mu^{\eps}_v$ converges in the weak-$*$ topology to the Lebesgue measure on $\R^{d+1}$.

For each edge of the graph we associate a kernel with the following properties.

\begin{assumption}\label{a:Kernels}
 For every $e \in \CCE$ we consider a smooth\footnote{In all our applications it is sufficient to have kernels sufficiently many times differentiable. For example, we can take them to be in $\SC^q_\s$ for $q = \sum_{e \in \CCE} (|r_e| + 2)$.} kernel $K^{\eps}_e \colon \R^{d+1} \to \R$, which can be written as $K^{\eps}_e(z) = \sum_{n = 0}^{N} K^{\eps, n}_{e}(z)$ for $N = - \lfloor \log_2 \epsEH \rfloor$ and for some $\epsEH \in [\eps,1]$, where the smooth functions $\{K^{\eps, n}_{e}\}_{0 \leq n \leq N}$ have the following properties:
	\begin{enumerate}[topsep=2pt, itemsep=0ex]
		\item\label{a:DKernels_support} the function $K^{\eps, n}_{e}(z)$ is supported in $C_1 2^{-n} \leq \|z\|_\s \leq C_2 2^{-n}$ for some $0 < C_1 < C_2$;
		\item\label{a:DKernels_bounds} for any $q \geq 0$ and for some $C > 0$, independent of $\eps$ and $\epsEH$, one has
		      \begin{equation}\label{eq:Kn_bound}
			      |D^k K^{\eps, n}_{e}(z)| \leq C 2^{n(a_e + |k|_\s)}\;,
		      \end{equation}
		      uniformly in $z$, $|k|_\s \leq q$ and $0 \leq n \leq N$;
		\item\label{a:DKernels_poly} if $r_e < 0$, then for all $0 \leq n \leq N$ and $|k|_\s < |r_e|$ one has 
		\begin{equation}\label{eq:Kn_killer}
			\int_{\R^{d+1}} z^k K^{\eps, n}_{e}(z) \mu_{e_+}^{\eps}(\d z) = 0.
		\end{equation}
	\end{enumerate}	
\end{assumption}

The necessity to introduce a new parameter $\epsEH$ can be seen in our application in Section~\ref{sec:applicationStochQuant}, where the mesh size of the grid is $\eps$ and the interaction range is defined on the scale $\eps^{\frac{3}{4}} \geq \eps$.

We see from \eqref{eq:Kn_bound} that the value $a_e$ characterizes the order of singularity of the kernel. Moreover, the value $r_e$, assigned to an edge $e \in \CCE$, describes a renormalisation of the singularity, which for positive and negative values are defined in different ways in the following section.

\begin{lemma}\label{lem:K_bound}
	 If Assumption~\ref{a:Kernels} is satisfied, then for any $q \geq 0$, the following quantity is bounded uniformly in $\epsEH \in [\eps,1]$ and $\eps \in (0, 1]$
	\begin{equation}\label{eq:K_bound}
					\| K^{\eps}_e \|^{(\epsEH)}_{a_e; q} := 
					\sup_{z \in \R^{d+1}} \sup_{|k|_\s < q} (\|z\|_\s + \epsEH)^{a_e + |k|_\s} |D^k K^{\eps}_{e}(z)|.
	\end{equation}
	The reverse statement if also true, i.e. if for a kernel $K^{\eps}_e$ the quantity \eqref{eq:K_bound} is bounded uniformly, then it has all the properties listen in Assumption~\ref{a:Kernels}. 
\end{lemma}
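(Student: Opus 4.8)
The plan is to prove the two implications of Lemma~\ref{lem:K_bound} separately; the forward one (decomposition $\Rightarrow$ norm bound) is a short dyadic summation, while the reverse one requires a Littlewood--Paley type construction and is where the care is needed.

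\emph{From the decomposition to the norm bound.} Suppose $K^\eps_e=\sum_{n=0}^N K^{\eps,n}_e$ satisfies Assumption~\ref{a:Kernels}. Fix $z\in\R^{d+1}$ and a multi-index $k$ with $|k|_\s<q$. By the support property~(\ref{a:DKernels_support}), $D^k K^{\eps,n}_e(z)$ can be nonzero only for those $n$ with $C_1 2^{-n}\le\Vert z\Vert_\s\le C_2 2^{-n}$, and there are at most $\lceil\log_2(C_2/C_1)\rceil+1$ such $n$ --- a number independent of $\eps$, $\epsEH$ and $z$ --- each satisfying $2^{-n}\asymp\Vert z\Vert_\s$. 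Hence by \eqref{eq:Kn_bound}, $|D^k K^{\eps,n}_e(z)|\le C 2^{n(a_e+|k|_\s)}\lesssim\Vert z\Vert_\s^{-(a_e+|k|_\s)}$ with a constant depending only on $C_1,C_2,a_e,q$, and summing over the finitely many contributing scales gives $|D^k K^\eps_e(z)|\lesssim\Vert z\Vert_\s^{-(a_e+|k|_\s)}$. Moreover the union of the supports of the $K^{\eps,n}_e$, $0\le n\le N$, lies in $\{C_1 2^{-N}\le\Vert z\Vert_\s\le C_2\}$, and since $2^{-N}=2^{\lfloor\log_2\epsEH\rfloor}\in(\epsEH/2,\epsEH]$ we have $\Vert z\Vert_\s\gtrsim\epsEH$ there; thus on this set $\Vert z\Vert_\s+\epsEH\asymp\Vert z\Vert_\s\lesssim 1$, so $(\Vert z\Vert_\s+\epsEH)^{a_e+|k|_\s}\asymp\Vert z\Vert_\s^{a_e+|k|_\s}$ uniformly. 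Multiplying the two estimates yields $(\Vert z\Vert_\s+\epsEH)^{a_e+|k|_\s}|D^k K^\eps_e(z)|\lesssim 1$, and taking the supremum over $z$ and $k$ gives the claimed uniform bound on $\Vert K^\eps_e\Vert^{(\epsEH)}_{a_e;q}$.

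\emph{From the norm bound to the decomposition.} Assume \eqref{eq:K_bound} is bounded uniformly. First I would fix, once and for all, a smooth parabolic partition of unity $\{\rho_n\}_{n\in\Z}$: $\rho_n(z)=\rho(2^{n\s}z)$ for a fixed smooth $\rho$ supported in an annulus $\{C_1\le\Vert z\Vert_\s\le C_2\}$ with $\sum_{n\in\Z}\rho_n(z)=1$ for $z\neq 0$, so that $|D^k\rho_n(z)|\lesssim 2^{n|k|_\s}$. Then set $K^{\eps,n}_e:=\rho_n K^\eps_e$ for $1\le n\le N-1$, $K^{\eps,0}_e:=\bigl(\sum_{n\le 0}\rho_n\bigr)K^\eps_e$ and $K^{\eps,N}_e:=\bigl(\sum_{n\ge N}\rho_n\bigr)K^\eps_e$, so that $K^\eps_e=\sum_{n=0}^N K^{\eps,n}_e$; property~(\ref{a:DKernels_support}) holds after relabelling $C_1,C_2$ (for $n=0$ using that $K^\eps_e$ is, as in applications and as also follows from the forward direction, supported in a ball of radius $O(1)$). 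For property~(\ref{a:DKernels_bounds}): on the support of $\rho_n$ one has $\Vert z\Vert_\s\asymp 2^{-n}$, and since $2^{-N}\asymp\epsEH$ and $n\le N$ also $\Vert z\Vert_\s+\epsEH\asymp\Vert z\Vert_\s\asymp 2^{-n}$, so the norm bound gives $|D^{k'}K^\eps_e(z)|\lesssim(\Vert z\Vert_\s+\epsEH)^{-(a_e+|k'|_\s)}\asymp 2^{n(a_e+|k'|_\s)}$ for $|k'|_\s<q$; combining this with $|D^{k'}\rho_n(z)|\lesssim 2^{n|k'|_\s}$ through the Leibniz rule $D^k(\rho_n K^\eps_e)=\sum_{k'\le k}\binom{k}{k'}D^{k'}\rho_n\,D^{k-k'}K^\eps_e$ and using $|k'|_\s+|k-k'|_\s=|k|_\s$ yields $|D^k K^{\eps,n}_e(z)|\lesssim 2^{n(a_e+|k|_\s)}$; the two extremal pieces $n=0$ and $n=N$ are handled identically, using $\Vert z\Vert_\s+\epsEH\asymp 1$ on the support of $\sum_{n\le 0}\rho_n$ and $\Vert z\Vert_\s+\epsEH\asymp\epsEH\asymp 2^{-N}$ on the support of $\sum_{n\ge N}\rho_n$. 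Finally, property~(\ref{a:DKernels_poly}), which matters only for $r_e<0$, is not detected by the norm \eqref{eq:K_bound}; when the moment conditions $\int z^k K^\eps_e\,\mu^\eps_{e_+}(\d z)=0$ for $|k|_\s<|r_e|$ are imposed on $K^\eps_e$, they are transferred to each dyadic piece by the standard device of subtracting at each scale a suitable polynomial multiple of a fixed bump function and telescoping the corrections through the scales, which modifies the $K^{\eps,n}_e$ only by terms still satisfying~(\ref{a:DKernels_support})--(\ref{a:DKernels_bounds}).

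\emph{Main obstacle.} The genuinely delicate point is the reverse direction: one must check that the two extremal scales $n=0$ and $n=N$ remain compatible with the support and derivative bounds of Assumption~\ref{a:Kernels} --- this is exactly where the identity $2^{-N}\asymp\epsEH$ and the (harmless) compact support of $K^\eps_e$ are used --- and, for edges with $r_e<0$, that the vanishing-moment property can be re-imposed scale by scale without spoiling the size bounds. Everything else is routine dyadic bookkeeping.
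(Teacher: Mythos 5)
Your proposal is correct and follows essentially the same path as the paper. The paper's proof is a two-line affair: the forward direction is ``a direct consequence of Assumption~\ref{a:Kernels}\eqref{a:DKernels_support}--\eqref{a:DKernels_bounds}'' (which is exactly your dyadic-summation argument, since at each point only $O(1)$ scales contribute and each is of the right size), and the reverse direction is delegated to \cite[Lemma~5.4]{HM18}, which is precisely the parabolic Littlewood--Paley construction you spell out, with the same treatment of the two extremal scales $n=0$ and $n=N$ via $2^{-N}\asymp\epsEH$. You have in effect unfolded the cited proof.

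Two small cautions, both of which you already half-flagged. First, the claim that compact support of $K^\eps_e$ ``follows from the forward direction'' is not a deduction from the hypothesis of the reverse implication: the uniform bound on $\Vert K^\eps_e\Vert^{(\epsEH)}_{a_e;q}$ controls only the decay rate, not the support. In the forward direction compact support is part of the hypothesis (via Assumption~\ref{a:Kernels}\eqref{a:DKernels_support}), and in the reverse direction it must simply be assumed, as it is in the paper's applications and implicitly in \cite[Lemma~5.4]{HM18}. Second, as you correctly observe, the vanishing-moment property \eqref{eq:Kn_killer} is genuinely not encoded in the norm \eqref{eq:K_bound}; the lemma's ``reverse statement'' should be read as asserting properties \eqref{a:DKernels_support}--\eqref{a:DKernels_bounds} only, with \eqref{a:DKernels_poly} arranged separately (by the telescoping correction you describe) when the global moment conditions hold. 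Neither point affects the substance of the argument; they merely reflect that the lemma is stated informally and that \eqref{eq:K_bound} is a seminorm that forgets the moment structure.
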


\begin{proof}
	The bound \eqref{eq:K_bound} is a direct consequence of Assumption~\ref{a:Kernels}\eqref{a:DKernels_support}-\eqref{a:DKernels_bounds}. The second part of the lemma follows by repeating the proof of \cite[Lemma~5.4]{HM18}.
\end{proof}

\subsubsection{Renormalisation}
\label{sec:renormalisation}

If $r_e \neq 0$, then the kernel corresponding to the edge $e$ requires renormalisation. For positive and negative values of $r_e$ the renormalisation is defined different. For $r_e > 0$ the renormalisation of the smooth kernel is required to get a sufficiently fast decay of the kernel at the origin. In the case $r_e < 0$ the renormalisation is required to make the kernel, with a very strong singularity at the origin, integrable. 

In the case $r_e > 0$, we define the renormalised kernel
\begin{equation}\label{e:defKhatn2}
	\hat K^{\eps}_e(z_{e_-}, z_{e_+}) := K^{\eps}_e(z_{e_+}-z_{e_-}) - \sum_{|k|_\s < r_e} {z_{e_+}^k \over k!} D^k K^{\eps}_e(-z_{e_-}),
\end{equation}
where the sum runs over all multi-indices $k \in \N_0^{d+1}$ such that $|k|_\s < r_e$. In the case $r_e = 0$ we simply define $\hat K^{\eps}_e(z_{e_-}, z_{e_+}) := K^{\eps}_e(z_{e_+}-z_{e_-})$. The positive renormalisation \eqref{e:defKhatn2} allows to define kernels, which have sufficiently fast polynomial decay at the diagonal $z_{e_-} = z_{e_+}$. This is the case when $K^{\eps}_e$ is smooth with uniformly bounded derivatives.

If $r_e < 0$, then for a smooth and compactly supported function $\phi$ on $\R^{d+1} \times \R^{d+1}$ we define the expansion
\begin{equation}\label{e:Taylor}
	(T_{r_e} \phi)(z_{e_-}, z_{e_+}) := \phi(z_{e_-}, z_{e_+}) - \sum_{|k|_\s < |r_e|} {(z_{e_+} - z_{e_-})^k \over k!} D_2^k \phi(z_{e_-}, z_{e_-}),
\end{equation}
where the sum runs over all multi-indices $k \in \N_0^{d+1}$ satisfying $|k|_\s < |r_e|$, and where $D_2^k$ is the multi-derivative in the second argument. Furthermore, we associate to $K^{\eps}_e$ the distribution
\begin{align}\label{e:defRen}
	\hat{K}^{\eps}_e(\phi) := \int_{\R^{d+1}} \int_{\R^{d+1}} \!K^{\eps}_e(z_{e_+} - z_{e_-}) (T_{r_e} \phi)(z_{e_-}, z_{e_+})\,\mu^{\eps}_{e_-}(\d z_{e_-}) \mu^{\eps}_{e_+}(\d z_{e_+}),
\end{align}
which is obtained from $K^{\eps}_e$ by subtracting delta-functions and their derivatives. Expression \eqref{e:defRen} is just another way to write the integral
\begin{equation*}
	\int_{\R^{d+1}} \int_{\R^{d+1}} \!K^{\eps}_e(z_{e_+} - z_{e_-}) \phi(z_{e_-}, z_{e_+})\,\mu^{\eps}_{e_-}(\d z_{e_-}) \mu^{\eps}_{e_+}(\d z_{e_+}),
\end{equation*}
since $\int_{\R^{d+1}} \!K^{\eps}_e(z) z^k\, \mu^{\eps}_{e_+}(\d z) = 0$ and the measure $\mu^{\eps}_{e_+}$ is translation invariant. 

\begin{example}
	In all of the applications that we have in mind, we deal with labels $r_e$ taking values $+1$, $0$ or $-1$. In Section~\ref{sec:applicationStochQuant}, for example, we have $r_e = 1$ only for the tree in Section \ref{sec:last_symbol}; we use negative renormalisation with $r_e = -1$ only for the tree which is dealt with in \eqref{eq:important_subtree}. All the other edges in the trees of Section~\ref{sec:applicationStochQuant} always have $r_e=0$.
	
	Clearly, when $r_e =0$, we have no transformation to do on the kernels. When $r_e = 1$, on the other hand, we have \[ \hat K^{\eps}_e(z_{e_-}, z_{e_+}) := K^{\eps}_e(z_{e_+}-z_{e_-}) - K^{\eps}_e(-z_{e_-}), \] while when $r_e = -1$, we get \[ \hat K^{\eps}_e(z_{e_-}, z_{e_+}) := K^{\eps}_e(z_{e_+}-z_{e_-}) - \left( \int_{\R^{d+1}} K^{\eps}_e(z_{e_+} - z_{e_-}) \mu^{\eps}_{e_+}(\d z_{e_+}) \right) \delta(z_{e_+}-z_{e_-}), \] where $\delta$ is the Dirac delta-function. Observe that positive renormalisation corresponds to subtracting the value of the kernel itself at the ``base'' point $z_{e_{-}}$, while negative renormalisation means removing singularities at the base point.
\end{example}

\subsubsection{A generalised convolution}

Let us fix a graph $\CCG = (\CCV,\CCE)$ as described above. Then for a smooth and compactly supported function $\phi : \R^{d+1} \to \R$, for $z = (t, x), \bar z = (\bar t, \bar x) \in \R^{d+1}$ and for $\lambda \in (0,1]$ we define its rescaling and recentering
\begin{equation}\label{eq:test-function}
	\phi^\lambda_{\bar z}(z) := \lambda^{-|\s|} \phi \bigl(\lambda^{-2}(t - \bar t), \lambda^{-1}(x - \bar x)\bigr).
\end{equation}
For fixed $\bz^{\var} \in (\R^{d+1})^{\CCV_{\!\var}}$ we define the product measure on $\bz \in (\R^{d+1})^{\CCV_{\!\bar\star}}$
\begin{equation}\label{e:mu_def}
	\mu_{\CCV_{\!\bar\star}, \bz^{\var}}^{\eps}(\d \bz) := \Bigl(\prod_{v \in \CCV_{\!\bar\star} \setminus \CCV_{\!\var}} \mu_v^{\eps}(\d z_v)\Bigr) \Bigl(\prod_{w \in \CCV_{\!\var}} \delta(z_w - z^{\var}_w) \d z_w\Bigr),
\end{equation}
where again $\delta$ is the Dirac delta-function, and where $\bz = (z_v \in \R^{d+1} : v \in \CCV_{\!\bar\star})$ and $\bz^{\var} = (z^{\var}_v \in \R^{d+1} : v \in \CCV_{\!\var})$. In other words, the variable $z_v$, corresponding to the vertex $v \in \CCV_{\!\bar\star} \setminus \CCV_{\!\var}$, is integrated with respect to the measures $\mu_v^{\eps}$, and the variables corresponding to the vertices in $\CCV_{\!\var}$ are fixed to be equal to $\bz^{\var}$. These are the variables which we want to integrate with respect to martingales. Then we define the \emph{generalised convolution}
\begin{equation}\label{e:genconv}
	\CK^{\lambda, \eps}_\CCG(\bz^{\var}) := \int_{(\R^{d+1})^{\CCV_{\!\bar\star}}} \Bigl( \prod_{e\in \CCE}{\hat K}^{\eps}_e(z_{e_-}, z_{e_+})\Bigr) \phi^\lambda_0(z_{v_{\star}^\uparrow}) \mu_{\CCV_{\!\bar\star}, \bz^{\var}}^{\eps}(\d \bz).
\end{equation}
Since the kernels $K^{\eps}_e$ are smooth, our assumptions on the graph guarantee that the generalised convolution \eqref{e:genconv} is well-defined.

We fix any order of the elements in $\CCV_{\!\var}$ (which respectively fixed the order of the variables in $\CK^{\lambda, \eps}_\CCG(\bz^{\var})$) and we define 
\begin{equation} \label{eq:def_integrals_kernels}
	(\CI^{\eps, \L}_{\gamma} \CK^{\lambda, \eps}_\CCG )_t := \sum_{\sigma \in \Sigma_\ga} (\CI^{\eps, \L}_{\gamma, \sigma} \CK^{\lambda, \eps}_\CCG )_t,
\end{equation}
where the stochastic integral $\CI^{\eps, \L}_{\gamma, \sigma}$ is defined in Section~\ref{sec:renormalized-integrals} with respect to the fixed order of the variables. The following is our main result of this section.

\begin{theorem}\label{thm:convolutions}
	Let $\CCG=(\CCV,\CCE)$ be a graph with labels $\{a_e,r_e\}_{e\in \CCE}$ satisfying Assumption~\ref{a:mainGraph}, let $\gamma \in \fC_m(\CCV_{\!\var})$, with $1 \leq m \leq |\CCV_{\!\var}|$, be a contraction with a labeling $\L$ such that Assumption~\ref{a:mainContraction} is satisfied. Let the measures be defined as in \eqref{eq:point-measure} and let the kernels satisfy Assumption~\ref{a:Kernels}. 
	Let furthermore $\CI^{\eps, \L}_{\gamma}$ be a stochastic integral with respect to \cadlag martingales satisfying Assumption~\ref{a:Martingales}, let the set $\Gamma$ be defined in \eqref{eq:Gamma-sets-V}, and let 
	\begin{equation}\label{eq:alpha-gamma}
		\nu_\gamma := |\s| |\hat \CCV_{\!\bar\star} \setminus \{ \hat{v}^\uparrow_{\!\star} \}| - \frac{|\s|}{2} | \Gamma|  - \sum_{e \in \hat \CCE} \hat a_e < 0. 
	\end{equation}
	Then for any $p \geq 2$ and $\kappa > 0$ there is a constant $C$ for which the following bound holds
	\begin{align}\label{e:genconvBound}
		\biggl(\E \biggl[\sup_{t \in \R_+} \bigl| (\CI^{\eps, \L}_{\gamma} \CK^{\lambda, \eps}_\CCG)_t \bigr|^p\biggr]\biggr)^{\frac{1}{p}} &\le C (\lambda \vee \epsEH)^{\nu_\gamma} \sum_{\substack{\bp \in \SP_{\!m} :\, \bp^{-1}(1) \cap \Gamma = \emptyset, \\ \L^{-1}(\triangledown) \subset \bp^{-1}(1),\, \bp^{-1}(\infty) = \emptyset}} \eps^{\alpha_\ga(\bp)} \epsEH^{- \delta_{\gamma}(\bp)} \\
		&\qquad + C (\lambda \vee \epsEH)^{\nu_\gamma} \sum_{\substack{\bp \in \SP_{\!m} :\, \bp^{-1}(1) \cap \Gamma = \emptyset, \\ \L^{-1}(\triangledown) \subset \bp^{-1}(1),\, \bp^{-1}(\infty) \neq \emptyset}} \eps^{\alpha_\ga(\bp) - \kappa} \epsEH^{- \delta_{\gamma}(\bp)} \nonumber
	\end{align}
	uniformly in $\lambda \in (0,1]$, $\epsEH \in [\eps, 1]$ and $\eps \in (0,1]$, where the set of functions $\SP_{\!m}$ is defined in Section~\ref{sec:GeneralBounds}, the constant $\alpha_\gamma(\bp)$ is defined in \eqref{eq:alpha-power}, and
	\begin{equation}\label{eq:delta-gamma-p}
	\delta_{\gamma}(\bp) := \frac{|\s|}{2} \biggl( 2 |\bp^{-1}(\infty) \setminus \Gamma | + |\bp^{-1}(\infty) \cap \Gamma | + |\bp^{-1}(2) \setminus \Gamma |\biggr).
	\end{equation}
\end{theorem}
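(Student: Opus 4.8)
The plan is to reduce this statement to an application of Theorem~\ref{thm:integral-bound-renorm}, the moment bound for renormalised iterated integrals, by showing that the generalised convolution $\CK^{\lambda,\eps}_\CCG$, viewed as a function of the $|\CCV_{\!\var}|$ variables $\bz^{\var}$, satisfies good bounds in the norms $\Vert \cdot \Vert_{L^{\bp}_\eps}$ and $\Vert \cdot \Vert_{\SC^1_{s_i}(L^\infty_\eps)}$ that appear on the right-hand side of \eqref{eq:IterIntMomentBoundRenormalised}. Concretely, the first step is to invoke Theorem~\ref{thm:integral-bound-renorm} directly with $F = \CK^{\lambda,\eps}_\CCG$ and then to control each norm $\Vert (\CK^{\lambda,\eps}_\CCG)^{\ga,\sigma} \Vert_{L^{\bp}_\eps}$ appearing there, together with the $\SC^1$-type norms that carry the harmless $\eps^{-\kappa_{\ga,i}(\bp)}$ prefactors. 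The key point is that these mixed $L^1/L^2/L^\infty$ norms of a generalised convolution can be estimated by the multiscale decomposition of \cite[Appendix~A]{HQ18}, adapted as in \cite{ChandraShen}: one decomposes each kernel $K^\eps_e = \sum_n K^{\eps,n}_e$ into dyadic pieces, assigns a scale to each vertex, and sums over scale assignments, with the combinatorial bookkeeping governed by Assumption~\ref{a:mainContraction}. Since Assumption~\ref{a:mainContraction} is stated to coincide with Assumption~3.17 of \cite{ChandraShen} with the set of external vertices given by $\Gamma$, the resulting bound on the integral over the $\CCV_{\!\bar\star}$-variables (with the $\Gamma$-variables held fixed) is exactly of the form $(\lambda \vee \epsEH)^{\nu_\gamma}$ times a polynomial weight in the fixed points, where $\nu_\gamma$ is the scaling exponent in \eqref{eq:alpha-gamma}; the strict negativity $\nu_\gamma < 0$ is what makes the sup over $t\in\R_+$ and the rescaling by $\phi^\lambda_0$ admissible.

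The second step is the accounting of the $\eps$- and $\epsEH$-powers. Each norm $\Vert \cdot \Vert_{L^{\bp}_\eps}$ is an iterated norm in which the variable $z_i$ is measured in $L^1$, $L^2$, or $L^\infty$ according to $\bp(i)$; converting an $L^\infty$ norm in a space-time variable into the pointwise kernel bound costs the loss of a spatial $\eps^d$-weight and, crucially, an additional negative power of $\epsEH$ coming from the fact that kernels behave like $(\Vert z\Vert_\s + \epsEH)^{-a_e}$ near the origin rather than being genuinely bounded --- this is the source of the exponent $\delta_\gamma(\bp)$ in \eqref{eq:delta-gamma-p}. One checks that an $L^2$-variable not in $\Gamma$ contributes $\tfrac{|\s|}{2}$, an $L^\infty$-variable in $\Gamma$ contributes $\tfrac{|\s|}{2}$, and an $L^\infty$-variable not in $\Gamma$ contributes $|\s|$, matching \eqref{eq:delta-gamma-p} term by term. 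The $\eps^{\alpha_\gamma(\bp)}$ prefactor is produced automatically by Theorem~\ref{thm:integral-bound-renorm}; the extra $\eps^{-\kappa}$ in the second sum absorbs, for $p$ chosen large, the product of divergent factors $\prod_i \eps^{-\kappa_{\ga,i}(\bp)/p}$ together with the $\SC^1$-norms $\Vert \CK^{\lambda,\eps}_\CCG \Vert^{1/p}_{\SC^1_{s_i}(L^\infty_\eps)}$ (which blow up only polynomially in $\eps^{-1}$, since differentiating in a time variable costs one extra dyadic factor per the parabolic scaling), exactly as in Remark~\ref{rem:bound-like-Gaussian}. One also has to observe that only $\bp$ with $\bp^{-1}(\infty)=\emptyset$ survive in the first sum because any $L^\infty$-variable forces $\alpha_\gamma(\bp)\geq d+\kone>0$, so it can be freely merged into the second sum at the cost of the $\eps^{-\kappa}$; this is why the two sums are split the way they are in \eqref{e:genconvBound}.

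The main obstacle is the multiscale estimate of the mixed-norm quantity $\Vert (\CK^{\lambda,\eps}_\CCG)^{\ga,\sigma} \Vert_{L^{\bp}_\eps}$ uniformly in $\eps$, $\epsEH$ and $\lambda$ --- in particular verifying that the four conditions of Assumption~\ref{a:mainContraction} are precisely what is needed to close the sum over dyadic scale assignments when some external legs are integrated in $L^1$ or $L^2$ rather than merely evaluated, and when the renormalisations $\hat K^\eps_e$ (positive Taylor subtraction for $r_e>0$, negative/distributional renormalisation for $r_e<0$) are in force. This requires importing the power-counting lemmas from \cite[Appendix~A]{HQ18} and their generalisation in \cite{ChandraShen}, checking that the point measures $\mu^\eps_v$ in \eqref{eq:point-measure} (which differ from Lebesgue measure only at scales below $\eps$) do not spoil the bounds --- here one uses that $\eps \le \epsEH$, so the discreteness is always dominated by the regularisation scale --- and carefully tracking how the presence of the distinguished edge $(\STAR, v^\uparrow_\star)$ and the rescaled test function $\phi^\lambda_0$ enter, via conditions (3) and (4) of Assumption~\ref{a:mainContraction}. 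Once that estimate is in hand, plugging it into Theorem~\ref{thm:integral-bound-renorm} and collecting exponents is routine bookkeeping.
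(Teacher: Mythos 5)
Your strategy coincides with the paper's proof in Section~\ref{sec:proof_bound}: apply Theorem~\ref{thm:integral-bound-renorm}, estimate the resulting mixed $L^{\bp}_\eps$-norms of the generalised convolution via the multiscale decomposition of \cite{HQ18}/\cite{ChandraShen} (which is exactly what Lemmas~\ref{lem:KTildeBound}, \ref{lem:eta-satisfies} and \ref{lem:bound-for-eta} carry out, including the transfer of the negative renormalisations onto the $\tilde K^{\eps,\n}$ kernels), and absorb the $\SC^1$-norm divergences into $\eps^{-\kappa}$ for $p$ large. The only organisational difference is the ordering --- the paper decomposes $\CK^{\lambda,\eps}_\CCG$ into localised pieces $\tilde\CK^{\eps,\n}_{\CCG_\gamma}$ \emph{first} and then applies the moment bound to each piece, whereas you apply the moment bound once and then decompose the norms --- but this is cosmetic, and your accounting of $\alpha_\gamma(\bp)$, $\delta_\gamma(\bp)$ and the split of the two sums matches the paper's.
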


We prove this theorem in Section~\ref{sec:proof_bound}, and before that we need to get some preliminary results. 

\begin{remark}
From the proof of Theorem~\ref{thm:convolutions} we can see that there exists a value $q \geq 0$ and a compact set $\fK \subset \R^{d+1}$, such that the constant $C$ in \eqref{e:genconvBound} is proportional to
\begin{equation*}
\Bigl(\prod_{e \in \CCE} \| K^{\eps}_e \|^{(\epsEH)}_{a_e; q} \Bigr) \Bigl(\prod_{v \notin \CCV_{\!\var} \cup \{\star\}} \| \mu^{\eps}_{v} \|_{\TV(\fK)} \Bigr),
\end{equation*}
which by our assumptions is bounded uniformly in $\eps$ and $\epsEH$. For example, we can take a very rough value $q = \sum_{e \in \CCE} (|r_e| + 2)$.
\end{remark}

If we would like to consider a recentered test function $\phi^\lambda_{\bar z}(z)$, we need to shift respectively all the variables in the generalised convolution:
\begin{equation}\label{e:genconv_shift}
	\CK^{\lambda, \eps}_{\CCG, \bar z}(\bz^{\var}) := \int_{(\R^{d+1})^{\CCV_{\!\bar\star}}} \Bigl(\prod_{e\in \CCE}{\hat K}^{\eps}_e(z_{e_-} - \bar z, z_{e_+} - \bar z)\Bigr) \phi^\lambda_{\bar z}(z_{v_{\star}^\uparrow}) \mu_{\CCV_{\!\bar\star}, \bz^{\var}}^{\eps}(\d \bz).
\end{equation}
Then the following result can be proved as Theorem~\ref{thm:convolutions}, by changing the value of the variable $z_{\star}$ from $0$ to $\bar z$. Uniformity in $\bar z$ holds, because the norms of the kernels \eqref{e:wantedBound4} are independent of this variable. 

\begin{corollary}\label{cor:convolutions}
Under the assumptions of Theorem~\ref{thm:convolutions}, the bound \eqref{e:genconvBound} holds for the multiple integral $\CI^{\eps, \L}_\gamma \CK^{\lambda, \eps}_{\CCG, \bar z}$, locally uniformly in $\bar z$. 
\end{corollary}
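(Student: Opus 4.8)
The plan is to re-run the proof of Theorem~\ref{thm:convolutions} (given in Section~\ref{sec:proof_bound}) verbatim, carrying the centering point $\bar z$ through as an inert parameter and checking at each step that the bounds produced are uniform over $\bar z$ in a prescribed compact set. As in \eqref{eq:def_integrals_kernels} it suffices to treat each $\CI^{\eps, \L}_{\gamma,\sigma}\CK^{\lambda,\eps}_{\CCG, \bar z}$ separately, the sum over $\sigma \in \Sigma_\ga$ being finite. The recentered convolution \eqref{e:genconv_shift} differs from \eqref{e:genconv} only in that the test bump $\phi^\lambda_0(z_{v_\star^\uparrow})$ is replaced by $\phi^\lambda_{\bar z}(z_{v_\star^\uparrow})$ and each renormalised kernel is evaluated at the shifted pair $(z_{e_-}-\bar z, z_{e_+}-\bar z)$ rather than at $(z_{e_-}, z_{e_+})$; in particular, for an edge with $r_e = 0$ the recentered kernel is \emph{literally unchanged}, since $\hat K^\eps_e(z_{e_-}-\bar z, z_{e_+}-\bar z) = K^\eps_e(z_{e_+}-z_{e_-})$, and for $r_e \neq 0$ the subtraction in \eqref{e:defKhatn2}/\eqref{e:Taylor} is simply performed with base point $\bar z$ in place of $0$, so the Taylor-remainder estimates underlying the multiscale bounds hold with exactly the same structure and with constants independent of $\bar z$.

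Next I feed this into the two-layer structure of the proof of Theorem~\ref{thm:convolutions}. The outer layer is Theorem~\ref{thm:integral-bound-renorm}, which estimates $\E_p\sup_{t\in\R_+}|(\CI^{\eps,\L}_{\gamma,\sigma}F)_t|$ for \emph{any} $F$ in terms of the norms $\Vert F^{\ga,\sigma}\Vert_{L^{\bp}_\eps}$ and $\Vert F^{\ga,\sigma}\Vert_{\SC_{s_i}^1(L^\infty_\eps)}$; this layer is blind to the identity of $F$ and is therefore unchanged upon taking $F = \CK^{\lambda,\eps}_{\CCG, \bar z}$. The inner layer bounds precisely those norms of $F = \CK^{\lambda,\eps}_{\CCG, \bar z}$ (and of $\partial_{s_i} F$) by the \cite{HQ18}-type multiscale analysis, using the graph/contraction hypotheses (Assumptions~\ref{a:mainGraph}, \ref{a:mainContraction}, which involve neither the kernels' size nor $\bar z$), the kernel norms $\Vert K^\eps_e\Vert^{(\epsEH)}_{a_e;q}$ from Lemma~\ref{lem:K_bound} (computed from $K^\eps_e$ as a function of a single increment, hence invariant under the recentering), and the total variations $\Vert\mu^\eps_v\Vert_{\TV(\fK)}$ of the point measures on a fixed compact $\fK$. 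The one place where $\bar z$ genuinely enters is through the domains over which the non-variable vertices are integrated: after the substitution $w_v := z_v - \bar z$ at those vertices, the renormalised kernels $\hat K^\eps_e$ and the vanishing-moment condition \eqref{eq:Kn_killer} are used in their original, un-shifted form, and the only residual effect of $\bar z$ is a translation of each point measure $\mu^\eps_v$, which affects the estimates solely via $\Vert\mu^\eps_v(\,\cdot\,+\bar z)\Vert_{\TV(\fK)} = \Vert\mu^\eps_v\Vert_{\TV(\fK+\bar z)} \le \Vert\mu^\eps_v\Vert_{\TV(\fK')}$ for any fixed compact $\fK'$ containing $\fK+\bar z$ over the given range of $\bar z$; this last quantity is bounded uniformly in $\eps\in(0,1]$ since $\mu^\eps_v$ assigns mass $\eps^d$ to an $\eps$-net and Lebesgue mass in time. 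Hence the norm bounds on $\CK^{\lambda,\eps}_{\CCG, \bar z}$, and with them the estimate \eqref{e:genconvBound}, hold with a constant $C$ depending on $\bar z$ only through shift-stable quantities, i.e. uniformly over $\bar z$ in any fixed compact set. That is the asserted local uniformity, with $\nu_\gamma$, $\alpha_\ga(\bp)$ and $\delta_\gamma(\bp)$ the same as in Theorem~\ref{thm:convolutions}.

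The step I expect to be the main obstacle is precisely the bookkeeping around translation: one must confirm that no estimate in the proof of Theorem~\ref{thm:convolutions} uses the specific position $0$ of the test function in an essential way — in particular that the cancellations coming from positive renormalisation (decay of $\hat K^\eps_e$ near the diagonal) and from negative renormalisation (integrability via \eqref{eq:Kn_killer}) are formulated relative to the edge's own endpoints and the un-translated point measures, not relative to the global origin, so that they are unaffected by moving $\phi^\lambda_0$ to $\phi^\lambda_{\bar z}$. Once this is verified the remaining changes are purely notational, and the bound follows exactly as before; the same argument applied to $\CK^{\lambda,\eps}_{\CCG,\bar z}$ gives Corollary~\ref{cor:convolutions}.
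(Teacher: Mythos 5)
Your proposal is correct and follows the same route as the paper's own (very brief) argument: the paper simply re-runs the proof of Theorem~\ref{thm:convolutions} with the base variable $z_\star$ set to $\bar z$ instead of $0$, and observes that uniformity in $\bar z$ follows because the kernel norms entering \eqref{e:wantedBound4} are translation-invariant. Your elaboration — that the outer layer (Theorem~\ref{thm:integral-bound-renorm}) is agnostic to the identity of $F$, that $r_e = 0$ kernels are literally unchanged, that the $r_e \neq 0$ Taylor subtractions simply shift base point, and that the point-measure total variations are shift-stable — correctly fills in why those norms are $\bar z$-independent, so it is a more detailed version of the same proof rather than a different one.
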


Applying Minkowski inequality, we get from the definition \eqref{eq:def_integrals_kernels} the bound 
\begin{equation}\label{eq:integral-and-sigmas}
\E_p \sup_{t \in \R_+} \bigl| (\CI^{\eps, \L}_{\gamma} \CK^{\lambda, \eps}_\CCG)_t \bigr| \leq \sum_{\sigma \in \Sigma_\ga} \E_p\sup_{t \in \R_+} \bigl| (\CI^{\eps, \L}_{\gamma, \sigma} \CK^{\lambda, \eps}_\CCG)_t \bigr|.
\end{equation}
In the rest of the section we are going to prove the bound \eqref{e:genconvBound} for the integral $\CI^{\eps, \L}_{\gamma, \sigma}$ with a fixed $\sigma$. One can see from the proof, that this bound is independent of the order of the variables (although the order plays a role in some intermediate results like Lemma~\ref{lem:KTildeBound}), and the same bound \eqref{e:genconvBound} holds for every integral $\CI^{\eps, \L}_{\gamma, \sigma}$ in \eqref{eq:integral-and-sigmas}.

\subsection{Multiscale decomposition of the generalised convolution}
\label{sec:decomp}

Our aim is to write the kernels $K_e^{\eps}$ in the generalised convolution \eqref{e:genconv} as sums of localised functions. For the edge $(\star, v_{\star}^\uparrow)$, we view the test function $\phi^\lambda_0(z_{v_{\star}^\uparrow})$ in \eqref{e:genconv} as a new kernel $K_{(\star, v_{\star}^\uparrow)}(z_{v_{\star}^\uparrow})$, supported on $\| z_{v_{\star}^\uparrow} \|_\s \lesssim \lambda$ and satisfying $\| K_{(\star, {v_{\star}^\uparrow})} \|_{0; q} \lesssim \lambda^{-|\s|}$ (recall that this edge has the labels $a_e = r_e = 0$ in the graph).

Our next aim is to decompose the kernels in \eqref{e:genconv} into sums of localised functions. To this end, for $e \in \CCE$ with $r_e > 0$, we take any smooth functions $\psi^{(\eps, n)} : \R^{d+1} \to \R$, such that $\psi^{(\eps, n)}(z)$ is supported in $C_1 2^{- n} \leq \|z\|_\s \leq C_2 2^{-n}$ (where $C_1, C_2$ are from Assumption~\ref{a:Kernels}), scales as $2^{-n}$ and satisfies $\sum_{n = 0}^N \psi^{(\eps, n)}(z) = 1$ for all $z$. Let us denote for convenience $\N_{\leq N} := \{0,1,\ldots, N\}$.\label{lab:NN} Then for $r_e > 0$ and $\n = (k,p,m) \in \N_{\leq N}^3$ we set
\begin{equation}\label{e:defKhatn}
	\hat K_e^{\eps, \n}(z,\bar z) := \psi^{(\eps, k)}(\bar z-z) \psi^{(\eps, p)}(z) \psi^{(\eps, m)}(\bar z) \hat K_e^{\eps}(z,\bar z),
\end{equation}
where the kernel $\hat K^{\eps}_e$ has been defined in \eqref{e:defKhatn2}. For $\n \in \N_{\leq N}^3$ and $e \in \CCE$ such that $r_e \leq 0$, we define the function
\begin{equation*}
	\hat K_e^{\eps, \n}(z,\bar z) :=
	\begin{cases}
		K^{\eps, k}_e(\bar z - z), & \text{if}~ \n = (k,0,0),\; 0 \leq k \leq N, \\
		0,                         & \text{otherwise},
	\end{cases}
\end{equation*}
where we made use of the expansion of the kernel from Assumption~\ref{a:Kernels}.

For $\lambda \in (0,1]$ we define the set $\CN^{\epsEH}_{\lambda}$ of functions $\n \colon \CCE \to \N_{\leq N}^3$ satisfying $2^{-|\n_{(\star, v^{\uparrow}_{\star})}|} \le \lambda \vee \epsEH$, with $\n_{(\star, v^{\uparrow}_{\star})}$ being the evaluation of the function $\n$ on the edge $(\star, v^{\uparrow}_{\star})$. Then for a function $\n \in \CN^{\epsEH}_{\lambda}$ and a point $\bz = (z_v : v \in \CCV_{\!\bar \star})$, we define
\begin{equation}\label{def:kayhat}
	\hat K^{\eps, \n}(\bz) := \prod_{e \in \CCE} \hat K_e^{\eps, \n_e}(z_{e_-}, z_{e_+}),
\end{equation}
where $z_\star = 0$. Since the functions $\psi^{(\eps, n)}$ sum up to $1$ and since we consider the test function $\phi^\lambda_0$ as a kernel, one can rewrite the generalised convolution \eqref{e:genconv} as
\begin{equation}\label{e:bigsum}
	\CK^{\lambda, \eps}_\CCG(\bz^{\var}) := \sum_{\n \in \CN^{\epsEH}_{\lambda}} \CK^{\eps, \n}_\CCG(\bz^{\var}), \qquad  \CK^{\eps, \n}_\CCG(\bz^{\var}) := \int_{(\R^{d+1})^{\CCV_{\!\bar\star}}} \!\!{\hat K}^{\eps, \n}(\bz)\, \mu_{\CCV_{\!\bar\star}, \bz^{\var}}^{\eps}(\d \bz).
\end{equation}

Since we are interested in estimating the integrals $\CI^{\eps, \L}_{\gamma, \sigma} \CK^{\lambda, \eps}_\CCG$, we can exploit the fact that the integration variables $z_v$ in the kernel \eqref{e:bigsum}, for vertices $v$ belonging to the same component of $\gamma$, are equal. More precisely, we define the set $\CN^{\epsEH}_{\lambda, \gamma}$ in the same way as $\CN^{\epsEH}_{\lambda}$, but using the contracted graph $\CCG_\gamma = (\tilde \CCV, \tilde \CCE)$. Then for a function $\n \in \CN^{\epsEH}_{\lambda, \gamma}$ and a point $\bz = (z_v : v \in \tilde \CCV_{\!\bar \star})$, we define the kernel $\hat K^{\eps, \n}(\bz)$ as in \eqref{def:kayhat}, but with the product over $\tilde \CCE$. Furthermore, we define the measure $\mu_{\tilde \CCV_{\!\bar \star}, \bz^{\var}}^{\eps}$ on $(\R^{d+1})^{\tilde \CCV_{\!\bar\star}}$ by
\begin{equation}\label{eq:measure-mu-general}
	\mu_{\tilde \CCV_{\!\bar \star}, \bz^{\var}}^{\eps}(\d \bz) := \Bigl(\prod_{v \in \tilde \CCV_{\!\bar\star} \setminus \tilde \CCV_{\!\var}} \mu_v^{\eps}(\d z_v)\Bigr) \Bigl(\prod_{w \in \tilde \CCV_{\!\var}} \delta_{z_w - z^{\var}_{w_*}} \d z_w\Bigr),
\end{equation}
where $w_*$ is the first element (with respect to a chosen order of vertices) in $\i_\gamma^{-1}(w)$, and the map $\i_\gamma$ has been introduced in the beginning of this section. In other words, this measure identifies the variables in $\tilde \CCV_{\!\var}$ which correspond to the same component of $\gamma$. Then we define the kernel
\begin{equation}\label{e:Kernel_hat}
	\CK^{\eps, \n}_{\CCG_\gamma}(\bz^{\var}) := \int_{(\R^{d+1})^{\tilde \CCV_{\!\bar\star}}} \!\!{\hat K}^{\eps, \n}(\bz)\, \mu_{\tilde \CCV_{\!\bar \star}, \bz^{\var}}^{\eps}(\d \bz),
\end{equation}
and write the multiple stochastic integral as $\CI^{\eps, \L}_{\gamma, \sigma} \CK^{\lambda, \eps}_\CCG = \sum_{\n \in \CN^{\epsEH}_{\lambda, \gamma}} \!\!\CI^{\eps, \L}_{\gamma, \sigma} \CK_{\CCG_\gamma}^{\eps, \n}$. Using this expansion and applying Minkowski's inequality, we obtain the bound
\begin{equation}\label{e:wantedBound2}
	\E_p \sup_{t \in \R_+} \bigl| (\CI^{\eps, \L}_{\gamma, \sigma} \CK^{\lambda, \eps}_\CCG)_t \bigr| \leq \sum_{\n \in \CN^{\epsEH}_{\lambda, \gamma}} \E_p \sup_{t \in \R_+} \bigl|(\CI^{\eps, \L}_{\gamma, \sigma} \CK_{\CCG_\gamma}^{\eps, \n})_t \bigr|.
\end{equation}
Bounding a multiple integral of the generalised convolution boils down to bounding integrals in \eqref{e:wantedBound2} and summing over the functions $\n \in \CN^{\epsEH}_{\lambda, \gamma}$. This is what we do in the next sections, where, following the idea of \cite[Appendix~A.2]{HQ18}, we use a multiscale clustering in the sum over $\n$.

\subsection{Bounds on iterated integrals}
\label{sec:bounds-multiscale}

We associate to every point $\bz \in (\R^{d+1})^{\tilde \CCV}$ a rooted labelled binary tree $(T,\ell)$, such that $\|z_v - z_{w}\|_\s \sim 2^{-\ell_{v \wedge w}}$ and $\ell_{v \wedge w} \in \N_{\leq N}$, where $v \wedge w$ is the closest common ancestor of $v$ and $w$. Moreover, the labels $\ell$ satisfy $\ell_\v \ge \ell_\w$ whenever $\v \ge \w$, where $\v \ge \w$ means that $\w$ belongs to the shortest path from $\v$ to the root of the tree $T$. See \cite[Appendix~A.2]{HQ18} for construction of such tree and also for the terminology which we are going to use. Given a set of vertices $\tilde \CCV$, we denote by $\CCT^{\epsEH}(\tilde\CCV)$  the set of rooted labelled binary trees $(T,\ell)$ as above, which have $\tilde\CCV$ as their set of leaves. Denote furthermore by $\CCT_{\!\lambda}^\epsEH(\tilde\CCV)$ the subset of those labelled trees in $\CCT^{\epsEH}(\tilde\CCV)$ with the property that $2^{-\ell_{\star \wedge v^\uparrow_{\star}}} \le \lambda$. 

Our next aim is to write summation in \eqref{e:wantedBound2} over such labelled trees $(T, \ell)$ and then over those functions $\n$ which are close in some sense to the labeling $\ell$. To this end, for the constant\footnote{Our value of $c$ is different from the analogous value in \cite[Definition~A.8]{HQ18}, because the kernels $K^{\epsEH, n}_{e}$ from Assumption~\ref{a:Kernels} have a different support. The need to define $c$ in this way can be seen from the proof of \cite[Lemma~A.9]{HQ18}.} $c := (\log_2 |\tilde\CCV|+ |\log_2 C_1|) \vee |\log_2 C_2|$, where the constants $C_1, C_2$ are from Assumption~\ref{a:Kernels}, we define the set $\CN^{\epsEH}_\gamma(T, \ell)$ consisting of all functions $\n \colon \tilde \CCE \to \N_{\leq N}^3$ such that
\begin{enumerate}[topsep=2pt, itemsep=0ex]
	\item for every edge $e = (v,w)$ with $r_e \le 0$, one has $\n_e = (k,0,0) \in \N_{\leq N}^3$ with $|k - \ell_{v\wedge w}| \le c$,
	\item for every edge $e = (v,w)$ with $r_e > 0$, one has $\n_e = (k,p,m) \in \N_{\leq N}^3$ with $|k - \ell_{v\wedge w}| \le c$, $|p - \ell_{v\wedge \star}| \le c$, and $|m - \ell_{w\wedge \star}| \le c$.
\end{enumerate}
Then we have the following analogue of \cite[Lemma~A.9]{HQ18}, which is proved in exactly the same way.

\begin{lemma}\label{lem:from_n_to_tree}
	Let us fix a point $\bz^{\var} \in (\R^{d+1})^{\CCV_{\!\var}}$. Let $\n \colon \tilde\CCE \to \N_{\leq N}^3$ be such that the kernel $\CK^{\eps, \n}_{\CCG_\gamma}(\bz^{\var})$ defined in \eqref{e:Kernel_hat} does not vanish. Then there exists a labelled tree $(T, \ell) \in \CCT^{\epsEH}_{\!\lambda}(\tilde\CCV)$ such that $\n \in \CN^{\epsEH}_\gamma(T, \ell)$.
\end{lemma}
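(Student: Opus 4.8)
I would follow the argument of \cite[Lemma~A.9]{HQ18}, which is built on the canonical labelled-tree construction of \cite[Appendix~A.2]{HQ18}, and adapt it as follows. First I would use the hypothesis $\CK^{\eps,\n}_{\CCG_\gamma}(\bz^{\var}) \neq 0$ to pick a point $\bz = (z_v : v \in \tilde\CCV_{\!\bar\star})$ in the support of the integrand $\hat K^{\eps,\n}(\bz) = \prod_{e \in \tilde\CCE} \hat K_e^{\eps,\n_e}(z_{e_-}, z_{e_+})$ from \eqref{e:Kernel_hat}; adjoining $z_\star = 0$ and recalling that the measure \eqref{eq:measure-mu-general} fixes the coordinates labelled by $\tilde\CCV_{\!\var}$ to the components of $\bz^{\var}$, this produces a configuration $\bz \in (\R^{d+1})^{\tilde\CCV}$.

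Next I would read off distance constraints from the supports of the localised kernels. By Assumption~\ref{a:Kernels}\eqref{a:DKernels_support} (for the pieces $K^{\eps,k}_e$) and the supports of the partition-of-unity functions $\psi^{(\eps,\cdot)}$, non-vanishing of $\hat K_e^{\eps,\n_e}$ at $(z_{e_-},z_{e_+})$ forces: if $r_e \leq 0$ then $\n_e = (k,0,0)$ with $C_1 2^{-k} \leq \|z_{e_+} - z_{e_-}\|_\s \leq C_2 2^{-k}$; and if $r_e > 0$ then $\n_e = (k,p,m)$ with $C_1 2^{-k} \leq \|z_{e_+} - z_{e_-}\|_\s \leq C_2 2^{-k}$, $C_1 2^{-p} \leq \|z_{e_-} - z_\star\|_\s \leq C_2 2^{-p}$ and $C_1 2^{-m} \leq \|z_{e_+} - z_\star\|_\s \leq C_2 2^{-m}$, using $z_\star = 0$. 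Hence every component of every $\n_e$ coincides, up to the fixed constants $C_1,C_2$, with $-\log_2$ of a pairwise $\s$-distance among the coordinates of $\bz$. I would then apply the construction of \cite[Appendix~A.2]{HQ18} to the configuration $\bz \in (\R^{d+1})^{\tilde\CCV}$: it produces a rooted labelled binary tree $(T,\ell)$ with leaf set $\tilde\CCV$, labels $\ell$ in $\N_{\leq N}$ that are monotone along $T$ (i.e. $\ell_v \geq \ell_w$ whenever $v \geq w$), and such that $\|z_v - z_w\|_\s \sim 2^{-\ell_{v\wedge w}}$ with constants depending only on $d$ and $|\tilde\CCV|$, the failure of $\|\cdot\|_\s$ to be an ultrametric being compensated by chaining the triangle inequality through at most $|\tilde\CCV|$ vertices. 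Comparing the two descriptions of the $\n_e$, each component of $\n_e$ lies within $c := (\log_2|\tilde\CCV| + |\log_2 C_1|) \vee |\log_2 C_2|$ of the matching label $\ell_{e_-\wedge e_+}$, $\ell_{e_-\wedge\star}$ or $\ell_{e_+\wedge\star}$ --- this is exactly what the constant $c$ is designed to absorb, namely the kernel-support constants $C_1,C_2$ together with the $\log_2|\tilde\CCV|$ loss from the chaining --- so $\n \in \CN^\epsEH_\gamma(T,\ell)$.

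Finally I would verify $(T,\ell) \in \CCT^\epsEH_{\!\lambda}(\tilde\CCV)$: the tree has leaf set $\tilde\CCV$ with monotone labels in $\N_{\leq N}$ by construction, and since the kernel attached to the distinguished edge $(\star, v_\star^\uparrow)$ is the rescaled test function $\phi^\lambda_0$ of \eqref{eq:test-function}, supported in $\|z_{v_\star^\uparrow}\|_\s \lesssim \lambda$, we get $2^{-\ell_{\star\wedge v_\star^\uparrow}} \sim \|z_{v_\star^\uparrow} - z_\star\|_\s \lesssim \lambda$, which is the defining property of $\CCT^\epsEH_{\!\lambda}$. The one genuinely non-routine ingredient is the tree construction used in the second step, precisely because $\|\cdot\|_\s$ is not an ultrametric; this goes through verbatim as in \cite[Appendix~A.2]{HQ18}, and passing from $\CCG$ to the contracted multigraph $\CCG_\gamma$ introduces nothing new, since identified variables collapse to a single leaf of $T$ and parallel edges merely impose mutually consistent scale constraints.
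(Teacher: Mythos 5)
Your proof is correct and takes essentially the same approach as the paper. The paper's own proof is just a one-line statement that the result is "proved in exactly the same way" as \cite[Lemma~A.9]{HQ18}, and what you have written is a faithful unfolding of that argument: pick a configuration in the support of the integrand, read off dyadic distance constraints from the supports of the localised kernel pieces, feed the configuration into the HQ18 tree construction, and absorb the constant-factor slippage (from $C_1,C_2$ and from chaining the failure of $\|\cdot\|_\s$ to be an ultrametric) into the paper's choice of $c$ — including the footnoted adaptation that $c$ here differs from HQ18 because the kernel supports are different. Your closing remark that contraction to $\CCG_\gamma$ introduces nothing new because identified variables become a single leaf, with parallel edges imposing mutually consistent constraints, is also the right observation and is implicit in the paper's citation.
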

 
Using this result, the right-hand side of \eqref{e:wantedBound2} can be estimated as
\begin{equation}\label{e:wantedBound3.5}
	\E_p \sup_{t \in \R_+} \bigl|(\CI^{\eps, \L}_{\gamma, \sigma} \CK^{\lambda, \eps}_\CCG)_t \bigr| \leq \sum_{(T, \ell) \in \CCT^{\epsEH}_{\!\lambda}(\tilde\CCV)} \sum_{\n \in \CN^{\epsEH}_\gamma(T, \ell)} \E_p \sup_{t \in \R_+} \bigl|(\CI^{\eps, \L}_{\gamma, \sigma} \CK_{\CCG_\gamma}^{\eps, \n})_t \bigr|.
\end{equation}

We will now modify the kernels in \eqref{def:kayhat} in the same way how it was done in \cite[Appendix~A.5]{HQ18}. Let $\Amin \subset \tilde\CCE$ contain those edges $e = (e_-, e_+)$ which have the label $r_e < 0$, and for which any two vertices $\{u,v\}$ satisfying $u \wedge v = e_- \wedge e_+$ coincide with $\{e_-,e_+\}$. Then we can factorize \eqref{def:kayhat} as
\begin{equation}\label{eq:K-hat-A-minus}
	\hat K^{\eps, \n}(\bz) = \hat G^{\epsEH, \n}(\bz) \Bigl(\prod_{e \in \Amin} \hat K_e^{\eps, \n_e}(z_{e_-}, z_{e_+})\Bigr), \qquad \hat G^{\epsEH, \n}(\bz) := \prod_{e \notin \Amin} \hat K_e^{\eps, \n_e}(z_{e_-}, z_{e_+}).
\end{equation}
For $e = (e_-, e_+)$ and $r > 0$ we define the operator $\SY^r_e$ acting on sufficiently smooth functions $V: (\R^{d+1})^{\tilde \CCV} \to \R$ as
\begin{equation*}
	(\SY^r_e V)(\bz) := V(\bz) - \sum_{|k|_\s < r} \frac{(z_{e_+} - z_{e_-})^k}{k!} (D^k_{e_+} V) (P_e(\bz)),
\end{equation*}
where $D^k_{e_+}$ is a derivative with respect to $z_{e_+}$ and where $(P_e(\bz))_v = z_v$ if $v \neq e_+$ and $(P_e(\bz))_v = z_{e_-}$ if $v = e_+$. Furthermore, writing $\Amin = \{e^{(1)}, \ldots, e^{(k)}\}$ for some $k \geq 0$, we define the kernel
\begin{equation}\label{eq:DKernel_tilde}
	\tilde K^{\eps, \n}(\bz) := \Bigl(\SY^{r_{e^{(k)}}}_{e^{(k)}} \cdots \SY^{r_{e^{(1)}}}_{e^{(1)}}\hat G^{\epsEH, \n}(\bz)\Bigr) \Bigl(\prod_{e \in \Amin} \hat K_e^{\eps, \n_e}(z_{e_-}, z_{e_+})\Bigr).
\end{equation}
Then for every $\bz^{\var}$ we have
\begin{equation}\label{eq:two_kernels}
	\int_{(\R^{d+1})^{\tilde\CCV_{\!\bar \star}}} \!\!{\hat K}^{\eps, \n}(\bz)\, \mu_{\tilde\CCV_{\!\bar \star}, \bz^{\var}}^{\eps}(\d \bz) = \int_{(\R^{d+1})^{\tilde\CCV_{\!\bar \star}}} \!\!{\tilde K}^{\eps, \n}(\bz)\, \mu_{\tilde\CCV_{\!\bar \star}, \bz^{\var}}^{\eps}(\d \bz),
\end{equation}
which is just a reformulation of the argument below \cite[Equation~A.26]{HQ18} in our context. Then \eqref{e:wantedBound3.5} can be written as
\begin{equation}\label{e:wantedBound4}
	\E_p \sup_{t \in \R_+} \bigl|(\CI^{\eps, \L}_{\gamma, \sigma} \CK^{\lambda, \eps}_\CCG)_t \bigr| \leq \sum_{(T, \ell) \in \CCT^{\epsEH}_{\!\lambda}(\tilde\CCV)} \sum_{\n \in \CN^{\epsEH}_\gamma(T, \ell)} \E_p \sup_{t \in \R_+} \bigl|(\CI^{\eps, \L}_{\gamma, \sigma} \tilde \CK_{\CCG_\gamma}^{\eps, \n})_t \bigr|,
\end{equation}
with the new kernels
\begin{equation}\label{e:K_tilde}
	\tilde \CK^{\eps, \n}_{\CCG_\gamma}(\bz^{\var}) := \int_{(\R^{d+1})^{\tilde\CCV_{\!\bar \star}}} \!\!{\tilde K}^{\eps, \n}(\bz)\,\mu_{\tilde\CCV_{\!\bar \star}, \bz^{\var}}^{\eps}(\d \bz).
\end{equation}
Using the notation \eqref{eq:F_gamma}, we denote with $(\tilde \CK^{\eps, \n}_{\CCG_\gamma})^{\gamma, \sigma} : (\R^{d+1})^{\tilde\CCV_{\!\var}} \to \R$ the kernel which is obtained from $\tilde \CK^{\eps, \n}_{\CCG_\gamma}$ by making all the variables from the same component of $\gamma$ equal. We would like to apply  Theorem~\ref{thm:integral-bound-renorm} to bound the stochastic integrals in \eqref{e:wantedBound4}. For this, we need to estimate the norms \eqref{eq:norms-def} of the kernel $(\tilde \CK^{\eps, \n}_{\CCG_\gamma})^{\gamma, \sigma}(\bz^{\var})$, which is what we are going to do now. 

\medskip
Let $T^\circ$ denote the set of interior nodes of the tree $T$. Then for $e \in \hat E$ let us define the function $\eta_e : T^\circ \to \R$ by
\begin{align*}
	\eta_e(v) & := - \hat{a}_e \1_{e_\uparrow} (v) +  r_e \bigl(\1_{e_+\wedge \star} (v) - \1_{e_\uparrow} (v)\bigr) \1_{ r_e >0,\, e_+\wedge \star > e_\uparrow} \\
	          & \qquad + (1-  r_e -  \hat{a}_e) \bigl(\1_{e_-\wedge \star} (v) - \1_{e_\uparrow} (v)\bigr) \1_{ r_e >0,\, e_-\wedge \star > e_\uparrow},
\end{align*}
where $\1_v(w) := \1_{v = w}$ and $e_\uparrow := e_- \wedge e_+ \in T^\circ$ for an edge $e = (e_-, e_+) \in \hat\CCE$. The function $\eta_e$ coincides with the one defined in \cite[Equation~A.20]{HQ18} and is used to bound the generalised convolution without taking into account negative renormalisation. To consider negative renormalisation we define by analogy with \cite[Equation~A.27]{HQ18} a modified function 
\begin{equation}\label{e:tilde_eta}
	\tilde \eta(v) := |\s| + \sum_{e \in \hat\CCE} \tilde \eta_e(v), \qquad \tilde \eta_e(v) := \eta_e(v) - r_e\, \1_{e \in \Amin} \bigl(\1_{e_\uparrow} (v) - \1_{e_\Uparrow} (v) \bigr),
\end{equation}
where the interior node $e_\Uparrow \in T^\circ$ is of the form $w \wedge e_-$ with $w \not \in e$ which is the furthest from the root. 

From the fixed order $\sigma$ of the variables in \eqref{eq:def_integrals_kernels} we obtain an oder of the vertices in $\tilde\CCV_{\!\var}$. Then we write $\tilde{\CCV}_{\!\var} = (v_1, \ldots, v_m)$ according to this order. For every $v \in \tilde\CCV_{\!\var}$ we denote by $v^{\shortrightarrow}$ the element following after $v$ with respect to this order, and in case when there is no following element we define $v^{\shortrightarrow} = \STAR$. For $A \subset \tilde \CCV_{\!\var}$, let $T_A$ be the subtree of $T$ containing all the leaves $v$ and $v^{\shortrightarrow}$, for $v \in A \sqcup \{\STAR\}$, and all the inner nodes $v \wedge v^{\shortrightarrow}$ for such $v$. Let $T_A^\circ$ contain the inner nodes of $T_A$. 

Then we have the following bound on the norms \eqref{eq:norms-def} of the kernels $(\tilde \CK^{\eps, \n}_{\CCG_\gamma})^{\gamma, \sigma}$.

\begin{lemma}\label{lem:KTildeBound}
In the setting of Theorem~\ref{thm:convolutions}, let $\bp$ be one of the functions in the sum in \eqref{e:genconvBound}. Then for any labeled tree $(T, \ell) \in \CCT^{\epsEH}_{\!\lambda}(\tilde\CCV)$ there is a constant $C$ such that for every $\n \in \CN^{\epsEH}_\gamma(T,\ell)$ one has the bound
	\begin{equation}
		\big\Vert (\tilde \CK^{\eps, \n}_{\CCG_\gamma})^{\gamma, \sigma} \big\Vert_{L^{\bp}_\eps} \leq C \epsEH^{- \delta_{\gamma}(\bp)} \Bigl(\prod_{\nu \in T^\circ} 2^{-\ell_\nu \tilde \eta(\nu)}\Bigr) \Bigl(\prod_{\nu \in T_{\Gamma}^\circ} 2^{\ell_{\nu}|\s| }\Bigr)^{\frac{1}{2}}, \label{eq:KTildeBound1} 
	\end{equation}
	where we use the norm \eqref{eq:norms-def} and the constant $\delta_{\gamma}(\bp)$ defined in \eqref{eq:delta-gamma-p}.
\end{lemma}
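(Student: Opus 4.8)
The plan is to adapt the multiscale clustering argument of \cite[Appendix~A]{HQ18}; the only genuinely new ingredient is that we must bound the mixed norm \eqref{eq:norms-def} in the variable vertices rather than an $L^\infty$ norm, and it is precisely this that will produce the factor $\epsEH^{-\delta_\gamma(\bp)}$ and the subtree $T_\Gamma$ in \eqref{eq:KTildeBound1}. \emph{Reduction and pointwise localisation.} First I would dispose of the negatively-renormalised edges $e\in\Amin$ exactly as around \cite[Eqs.~A.20 and A.26--A.27]{HQ18}: the operators $\SY^{r_e}_e$ in \eqref{eq:DKernel_tilde}, together with the vanishing moments of Assumption~\ref{a:Kernels}\eqref{a:DKernels_poly}, allow one --- after a Taylor expansion and an estimate of the remainder --- to replace $\tilde K^{\eps,\n}$ by a pointwise majorant whose improvements are recorded in the modified weight $\tilde\eta_e$ of \eqref{e:tilde_eta}. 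Combining this with Assumption~\ref{a:Kernels}\eqref{a:DKernels_support}--\eqref{a:DKernels_bounds} and the definition of $\CN^{\epsEH}_\gamma(T,\ell)$, for every $\n\in\CN^{\epsEH}_\gamma(T,\ell)$ one obtains a pointwise bound of the form
\[
\bigl|\tilde K^{\eps,\n}(\bz)\bigr| \ls \Bigl(\prod_{\nu\in T^\circ}2^{-\ell_\nu(\tilde\eta(\nu)-|\s|)}\Bigr)\prod_{v,w\in\tilde\CCV}\1_{\|z_v-z_w\|_\s\ls 2^{-\ell_{v\wedge w}}},
\]
with the convention $z_\star=0$; the implicit constant is controlled by the finitely many kernel norms $\|K^\eps_e\|^{(\epsEH)}_{a_e;q}$ and measure norms $\|\mu^\eps_v\|_{\TV}$, and is hence uniform in $\eps$, $\epsEH$ and the tree.

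\emph{Integrating out the internal vertices.} Next I would integrate the vertices of $\tilde\CCV_{\!\bar\star}\setminus\tilde\CCV_{\!\var}$ against the measures $\mu^\eps_v$, from the leaves of $T$ towards the root, in the manner of the main estimate of \cite[Appendix~A]{HQ18}. Since every scale satisfies $2^{-\ell_\nu}\ge 2^{-N}\gtrsim\epsEH\ge\eps$, integrating one such vertex --- which merges with the already-processed part of the tree at an interior node $\nu$ --- runs over a space--time region of volume $\ls 2^{-\ell_\nu|\s|}$, supplying the additive $|\s|$ present in $\tilde\eta(\nu)$. What remains is a function of $\bz^{\var}$, with the variables of each component of $\gamma$ identified and ordered by $\sigma$, dominated pointwise by an explicit product of powers of the $2^{-\ell_\nu}$ times localisation indicators for the remaining positions $z_{v_i}$; in particular, integrating every $z_{v_i}$ with the lattice weight $\eps^d\,\d t$ would reconstitute the volume factors at all remaining interior nodes and give the ``fully integrated'' bound $\prod_{\nu\in T^\circ}2^{-\ell_\nu\tilde\eta(\nu)}$, which is also the value of $\|(\tilde\CK^{\eps,\n}_{\CCG_\gamma})^{\gamma,\sigma}\|_{L^{\bp}_\eps}$ when $\bp\equiv 1$.

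\emph{The mixed norm.} Finally I would run the recursion \eqref{eq:norms-def}, applying to each $z_{v_i}$ the operation $L^{\bp(i)}_\eps$ instead of a plain integral. For a non-negative function supported on a space--time set of volume $V\sim 2^{-\ell|\s|}$ one has $\|f\|_{L^1_\eps}\ls V\|f\|_\infty$, $\|f\|_{L^2_\eps}\ls V^{1/2}\|f\|_\infty$ and $\|f\|_{L^\infty_\eps}=\|f\|_\infty$, so relative to the fully integrated bound the variable $v_i$ costs an extra factor $1$, $2^{\ell|\s|/2}$ or $2^{\ell|\s|}$ according to whether $\bp(i)=1$, $2$ or $\infty$, where $2^{-\ell}$ is the scale of the localisation used for $z_{v_i}$. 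For $v_i\notin\Gamma$ I would bound crudely $2^{\ell|\s|}\le 2^{N|\s|}\ls\epsEH^{-|\s|}$ and $2^{\ell|\s|/2}\ls\epsEH^{-|\s|/2}$; summed over $i$ this yields a loss $\epsEH^{-\frac{|\s|}{2}(2|\bp^{-1}(\infty)\setminus\Gamma|+|\bp^{-1}(2)\setminus\Gamma|)}$ (recall $\bp^{-1}(1)\cap\Gamma=\emptyset$, so a $\Gamma$-vertex always carries $\bp(i)\in\{2,\infty\}$). For $v_i\in\Gamma$ the localisation couples $z_{v_i}$ to the next variable $z_{v_i^{\shortrightarrow}}$ (which is $0$ when $v_i=v_m$), so its scale is $2^{-\ell_{v_i\wedge v_i^{\shortrightarrow}}}$; gathering a factor $2^{\ell_{v_i\wedge v_i^{\shortrightarrow}}|\s|/2}$ for each $v_i\in\Gamma$ and estimating the resulting product by one over the interior nodes of $T_\Gamma$ gives exactly $\bigl(\prod_{\nu\in T_\Gamma^\circ}2^{\ell_\nu|\s|}\bigr)^{1/2}$, while the half-volume still missing for the $\Gamma$-vertices with $\bp(i)=\infty$ is bounded by $\epsEH^{-|\s|/2}$ each, contributing $\epsEH^{-\frac{|\s|}{2}|\bp^{-1}(\infty)\cap\Gamma|}$. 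Adding the three groups of $\epsEH$-losses reproduces $\delta_\gamma(\bp)$ from \eqref{eq:delta-gamma-p}, and \eqref{eq:KTildeBound1} follows.

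\emph{Main obstacle.} The hard part will be this last step: one must reconcile the $\sigma$-order in which \eqref{eq:norms-def} processes the variable vertices with the leaves-to-root order of the clustering --- which forces either a compatible choice of the binary tree or a direct argument that at every stage the tightest available localisation may be used --- and one must attribute each refunded or lost (half-)volume factor to a definite interior node, so that the totals telescope precisely into $\prod_{\nu\in T_\Gamma^\circ}2^{\ell_\nu|\s|/2}$ and $\epsEH^{-\delta_\gamma(\bp)}$ rather than into something larger. By contrast, the pointwise kernel estimates, the negative-renormalisation reduction and the internal integrations are routine transcriptions of \cite{HQ18}.
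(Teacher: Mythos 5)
Your proposal is correct and follows essentially the same route as the paper's proof (pointwise kernel bound via the modified weight $\tilde\eta$, integration of the internal vertices, then the recursion for the mixed norm), and the ``main obstacle'' you flag is resolved in the paper by proving, inductively on $M=1,\dots,m$, the auxiliary estimate
\begin{equation*}
\big\Vert (\tilde \CK^{\eps, \n}_{\CCG_\gamma})^{\gamma, \sigma} \big|_{\bz_{D}} \big\Vert_{L^{\bp}_\eps} \lesssim \Bigl(\prod_{\nu \in T^\circ} 2^{-\ell_\nu \tilde \eta(\nu)}\Bigr) \Bigl(\prod_{\nu \in T^\circ_{D \sqcup \bp^{-1}(\infty)}} 2^{\ell_\nu|\s|}\Bigr) \Bigl(\prod_{\nu \in T^\circ_{\bp^{-1}(2)}} 2^{\ell_\nu|\s|}\Bigr)^{\frac{1}{2}}
\end{equation*}
with the not-yet-processed variables $D=\{v_{M+1},\dots,v_m\}$ held fixed, which pins $z_{v^{\shortrightarrow}_M}$ at every step and thereby makes your localisation $\|z_{v_M}-z_{v^{\shortrightarrow}_M}\|_\s\lesssim 2^{-\ell_{v_M\wedge v^{\shortrightarrow}_M}}$ and per-vertex cost accounting literal. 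The conversion of these excess products over $T^\circ_{\bp^{-1}(\infty)}$ and $T^\circ_{\bp^{-1}(2)}$ into the $T^\circ_\Gamma$ factor and the $\epsEH^{-\delta_\gamma(\bp)}$ loss using $2^{-\ell_\nu}\gtrsim\epsEH$ is then algebraic post-processing, exactly as you describe.
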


\begin{proof}
We are going to prove a more general result; namely, we will prove a bound on the norm of the kernel $(\tilde \CK^{\eps, \n}_{\CCG_\gamma})^{\gamma, \sigma}$ some of whose variables are fixed. For this, we take $0 \leq M < m$ and the set $D = \{v_{M+1}, \ldots, v_m\} \subset \tilde \CCV_{\!\var}$ of vertices and we will fix the values of the variables corresponding to these vertices. More precisely, for $\bz_{D} \in (\R^{d+1})^D$ we write $(\tilde \CK^{\eps, \n}_{\CCG_\gamma})^{\gamma, \sigma} \big|_{\bz_{D}}$ for the function from $(\R^{d+1})^{\tilde\CCV_{\!\var} \setminus D}$ to $\R$, which is obtained from $(\tilde \CK^{\eps, \n}_{\CCG_\gamma})^{\gamma, \sigma}(\bz^{\var})$ by fixing the values of the variables $z^{\var}_v$ with $v \in D$. We extend this definition for $D = \emptyset$ (which corresponds to $M = m$) by $(\tilde \CK^{\eps, \n}_{\CCG_\gamma})^{\gamma, \sigma} \big|_{\bz_{\emptyset}} = (\tilde \CK^{\eps, \n}_{\CCG_\gamma})^{\gamma, \sigma}$.

For $1 \leq M \leq m$ and for a function $\bp \in \SP_{\!M}$, we are going to prove the bound
\begin{equation}\label{eq:KTildeBound-need}
		\big\Vert (\tilde \CK^{\eps, \n}_{\CCG_\gamma})^{\gamma, \sigma} \big|_{\bz_{D}} \big\Vert_{L^{\bp}_\eps} \leq C \Bigl(\prod_{\nu \in T^\circ} 2^{-\ell_\nu \tilde \eta(\nu)}\Bigr) \Bigl(\prod_{\nu \in T^\circ_{D \sqcup \bp^{-1}(\infty)}} 2^{\ell_\nu|\s|}\Bigr) \Bigl(\prod_{\nu \in T^\circ_{\bp^{-1}(2)}} 2^{\ell_\nu|\s|}\Bigr)^{\frac{1}{2}}
\end{equation}
uniformly in $\bz_{D} \in (\R^{d+1})^D$. Moreover, we will show that for $M = 0$ (in which case $D = \tilde\CCV_{\!\var}$) the same bound holds for the absolute value of $(\tilde \CK^{\eps, \n}_{\CCG_\gamma})^{\gamma, \sigma} \big|_{\bz_{D}}$.

We can see that the bound \eqref{eq:KTildeBound1} follows from \eqref{eq:KTildeBound-need} in the particular case $M = m$ corresponding to $D = \emptyset$. To see it, we note that \eqref{eq:KTildeBound-need} simplifies to
\begin{equation}\label{eq:KTildeBound-need-D-is-empty}
		\big\Vert (\tilde \CK^{\eps, \n}_{\CCG_\gamma})^{\gamma, \sigma} \big\Vert_{L^{\bp}_\eps} \leq C \Bigl(\prod_{\nu \in T^\circ} 2^{-\ell_\nu \tilde \eta(\nu)}\Bigr) \Bigl(\prod_{\nu \in T^\circ_{\bp^{-1}(\infty)}} 2^{\ell_\nu|\s|}\Bigr) \Bigl(\prod_{\nu \in T^\circ_{\bp^{-1}(2)}} 2^{\ell_\nu|\s|}\Bigr)^{\frac{1}{2}}.
\end{equation}
Our next goal is to replace the product over $\nu \in T^\circ_{\bp^{-1}(2)}$ by the product over $\nu \in T^\circ_{\Gamma}$. We do it by noting that $\tilde\CCV_{\!\var} = \bp^{-1}(1) \sqcup \bp^{-1}(2) \sqcup \bp^{-1}(\infty)$ and using simple operations on the sets. Namely, we have $\bp^{-1}(2) = \bigl(\Gamma \sqcup (\bp^{-1}(2) \setminus \Gamma)\bigr) \setminus \bigl(\bp^{-1}(\infty) \cap \Gamma\bigr)$, where we used the assumption $\bp^{-1}(1) \cap \Gamma = \emptyset$ in \eqref{e:genconvBound}. Then we write the products over $\nu \in T^\circ_{\bp^{-1}(2)}$ as
\begin{equation*}
\prod_{\nu \in T^\circ_{\bp^{-1}(2)}} 2^{\ell_{\nu}|\s| }= \Bigl(\prod_{\nu \in T^\circ_{\Gamma}} 2^{\ell_\nu|\s| }\Bigr) \Bigl(\prod_{\nu \in T^\circ_{\bp^{-1}(2)} \setminus T^\circ_{\Gamma}} 2^{\ell_\nu|\s| }\Bigr) \Bigl(\prod_{\nu \in T^\circ_{\bp^{-1}(\infty)} \cap T^\circ_{\Gamma}} 2^{-\ell_\nu|\s|}\Bigr).
\end{equation*}
Hence, the product on the right-hand side of \eqref{eq:KTildeBound-need-D-is-empty} equals 
\begin{align*}
&\Bigl(\prod_{\nu \in T^\circ} 2^{-\ell_\nu \tilde \eta(\nu)}\Bigr) \Bigl(\prod_{\nu \in T^\circ_{\Gamma}} 2^{\ell_\nu|\s| }\Bigr)^{\frac{1}{2}} \Bigl(\prod_{\nu \in T^\circ_{\bp^{-1}(\infty)}} 2^{\ell_\nu|\s|}\Bigr) \Bigl(\prod_{\nu \in T^\circ_{\bp^{-1}(2)} \setminus T^\circ_{\Gamma}} 2^{\ell_\nu|\s| }\Bigr)^{\frac{1}{2}} \Bigl(\prod_{\nu \in T^\circ_{\bp^{-1}(\infty)} \cap T^\circ_{\Gamma}} 2^{-\ell_\nu|\s|}\Bigr)^{\frac{1}{2}} \\
& =\Bigl(\prod_{\nu \in T^\circ} 2^{-\ell_\nu \tilde \eta(\nu)}\Bigr) \Bigl(\prod_{\nu \in T^\circ_{\Gamma}} 2^{\ell_\nu|\s| }\Bigr)^{\frac{1}{2}} \Bigl(\prod_{\nu \in T^\circ_{\bp^{-1}(\infty) \setminus T^\circ_{\Gamma}}} 2^{\ell_\nu|\s|}\Bigr) \Bigl(\prod_{\nu \in T^\circ_{\bp^{-1}(\infty)} \cap T^\circ_{\Gamma}} 2^{\ell_\nu|\s|}\Bigr)^{\frac{1}{2}} \Bigl(\prod_{\nu \in T^\circ_{\bp^{-1}(2)} \setminus T^\circ_{\Gamma}} 2^{\ell_\nu|\s| }\Bigr)^{\frac{1}{2}}.
\end{align*}
	According to our definition of the labels $\ell$ in Section~\ref{sec:bounds-multiscale} we have $2^{-\ell_\nu} \geq \epsEH$, and we can bound the preceding expression by the right-hand side of \eqref{eq:KTildeBound1}.
\medskip

Now, we turn to the proof of \eqref{eq:KTildeBound-need}. From \cite[Lemma~A.16]{HQ18} we conclude that the kernel \eqref{eq:DKernel_tilde} satisfies
	\begin{equation}\label{e:wantedBoundK}
		\sup_{\bz \in (\R^{d+1})^{\tilde \CCV_{\!\bar \star}}}|\tilde K^{\eps, \n}(\bz)| \lesssim \prod_{\nu \in T^\circ} 2^{-\ell_\nu(\tilde \eta(\nu) - |\s|)},
	\end{equation}
	uniformly over all $\n \in \CN^{\epsEH}_\gamma(T,\ell)$. We will use this estimate to bound the norms \eqref{eq:norms-def}. 
	
	Let us first consider the case $M = 0$ corresponding to $D = \tilde{\CCV}_{\!\var}$. We have $(\tilde \CK^{\eps, \n}_{\CCG_\gamma})^{\gamma, \sigma} \big|_{\bz_{D}} = (\tilde \CK^{\eps, \n}_{\CCG_\gamma})^{\gamma, \sigma} (\bz_{D})$, and we are going to bound it absolutely. From \eqref{eq:measure-mu-general} and \eqref{e:K_tilde} we get 
	\begin{equation}\label{eq:bounds-induction}
	(\tilde \CK^{\eps, \n}_{\CCG_\gamma})^{\gamma, \sigma} \Big|_{\bz_{D}} = \int_{(\R^{d+1})^{\tilde\CCV_{\!\bar \star} \setminus \tilde \CCV_{\!\var}}} \!\!(\tilde \CK^{\eps, \n}_{\CCG_\gamma})^{\gamma, \sigma}(\bz) \Big|_{\bz^{\var} = \bz_{D}}\, \prod_{v \in \tilde \CCV_{\!\bar\star} \setminus \tilde \CCV_{\!\var}} \mu_v^{\eps}(\d z_v).
	\end{equation}
	We write $\bz = (\bz_{\tilde\CCV_{\!\bar \star} \setminus \tilde \CCV_{\!\var}}, \bz^{\var})$, where $\bz_{\tilde\CCV_{\!\bar \star} \setminus \tilde \CCV_{\!\var}}$ contains the variables $z_v$ with $v \in \tilde\CCV_{\!\bar \star} \setminus \tilde \CCV_{\!\var}$. The definition of the kernel and properties of the measures $\mu^\eps_v$ allow to bound the preceding expression by a constant times
	\begin{equation*}
		|\CA^\eps_{\gamma}|_{|\tilde\CCV_{\!\bar \star} \setminus \tilde \CCV_{\!\var}|} \sup_{\bz_{\tilde\CCV_{\!\bar \star} \setminus \tilde \CCV_{\!\var}} \in (\R^{d+1})^{\tilde\CCV_{\!\bar \star} \setminus \tilde \CCV_{\!\var}}}|\tilde K^{\eps, \n}(\bz_{\tilde\CCV_{\!\bar \star} \setminus \tilde \CCV_{\!\var}}, \bz_{D})|,
	\end{equation*}
	where we write $|\cdot|_\alpha$ for the $(\alpha |\s|)$-dimensional Lebesgue measure, and the set $\CA^\eps_{\gamma}$ contains all points $\{ z_v : v \in \tilde\CCV_{\!\bar \star} \setminus \tilde \CCV_{\!\var}\}$, satisfying the conditions 
	\begin{align*}
		\|z_v - z_w\|_\s \leq C' 2^{-\ell_{v\wedge w}} \qquad &\text{for}~v, w \in \tilde\CCV_{\!\bar \star} \setminus \tilde \CCV_{\!\var},\\
		\|z_v - z^{\var}_w\|_\s \leq C' 2^{-\ell_{v\wedge w}} \qquad &\text{for}~v \in \tilde\CCV_{\!\bar \star} \setminus \tilde \CCV_{\!\var}, ~ w \in \tilde \CCV_{\!\var}.
	\end{align*}
Here, we use the fact that $2^{-\ell_{v\wedge w}} \geq \epsEH$, which is a consequence of the assumption  $(T,\ell) \in \CCT^{\epsEH}_{\!\lambda}(\tilde\CCV)$. For an interior node $\nu \in T^\circ$, let us choose $v_\pm \in \tilde \CCV$ to be such that $v_- \wedge v_+ = \nu$ and there is an edge from $v_-$ to $v_+$. Then the collection of edges $\{(v_-, v_+) : \nu \in T^\circ\}$ forms a spanning tree of $\tilde\CCV$, and $\CA^\eps_{\gamma}$ is a subset of
	\begin{align*}
		 \Bigl\{\bz_{\tilde\CCV_{\!\bar \star} \setminus \tilde \CCV_{\!\var}} \in (\R^{d+1})^{\tilde\CCV_{\!\bar \star} \setminus \tilde \CCV_{\!\var}}  :\; & \| z_{v_+} - z_{v_-}\|_\s \leq C' 2^{-\ell_{\nu}}\;\; \forall\; \nu \in T^\circ, v_\pm \notin \tilde \CCV_{\!\var}, \\
		 &\qquad \| z_{v_+} - z^{\var}_{v_-}\|_\s \leq C' 2^{-\ell_{\nu}}\;\; \forall\; \nu \in T^\circ, v_- \in \tilde \CCV_{\!\var} \Bigr\},
	\end{align*}
	where $z_\star = 0$. Here, we used the property that the vertices in $\tilde \CCV_{\!\var}$ have only outgoing edges. Next, we compute the Lebesgue measure of this set. We integrate out the variables $z_v$ one by one, for $v \notin \tilde \CCV_{\!\var}$, which gives an expression of order $\prod_{\substack{\nu \in T^\circ \setminus T^\circ_{\tilde \CCV_{\!\var}}}} 2^{-\ell_\nu|\s|}$. Hence,
	\begin{equation*}
	|\CA^\eps_{\gamma}|_{|\tilde\CCV_{\!\bar \star} \setminus \tilde \CCV_{\!\var}|} \lesssim \prod_{\substack{\nu \in T^\circ \setminus T^\circ_{\tilde \CCV_{\!\var}}}} 2^{-\ell_\nu|\s|},
	\end{equation*}
	combining which with the estimate on the kernel \eqref{e:wantedBoundK} we get
	\begin{equation}
	\Big| (\tilde \CK^{\eps, \n}_{\CCG_\gamma})^{\gamma, \sigma} \big|_{\bz_{D}} \Bigr| \lesssim \Bigl(\prod_{\substack{\nu \in T^\circ \setminus T^\circ_{\tilde \CCV_{\!\var}}}} 2^{-\ell_\nu|\s|}\Bigr) \Bigl(\prod_{\nu \in T^\circ} 2^{-\ell_\nu(\tilde \eta(\nu) - |\s|)}\Bigr) \lesssim \Bigl(\prod_{\nu \in T^\circ} 2^{-\ell_\nu \tilde \eta(\nu)}\Bigr) \Bigl(\prod_{\substack{\nu \in T^\circ_{\tilde \CCV_{\!\var}}}} 2^{\ell_\nu|\s|}\Bigr). \label{eq:K-simple-bound2}
	\end{equation}
	Recalling that $D = \tilde \CCV_{\!\var}$, this is exactly the right-hand side of \eqref{eq:KTildeBound-need} with $\bp^{-1}(\infty) = \bp^{-1}(2) = \emptyset$.
	
	\medskip
	Now we proceed with the proof of \eqref{eq:KTildeBound-need} by induction over $M = 1, \ldots, m$. For $\bp \in \SP_{\!M}$ with $M \geq 2$, let $\bar \bp$ be the restriction of $\bp$ to $\{1, \ldots, M-1\}$. Let us furthermore define the function with respect to the variable $z_{v_M}$ corresponding to the vertex $v_M$:
	\begin{equation}\label{eq:F}
	F(z_{v_M}) := \Bigl\Vert (\tilde \CK^{\eps, \n}_{\CCG_\gamma})^{\gamma, \sigma} \big|_{\bz_{D \sqcup \{v_M\}}} \Bigr\Vert_{L^{\bar \bp}_\eps}
	\end{equation}
	if $M \geq 2$ and $F(z_{v_M}) := (\tilde \CK^{\eps, \n}_{\CCG_\gamma})^{\gamma, \sigma} \big|_{\bz_{D \sqcup \{v_M\}}}$ if $M = 1$. Then we use the definition \eqref{eq:norms-def} to write
	\begin{equation}\label{eq:KTildeBound1_1}
		\Bigl\Vert (\tilde \CK^{\eps, \n}_{\CCG_\gamma})^\gamma \big|_{\bz_{D}} \Bigr\Vert_{L^{\bp}_\eps} \leq \| F \|_{L^{\bp(M)}_\eps}.
	\end{equation}
	We got an inequality because we omitted the indicator functions in the definition \eqref{eq:norms-def}, which corresponds to increasing the domain of integration of the function. We need to bound the norm on the right-hand side of \eqref{eq:KTildeBound1_1}, for what we consider all possible values of $\bp(M)$ one-by-one.
\medskip
	
If $\bp(M) = 2$, then the definition \eqref{eq:norm-eps} yields
	\begin{equation}\label{eq:K-simple-bound}
	\| F \|_{L^2_\eps} = \biggl(\eps^d \sum_{x \in \Le} \int_{0}^\infty F(r,x)^2 \d r\biggr)^{\frac{1}{2}}.
	\end{equation}
The norms \eqref{eq:norms-def} are defined on the time interval $[0, T]$, which means that the integral in \eqref{eq:K-simple-bound} with respect to $r$ should be on $[0, T]$. Since the kernel $(\tilde \CK^{\eps, \n}_{\CCG_\gamma})^{\gamma, \sigma}$ is compactly supported, we can take $T$ big enough so that the integrals can be written on $[0, \infty)$. We use this convention in all formulas below. 

Recalling the definitions of the kernel \eqref{e:K_tilde} and the function $\n$ in Section~\ref{sec:bounds-multiscale}, we conclude that the function $F(z_{v_M})$ is supported on $\|z_{v_M} - z_{v^{\shortrightarrow}_M}\|_\s \lesssim 2^{- \ell_{v_M \wedge v^{\shortrightarrow}_M}}$. We recall that $v^{\shortrightarrow}_M = v_{M+1}$ if $M < m$ and $v^{\shortrightarrow}_m = \STAR$. Then the norm \eqref{eq:K-simple-bound} can be bounded as
\begin{equation}\label{eq:F1-bound}
	\| F \|_{L^2_\eps} \lesssim \biggl( 2^{- \ell_{v_M \wedge v^{\shortrightarrow}_M} |\s|} \sup_{\|z_{v_M} - z_{v^{\shortrightarrow}_M}\|_\s \lesssim 2^{- \ell_{v_M \wedge v^{\shortrightarrow}_M}}} F(z_{v_M})^2 \biggr)^{\frac{1}{2}}.
\end{equation}

If $M = 1$, i.e. the set $D \sqcup \{v_1\}$ in \eqref{eq:F} equals $\tilde{\CCV}_{\!\var}$, then the function $F(z_{v_1})$ satisfies the bound \eqref{eq:K-simple-bound2}. Then the expression \eqref{eq:F1-bound} is bounded by a constant times
\begin{align}\nonumber
	&2^{- \ell_{v_1 \wedge v^{\shortrightarrow}_1} |\s|/2} \Bigl(\prod_{\nu \in T^\circ} 2^{-\ell_\nu \tilde \eta(\nu)}\Bigr) \Bigl(\prod_{\substack{\nu \in T^\circ_{\tilde \CCV_{\!\var}}}} 2^{\ell_\nu|\s|}\Bigr) \\
	&\qquad = \Bigl(\prod_{\nu \in T^\circ} 2^{-\ell_\nu \tilde \eta(\nu)}\Bigr) \Bigl(\prod_{\substack{\nu \in T^\circ_{\tilde{\CCV}_{\!\var}} \setminus \{v_1 \wedge v^{\shortrightarrow}_1\}}} 2^{\ell_\nu|\s|}\Bigr) 2^{\ell_{v_1 \wedge v^{\shortrightarrow}_1} |\s|/2}. \label{eq:F1-bound1}
\end{align}
Since $T^\circ_{\tilde{\CCV}_{\!\var}} \setminus \{v_1 \wedge v^{\shortrightarrow}_1\} = T^\circ_{\tilde{\CCV}_{\!\var} \setminus \{v_1\}}$, this is exactly \eqref{eq:KTildeBound-need} with $\bp^{-1}(2) = \{1\}$, $\bp^{-1}(1) = \bp^{-1}(\infty) = \emptyset$ and $D = \tilde{\CCV}_{\!\var} \setminus \{v_1\}$.

If $M \geq 2$, i.e. the set $D \sqcup \{v_M\}$ in \eqref{eq:F} is a strict subset of $\tilde{\CCV}_{\!\var}$, then by the induction hypothesis the function $F(z_{v_M})$ satisfies the bound \eqref{eq:KTildeBound-need} with the function $\bar \bp$ and the set $D \sqcup \{v_M\}$. Then the expression \eqref{eq:F1-bound} is bounded by a constant multiple of 
\begin{equation}\label{eq:F1-bound2}
2^{- \ell_{v_M \wedge v^{\shortrightarrow}_M} |\s| / 2} \Bigl(\prod_{\nu \in T^\circ} 2^{-\ell_\nu \tilde \eta(\nu)}\Bigr) \Bigl(\prod_{\nu \in T^\circ_{D \sqcup \{v_M\} \sqcup \bar \bp^{-1}(\infty)}} 2^{\ell_\nu|\s| }\Bigr) \Bigl(\prod_{\nu \in T^\circ_{\bar \bp^{-1}(2)}} 2^{\ell_\nu|\s| }\Bigr)^{\frac{1}{2}}.
\end{equation}
Since $T^\circ_{D \sqcup \{v_M\} \sqcup \bar \bp^{-1}(\infty)} = T^\circ_{D \sqcup \bar \bp^{-1}(\infty)} \sqcup \{v_M \wedge v_M^{\shortrightarrow}\}$ and $T^\circ_{\bar \bp^{-1}(2)} = T^\circ_{\bp^{-1}(2)} \setminus \{v_M \wedge v_M^{\shortrightarrow}\}$, this gives the required expression \eqref{eq:KTildeBound-need}.
\medskip

Now we consider the case $\bp(M) = 1$ in \eqref{eq:KTildeBound1_1}. Similarly to \eqref{eq:F1-bound} we get 
\begin{equation}\label{eq:F2-bound}
	\| F \|_{L^1_\eps} \lesssim 2^{- \ell_{v_M \wedge v^{\shortrightarrow}_M} |\s|} \sup_{\|z_{v_M} - z_{v^{\shortrightarrow}_M}\|_\s \lesssim 2^{- \ell_{v_M \wedge v^{\shortrightarrow}_M}}} |F(z_{v_M})|.
\end{equation}
If $M = 1$, then by analogy with \eqref{eq:F1-bound1} we bound the preceding expression by a constant times 
\begin{equation*}
	2^{- \ell_{v_1 \wedge v^{\shortrightarrow}_1} |\s|} \Bigl(\prod_{\nu \in T^\circ} 2^{-\ell_\nu \tilde \eta(\nu)}\Bigr) \Bigl(\prod_{\substack{\nu \in T^\circ_{\tilde \CCV_{\!\var}}}} 2^{\ell_\nu|\s|}\Bigr) = \Bigl(\prod_{\nu \in T^\circ} 2^{-\ell_\nu \tilde \eta(\nu)}\Bigr) \Bigl(\prod_{\substack{\nu \in T^\circ_{\tilde{\CCV}_{\!\var}} \setminus \{v_1 \wedge v^{\shortrightarrow}_1\}}} 2^{\ell_\nu|\s|}\Bigr),
\end{equation*}
which is exactly \eqref{eq:KTildeBound-need} with $D = \tilde{\CCV}_{\!\var} \setminus \{v_1\}$. In the case $M \geq 2$ we use the induction hypothesis, and by analogy with \eqref{eq:F1-bound2}, we bound \eqref{eq:F2-bound} by a constant times
\begin{equation*}
2^{- \ell_{v_M \wedge v^{\shortrightarrow}_M} |\s|} \Bigl(\prod_{\nu \in T^\circ} 2^{-\ell_\nu \tilde \eta(\nu)}\Bigr) \Bigl(\prod_{\nu \in T^\circ_{D \sqcup \bar \bp^{-1}(\infty) \sqcup \{v_M\}}} 2^{\ell_\nu|\s| }\Bigr) \Bigl(\prod_{\nu \in T^\circ_{\bar \bp^{-1}(2)}} 2^{\ell_\nu|\s| }\Bigr)^{\frac{1}{2}},
\end{equation*}
which is the required bound \eqref{eq:KTildeBound-need}.
\medskip

Finally, we consider the case $\bp(M) = \infty$ in \eqref{eq:KTildeBound1_1}. Similarly to \eqref{eq:F1-bound} we can bound
\begin{equation}\label{eq:F3-bound}
	\| F \|_{L^p_\eps} \lesssim \sup_{\|z_{v_M} - z_{v^{\shortrightarrow}_M}\|_\s \lesssim 2^{- \ell_{v_M \wedge v^{\shortrightarrow}_M}}} |F(z_{v_M})|.
\end{equation} 
In the case $M = 1$, we use \eqref{eq:K-simple-bound2} to bound this expression by a constant multiple of 
\begin{equation*}
 \Bigl(\prod_{\nu \in T^\circ} 2^{-\ell_\nu \tilde \eta(\nu)}\Bigr) \Bigl(\prod_{\substack{\nu \in T^\circ_{\tilde \CCV_{\!\var}}}} 2^{\ell_\nu|\s|}\Bigr),
\end{equation*}
which is the required bound \eqref{eq:KTildeBound-need} with $\bp^{-1}(1) = \bp^{-1}(2) = \emptyset$, $\bp^{-1}(\infty) = \{v_1\}$ and $D = \tilde{\CCV}_{\!\var} \setminus \{v_1\}$. If $M \geq 2$, we use the induction hypothesis and similarly to \eqref{eq:F1-bound2} we bound the expression \eqref{eq:F3-bound} by a constant times
\begin{equation*}
\Bigl(\prod_{\nu \in T^\circ} 2^{-\ell_\nu \tilde \eta(\nu)}\Bigr) \Bigl(\prod_{\nu \in T^\circ_{D \sqcup \bar \bp^{-1}(\infty) \sqcup \{v_M\}}} 2^{\ell_\nu|\s| }\Bigr) \Bigl(\prod_{\nu \in T^\circ_{\bar \bp^{-1}(2)}} 2^{\ell_\nu|\s| }\Bigr)^{\frac{1}{2}},
\end{equation*}
which is exactly \eqref{eq:KTildeBound-need}.
\end{proof}

Since the product in \eqref{eq:KTildeBound-need} is different from the one in \cite[Lemma~A.10]{HQ18}, we need to have an analogous result in our context. For this we define the function $\hat \eta : T^\circ \to \R$ by $\hat \eta(v) = \tilde \eta(v)$ if $v \notin T^\circ_{\Gamma}$, and $\hat \eta(v) = \tilde \eta(v) - |\s|/2$ if $v \in T^\circ_{\Gamma}$, where we use the set $T^\circ_{\Gamma}$ introduced above Lemma~\ref{lem:KTildeBound}.

\begin{lemma}\label{lem:eta-satisfies}
In the setting of Theorem~\ref{thm:convolutions}, the function $\hat \eta$ satisfies assumptions of \cite[Lemma~A.10]{HQ18}, and 
\begin{equation}\label{eq:eta-hat-length}
|\hat \eta| := \sum_{v \in \T^\circ} \hat\eta(v) = |\s| |\tilde \CCV_{\!\bar\star} \setminus \{ v^\uparrow_{\!\star} \}| - \sum_{e \in \hat \CCE} \hat a_e - \frac{|\s|}{2} |\Gamma|.
\end{equation}
\end{lemma}

\begin{proof}
Using the definition of the function $\hat \eta$, the assumptions of \cite[Lemma~A.10]{HQ18} follow at once if we prove the following two properties 
\begin{enumerate}[topsep=2pt, itemsep=0ex]
\item For every $\nu \in T^\circ$ one has $\sum_{v \geq \nu} \tilde \eta(v) > \frac{|\s|}{2} \#\{v \in T^\circ_{\Gamma} : v \geq \nu\}$.
\item For every $\nu \in T^\circ$ such that $\nu \leq \nu_\star$ one has $\sum_{v \not\geq \nu} \tilde \eta(v) < \frac{|\s|}{2} \#\{v \in T^\circ_{\Gamma} : v \not\geq \nu\}$, provided that
this sum contains at least one term, where $\nu_\star$ is a fixed distinguished inner node.
\end{enumerate}
These bounds can be shown by repeating the proof of \cite[Lemma~A.19]{HQ18} and using Assumption~\ref{a:mainContraction}. To compute \eqref{eq:eta-hat-length} we use $|T^\circ_{\Gamma}| = |\Gamma|$.
\end{proof}

\begin{lemma}\label{lem:bound-for-eta}
Let the function $\tilde \eta$ be defined in \eqref{e:tilde_eta}. In the setting of Theorem~\ref{thm:convolutions} the following bound holds uniformly over $\lambda \in (0, 1]$:
\begin{equation}\label{eq:bound-for-eta}
\sum_{\ell \in \CN^{\epsEH}_\lambda(T^\circ)} \Bigl(\prod_{\nu \in T^\circ} 2^{-\ell_\nu \tilde \eta(\nu)}\Bigr) \Bigl(\prod_{\nu \in T^\circ_{\Gamma}} 2^{\ell_\nu|\s| / 2}\Bigr) \lesssim (\lambda \vee \epsEH)^{|\hat \eta|},
\end{equation}
where $|\hat \eta|$ is computed in \eqref{eq:eta-hat-length}.
\end{lemma}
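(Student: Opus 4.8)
The plan is to deduce \eqref{eq:bound-for-eta} directly from \cite[Lemma~A.10]{HQ18} after absorbing the second product into the exponent of the first. The key algebraic observation, which I would record first, is that by the very definition of $\hat\eta$ preceding Lemma~\ref{lem:eta-satisfies} (namely $\hat\eta(\nu) = \tilde\eta(\nu)$ for $\nu \notin T^\circ_\Gamma$ and $\hat\eta(\nu) = \tilde\eta(\nu) - |\s|/2$ for $\nu \in T^\circ_\Gamma$) one has, for every labeling $\ell$,
\[
\Bigl(\prod_{\nu \in T^\circ} 2^{-\ell_\nu \tilde\eta(\nu)}\Bigr)\Bigl(\prod_{\nu \in T^\circ_\Gamma} 2^{\ell_\nu |\s| / 2}\Bigr) = \prod_{\nu \in T^\circ} 2^{-\ell_\nu \hat\eta(\nu)},
\]
so that the left-hand side of \eqref{eq:bound-for-eta} is nothing but $\sum_{\ell \in \CN^{\epsEH}_\lambda(T^\circ)} \prod_{\nu \in T^\circ} 2^{-\ell_\nu \hat\eta(\nu)}$, which is exactly a sum of the type estimated in \cite[Appendix~A.2]{HQ18}.

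Next I would invoke Lemma~\ref{lem:eta-satisfies}, which states precisely that $\hat\eta$ satisfies the hypotheses of \cite[Lemma~A.10]{HQ18} (the positivity of the up-sums $\sum_{v \geq \nu} \hat\eta(v)$ for every $\nu \in T^\circ$ and the negativity of the down-sums $\sum_{v \not\geq \nu} \hat\eta(v)$ for $\nu$ below the distinguished node, whenever these are non-empty), together with the length formula \eqref{eq:eta-hat-length}. Granting these, \cite[Lemma~A.10]{HQ18} yields a geometric-series bound whose exponent is $|\hat\eta| = \sum_{\nu \in T^\circ}\hat\eta(\nu)$ and whose base is the scale at the root edge; combining this with \eqref{eq:eta-hat-length} gives the claimed $(\lambda \vee \epsEH)^{|\hat\eta|}$.

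The only point that is not a verbatim citation --- and where I expect the real work to lie --- is that the index set here, $\CN^{\epsEH}_\lambda(T^\circ)$, is not literally the one used in \cite{HQ18}: every scale $\ell_\nu$ is truncated at $N = -\lfloor \log_2 \epsEH \rfloor$, i.e.\ $2^{-\ell_\nu} \geq \epsEH$, and the root constraint reads $2^{-\ell_{\star \wedge v^\uparrow_\star}} \leq \lambda \vee \epsEH$. I would therefore re-run the multiscale summation of \cite[Lemma~A.10]{HQ18} on this truncated index set: the up-sum positivity makes each $\ell_\nu$ summable geometrically towards $+\infty$ (the upper truncation at $N$ only shrinks the sum), the down-sum negativity makes the labels summable from the root outwards, and one gains a factor $\lambda \vee \epsEH$ per unit of $|\hat\eta|$ from the root constraint --- the $\epsEH$-lower cutoff on all scales moreover makes every partial sum manifestly finite, so no borderline divergence can occur. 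The bookkeeping that has to be checked carefully is that the exponent produced by this telescoping is exactly $\sum_{\nu \in T^\circ} \hat\eta(\nu)$ and that the $\epsEH$-truncation does not cost any additional negative power of $\epsEH$; I expect this deterministic adaptation of the \cite{HQ18} argument, rather than any new idea, to constitute the bulk of the proof.
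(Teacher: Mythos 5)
Your proposal matches the paper's proof exactly: rewrite the integrand as $\prod_{\nu\in T^\circ} 2^{-\ell_\nu\hat\eta(\nu)}$ via the definition of $\hat\eta$, invoke Lemma~\ref{lem:eta-satisfies} to verify the hypotheses of \cite[Lemma~A.10]{HQ18}, and cite \cite[Equation~A.29]{HQ18} together with the length formula \eqref{eq:eta-hat-length} to conclude. The only difference is that you flag the $\epsEH$-truncation of the index set as a point requiring care; the paper omits this entirely, implicitly because the truncation only discards nonnegative terms from the multiscale sum and hence cannot spoil the bound.
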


\begin{proof}
Using the function $\hat \eta$, defined above Lemma~\ref{lem:eta-satisfies}, we can write the left-hand side of \eqref{eq:bound-for-eta} as
\begin{equation*}
\sum_{\ell \in \CN^{\epsEH}_\lambda(T^\circ)} \prod_{\nu \in T^\circ} 2^{-\ell_\nu \hat \eta_\nu}.
\end{equation*}
Then Lemma~\ref{lem:eta-satisfies} implies that the function $\hat \eta$ satisfies the assumptions of \cite[Lemma~A.10]{HQ18}, and \cite[Equation~A.29]{HQ18} allows to bound the left-hand side of \eqref{eq:bound-for-eta} by $\lambda^{|\hat \eta|}$, where $|\hat \eta|$ is computed in \eqref{eq:eta-hat-length}.
\end{proof}

\subsection{Proof of Theorem~\ref{thm:convolutions}}
\label{sec:proof_bound}

We use formulas \eqref{eq:integral-and-sigmas} and \eqref{e:wantedBound4}, and apply Theorem~\ref{thm:integral-bound-renorm} to each term:
\begin{align}
	&\E_p \sup_{t \in \R_+} \bigl| (\CI^{\eps, \L}_{\gamma} \CK^{\lambda, \eps}_\CCG)_t \bigr| \nonumber \\
	&\qquad \lesssim_{p} \sum_{\sigma \in \Sigma_\ga} \sum_{\substack{\bp \in \SP_{\!m} :\\ \bp^{-1}(1) \cap \Gamma(\ga) = \emptyset, \\ \L^{-1}(\triangledown) \subset \bp^{-1}(1)}} \sum_{(T, \ell) \in \CCT^{\epsEH}_{\!\lambda}(\tilde\CCV)} \sum_{\n \in \CN^{\epsEH}_\gamma(T, \ell)} \eps^{\alpha_\ga(\bp)} \Vert (\tilde \CK^{\eps, \n}_{\CCG_\gamma})^{\gamma, \sigma} \Vert_{L^{\bp}_\eps}^{\beta_{\ga, p}(\bp)} \label{eq:main-bound-proof} \\
	&\hspace{4cm} \times \Biggl(\prod_{\substack{i \in \bp^{-1}(\infty) \setminus \Gamma_{\!1}(\ga) : \\ i \geq 2}} \eps^{-\kappa_{\ga, i}(\bp)} \Vert (\tilde \CK^{\eps, \n}_{\CCG_\gamma})^{\gamma, \sigma} \Vert^{\beta_{\ga^{\smallgeq i}, p}(\bp^{\smallgeq i})}_{\SC_{s_{i}}^1(L^\infty_\eps)}\Biggr)^{\frac{1}{p}}, \nonumber
\end{align}
for some constants $\kappa_{\ga, i}(\bp) > 0$. Next, we are going to bound the terms in the sum in \eqref{eq:main-bound-proof} for different functions $\bp$.

If the function $\bp$ satisfies $\bp^{-1}(\infty) = \emptyset$, then we have $\beta_{\ga, p}(\bp) = 1$ and the product in the parentheses in \eqref{eq:main-bound-proof} equals $1$. Then the inner double sum in \eqref{eq:main-bound-proof} simplifies to 
\begin{equation*}
\eps^{\alpha_\ga(\bp)} \sum_{(T, \ell) \in \CCT^{\epsEH}_{\!\lambda}(\tilde\CCV)} \sum_{\n \in \CN^{\epsEH}_\gamma(T, \ell)} \Vert (\tilde \CK^{\eps, \n}_{\CCG_\gamma})^{\gamma, \sigma} \Vert_{L^{\bp}_\eps}.
\end{equation*}
Using Lemma~\ref{lem:KTildeBound}, we bound this expression by a constant multiple of 
\begin{equation*}
\eps^{\alpha_\ga(\bp)} \epsEH^{- \delta_{\gamma}(\bp)} \sum_{(T, \ell) \in \CCT^{\epsEH}_{\!\lambda}(\tilde\CCV)} \sum_{\n \in \CN^{\epsEH}_\gamma(T, \ell)}\Bigl(\prod_{\nu \in T^\circ} 2^{-\ell_\nu \tilde \eta(\nu)}\Bigr) \Bigl(\prod_{\nu \in T_{\Gamma}^\circ} 2^{\ell_{\nu}|\s| }\Bigr)^{\frac{1}{2}}.
\end{equation*}
Lemma~\ref{lem:bound-for-eta} allows to bound this expression by a constant times $\eps^{\alpha_\ga(\bp)} \epsEH^{- \delta_{\gamma}(\bp)} (\lambda \vee \epsEH)^{\nu_\gamma}$, with $\nu_\gamma$ defined in \eqref{eq:alpha-gamma}.

Let $\bp^{-1}(\infty) \neq \emptyset$. We can bound the norm $\Vert (\tilde \CK^{\eps, \n}_{\CCG_\gamma})^{\gamma, \sigma} \Vert_{\SC_{s_{i}}^1(L^\infty_\eps)}$ defined in \eqref{eq:norm-for-F}. More precisely, Lemma~\ref{lem:KTildeBound} yields 
\begin{equation}\label{eq:norm-C1}
\Vert \partial^k_{s_{i}} (\tilde \CK^{\eps, \n}_{\CCG_\gamma})^{\gamma, \sigma} \Vert_{L^\infty_\eps} \lesssim \epsEH^{- \delta_{\gamma}(\infty) - 2k} \Bigl(\prod_{\nu \in T^\circ} 2^{-\ell_\nu \tilde \eta(\nu)}\Bigr) \Bigl(\prod_{\nu \in T_{\Gamma}^\circ} 2^{\ell_{\nu}|\s| }\Bigr)^{\frac{1}{2}}, 
\end{equation}
for $k = 0$ and $k = 1$, where we write $\delta_{\gamma}(\infty)$ for the constant \eqref{eq:delta-gamma-p} defined via the function $\bp \equiv \infty$, and where the derivative $\partial_{s_{i}}$ gives the multiplier $\epsEH^{-2}$ (as follows from the scaling properties of the kernels). The norm \eqref{eq:norm-C1} can be brutally bounded by a negative power of $\epsEH \gtrsim \eps$, and hence the whole expression in the parentheses in \eqref{eq:main-bound-proof} can be bounded by $\eps^{-\kappa}$ for some $\kappa \geq 0$. Hence, the inner double sum in \eqref{eq:main-bound-proof} is estimated by 
\begin{equation*}
 \sum_{(T, \ell) \in \CCT^{\epsEH}_{\!\lambda}(\tilde\CCV)} \sum_{\n \in \CN^{\epsEH}_\gamma(T, \ell)} \eps^{\alpha_\ga(\bp) -\frac{\kappa}{p}} \Vert (\tilde \CK^{\eps, \n}_{\CCG_\gamma})^{\gamma, \sigma} \Vert_{L^{\bp}_\eps}^{\beta_{\ga, p}(\bp)}.
\end{equation*}
Since $\beta_{\ga, p}(\bp) \leq 1$ (see \eqref{eq:beta-power}), we use Jensen's inequality to estimate this expression by a constant times 
\begin{equation*}
 \eps^{\alpha_\ga(\bp) -\frac{\kappa}{p}} \biggl(\sum_{(T, \ell) \in \CCT^{\epsEH}_{\!\lambda}(\tilde\CCV)} \sum_{\n \in \CN^{\epsEH}_\gamma(T, \ell)} \Vert (\tilde \CK^{\eps, \n}_{\CCG_\gamma})^{\gamma, \sigma} \Vert_{L^{\bp}_\eps}\biggr)^{\beta_{\ga, p}(\bp)}.
\end{equation*}
We use Lemmas~\ref{lem:KTildeBound} and \ref{lem:bound-for-eta} to bound the double sum by $\epsEH^{- \delta_{\gamma}(\bp)} (\lambda \vee \epsEH)^{\nu_\gamma}$, and we bound the preceding expression by
\begin{equation*}
\eps^{\alpha_\ga(\bp) -\frac{\kappa}{p}} \Bigl(\epsEH^{- \delta_{\gamma}(\bp)} (\lambda \vee \epsEH)^{\nu_\gamma}\Bigr)^{\beta_{\ga, p}(\bp)}.
\end{equation*}
The expression in the parentheses is smaller than one (recall that we assumed $\nu_\gamma < 0$), and this expression can be estimated by $\eps^{\alpha_\ga(\bp) - \frac{\kappa}{p}} \epsEH^{- \delta_{\gamma}(\bp)} (\lambda \vee \epsEH)^{\nu_\gamma}$. Taking $p$ sufficiently large, we get the required bound \eqref{e:genconvBound}. 

\section{Application to a discrete martingale model}
\label{sec:applicationStochQuant}

This section is a showcase of the theory we developed in this paper. We introduce a family of martingales indexed by points of the lattice $\Le$; we then build trees as iterated integrals against the martingales themselves; and we finally apply our theory to prove uniform bounds.

The martingales in this section are chosen to resemble those that appear in our companion paper \cite{3dIsingKac}, where we prove convergence of the dynamical Ising-Kac model to $\Phi^4_3$ and this proof of convergence is what motivated the development of the theory here in the first place. We have therefore chosen to present a family of martingales which is both simpler and similar to the one found in the Ising-Kac model. In this way, we aim to give the reader a concrete and easy example of how the theory above can be applied.

For the proof of convergence in our companion paper \cite{3dIsingKac}, we use the theory of regularity structures \cite{Regularity} (see also \cite{Book, Notes}), together with the discretisation framework by \cite{EH19}. We prefer not to reintroduce all the concepts developed in these articles. Generally speaking, however, the theory of regularity structures is used as a solution theory for the (continuous) $\Phi^4_3$ equation, while the discretisation framework by \cite{EH19} gives us a solution theory for the discrete Ising-Kac model which preserves the formalism of regularity structures; \cite{EH19} also supplies us with some convergence tools, while our theory develops the missing tool for convergence of models, namely uniform boundedness in the scaling parameter. 

As follows from \cite{Regularity}, the regularity structure for the $\Phi^4_3$ equation has a basis which is convenient to write as formal expressions, which are written using the symbols $\Xi$, $\CI$ and $X_i$, $i = 0, \ldots, 3$. Here, the symbol $\Xi$ corresponds to the driving noise of the equation, $\CI$ corresponds to the convolution map with respect to the heat kernel, and $X_i$ are the time-space variables. For example, the expression $\Psi := \CI(\Xi)$ corresponds to the convolution of the heat kernel with the driving noise. The first several basis elements of the regularity structure are $\Xi$, $\Psi$, $\Psi^2$, $\Psi^3$, $\Psi^2 X_i$, $\CI(\Psi^3) \Psi$, $\CI(\Psi^3) \Psi^2$, $\CI(\Psi^3) \Psi^3$. In this section we will prove moment bounds for a discrete model acting only on the elements $\Xi$, $\Psi$, $\Psi^2$, $\CI(\Psi^3) \Psi^2$, which we believe are the most interesting. We refer the reader to our companion paper \cite{3dIsingKac} for a full description of the regularity structure for the Ising-Kac model.

\medskip
A \emph{model} is a pair of linear maps $(\Pi, \Gamma)$ on a regularity structure, which map the basis elements into functions/distributions. These maps are required to have certain algebraic and analytic properties which can be found in \cite{Regularity}. In this section we will consider a discrete model (in the sense of \cite{EH19}) and, more precisely, only a discretisation of the map $\Pi$. For this, we need to make some definitions. 

For any $\alpha \in (0, 1)$, we define $\epsEH := \eps^\al$ and the function $\psi_{\epsEH}: \R^3 \to \R_+$ by $\psi_{\epsEH}(x) := \epsEH^{-3} \psi( \epsEH^{-1} x )$, with $\psi$ being a smooth function, supported in the ball centered at the origin and of radius $1$, and satisfying $\int_{\R^3} \psi(x) \d x = 1$. While any $\al \in (0, 1)$ works for our purposes, we choose $\al = 3/4$ to be in the setting of \cite{3dIsingKac}. Then for $t \geq 0$ and $x \in \Lambda_\eps := (\eps \Z \slash \Z)^3$ we define the martingale 
\begin{equation}\label{eq:M-application}
	\mathcal{M}_{\eps}(t, x) = \frac{1}{\sqrt 2} \eps^{\frac{5}{2}} \sum_{y \in \Le} \psi_{\epsEH}(x-y) \Big( \CP_{\eps^{-2}t}(\eps^{-1} y) - \tilde \CP_{\eps^{-2}t}(\eps^{-1} y) \Big),
 \end{equation}
where $\CP_t(x)$ and $\tilde \CP_t(x)$ are independent Poisson processes of intensities $1$. We extend these martingales periodically to $x \in \eps \Z^3$ and we extend them to $\R$ in time as in \eqref{eq:martingale-extension}. We denote the new space-time domain by $D_\eps := \R \times \eps \Z^3$.

As mentioned above, we want to make the family of martingales \eqref{eq:M-application} as similar as possible to the family of martingales of the Ising-Kac interaction system; and, by the choice of $\al = \frac{3}{4}$, the two families of martingales have the same limiting behaviour as $\eps \to 0$. We refer the reader to the \cite{3dIsingKac} for a more detailed explanation. 

Using these martingales, we are going to define a discretisation of the map $\Pi$, which we denote by $\hat \Pi^{\eps}$. As we mentioned above, we will bound this map only for the four elements $\Xi$, $\Psi$, $\Psi^2$ and $\CI(\Psi^3) \Psi^2$ of the regularity structure, and a complete analysis of the map in a similar context is performed in \cite{3dIsingKac}. For every fixed $z \in D_\eps$ the action of this map on the element $\Xi$ is defined as 
\begin{equation}\label{eq:Pi-and-Xi}
\bigl(\hat \Pi^{\eps}_z \Xi\bigr)(\bar z) = \d \CM_{\eps}(\bar z),
\end{equation}
which means that for every test function $\phi : \R^4 \to \R$ we have 
\begin{equation*}
 \iota_\eps \bigl(\hat \Pi_{z}^{\eps}\Xi\bigr)(\phi) = \int_{D_\eps} \!\! \phi (\bar z)\, \d \CM_{\eps}(\bar z),
\end{equation*}
where we used the extension \eqref{eq:extension}, the expression on the left-hand side means the duality pairing of the distribution $\iota_\eps (\hat \Pi_{z}^{\eps}\Xi)$ with the test function $\phi$, and where the integral with respect to the martingale is defined as in \eqref{eq:integral-wrt-extension}.

Let $P(t,x) := \frac{1}{(4 \pi t)^{3/2}} e^{- |x|/(4t)}$ be the heat kernel on $\R^3$. In order to integrate it with respect to the martingales $\CM_{\eps}$, we need to remove the singularity of $P$ at the origin. For this, we will convolve $P$ with a smooth function. More precisely, let us take any smooth function $\tilde \psi : \R^4 \to \R$, supported in the unit ball with the center at the origin and which satisfies $\int_{\R^4} \tilde \psi(z)\, \d z = 1$. Let us set $\tilde \psi_{\epsEH}(t,x) := \epsEH^{-5} \tilde \psi(\epsEH^{-2} t, \epsEH^{-1} x)$. Then we define a smoothened heat kernel $P^\eps := P * \tilde \psi_\epsEH$, where the convolution is over $\R^4$. As follows from \cite[Lemma~7.7]{Regularity}, we can write $P^\eps = K^\eps + R^\eps$, where $K^\eps$ is a compactly supported singular part of the kernel (i.e. $K^\eps(0)$ diverges as $\eps \to 0$) and $R^\eps$ is smooth. Then we set 
\begin{equation}\label{eq:Pi-and-Psi}
\bigl(\hat \Pi_{z}^{\eps}\Psi\bigr)(\bar z) = \int_{D_\eps} \!\! K^\eps(\bar z - \tilde z) \, \d \CM_{\eps}(\tilde z),
\end{equation}
where we recall that $\Psi = \CI(\Xi)$. 

We will also use the kernel $K^\epsEH := K^\eps \star_\eps \psi_{\epsEH}$, where $\star_\eps$ is the convolution on $\eps \Z^3$. Then we set 
\begin{equation}\label{eq:Pi-and-Psi2}
\bigl(\hat \Pi_{z}^{\eps}\Psi^2\bigr)(\bar z) = \bigl(\hat \Pi_{z}^{\eps}\Psi\bigr)(\bar z)^2 - C^\eps_1,
\end{equation}
with the renormalisation constant 
\begin{equation}\label{e:Phi_C1}
 C^\eps_1 = \int_{D_\eps} \!\! K^\epsEH(z)^2 \, \d z.
\end{equation}
It is not difficult to see that $\hat \Pi_{z}^{\eps}\Psi$ converges to a distribution as $\eps \to 0$. This implies that the product $(\hat \Pi_{z}^{\eps}\Psi^2)(\bar z)^2$ diverges in the limit, and in order to have a non-trivial limit we need to renormalise the product by subtracting the divergent constant $ C^\eps_1$. The precise formula for this constant will be explained in Section~\ref{sec:symbol-3} below.

Finally, for the element $\CI(\Psi^3) \Psi^2$ we set 
\begin{equation}\label{eq:Pi-and-last}
\bigl(\hat \Pi_{z}^{\eps} \CI(\Psi^3) \Psi^2\bigr)(\bar z) = \bigl(\hat \Pi_{z}^{\eps}\Psi\bigr)(\bar z)^2 \int_{D_\eps} \!\! \bigl(K^\eps(\bar z - \tilde z) - K^\eps(z - \tilde z)\bigr) \bigl(\hat \Pi_{z}^{\eps}\Psi\bigr)(\tilde z)^3 \, \d \tilde z - 3 C^\eps_2\, (\hat \Pi_{z}^{\eps}\Psi)(\bar z),
\end{equation}
where the new renormalisation constant is
\begin{equation}\label{e:Phi_C2}
 C^\eps_2 = 2 \int_{D_\eps} \int_{D_\eps} \int_{D_\eps} \!\! K^\epsEH(z_1) K^\epsEH(z_1-z_3) K^\epsEH(z_2) K^\epsEH(z_2-z_3) K^\epsEH(z_3) \, \d z_1\, \d z_2\, \d z_3.
\end{equation}
Again, we need to subtract the renormalisation constant to have non-divergent moment bounds for the function. The formula for the renormalisation constant is explained in Section~\ref{sec:last_symbol} below.

For a fixed $\kappa > 0$, we assign to these four basis elements a \emph{homogeneity} $| \bigcdot |$ as
\[ | \Xi | = - \frac52 - \kappa, \qquad | \Psi | = - \frac12 - \kappa, \qquad | \Psi^2 | = -1 - 2 \kappa, \qquad | \CI(\Psi^3) \Psi^2 | = -\frac12 - 5 \kappa. \]
Let $\tau$ be one of these elements. We are going to prove that for some $\bar \kappa > 0$, any $p \geq 2$ and any test function $\phi : \R^4 \to \R$ the following bound holds:
\begin{equation}\label{e:model_bound}
	\E_p \bigl| \iota_\eps \bigl(\hat \Pi_{z}^{\eps}\tau\bigr)(\phi^\lambda_z)\bigr| \leq C (\lambda \vee \epsEH)^{|\tau| + \bar \kappa},
\end{equation}
uniformly in $z \in D_\eps$, $\lambda \in (0,1]$ and $\eps \in (0,1]$, where we use the extension \eqref{eq:extension} and a recentered and rescaled test function \eqref{eq:test-function}. The constant $C$ in this bound may depend on $p$. 
\medskip

For every element $\tau \in \{\Xi, \Psi, \Psi^2, \CI(\Psi^3) \Psi^2\}$, we use \eqref{eq:expansion} to write $(\hat \Pi_{z}^{\eps}\tau)(\phi_z^\lambda)$ as a sum of terms of the form
\begin{equation*}
	\int_{D_\eps} \phi_z^\lambda(\bar z) \biggl(\int_{\CD_\ga} F_{\bar z} (z_1, \ldots, z_n) \, \d \CM^{n}_{\eps} (z_1, \ldots, z_n)\biggr) \d \bar z,
\end{equation*}
where the measure $\CM^{n}_{\eps}$ is the product measure built from $\CM_\eps$ as in \eqref{eq:expansion2}, and the function $F$ and the contraction $\gamma$ (with $n$ components) will be specified case by case. In order to bound such terms, we are going to use Corollary~\ref{cor:convolutions}. For this, by analogy with the Ising-Kac model in \cite{IsingKac}, we use the definition \eqref{eq:M-application} and rewrite the previous expression in the form
\begin{equation*}
	\int_{D_\eps} \phi_z^\lambda(\bar z) \biggl(\int_{\CD_\ga} \Big( \big( F \ae^n \psi_\epsEH \big)_{\bar z} (z_1, \ldots, z_n) \Big) \, \d \bM^{n}_{\eps} (z_1, \ldots, z_n)\biggr) \d \bar z,
\end{equation*}
where now the measure $\bM^{n}_{\eps}$ is built as in \eqref{eq:expansion2} using the family of martingales 
\begin{equation}\label{eq:M-application-convolved}
	\M_{\eps}(t, x) := \frac{1}{\sqrt 2} \eps^{-\frac{1}{2}} \bigl(\CP_{\eps^{-2}t}(\eps^{-1} x) - \tilde \CP_{\eps^{-2}t}(\eps^{-1} x)\bigr)
\end{equation}
 and where $F \ae^n \psi_\epsEH$ is the discrete convolution of $F$ against the function $\psi_\epsEH$ in each of the variables of $F$. In particular, $\M_{\eps}$ are \cadlag martingales, satisfying Assumption~\ref{a:Martingales} with $\kone = -\frac12$ and $\C_\eps \equiv \Cd_\eps \equiv 1$. We note that we can replace the Poisson processes in \eqref{eq:M-application} and in \eqref{eq:M-application-convolved} by their compensated versions, because the integrals of their intensities cancel each other.

Additionally, it is convenient to use graphical notation to represent the function $F$ and the integrals. In the graphical notation, nodes represent variables and arrows represent kernels. The vertex ``\,\tikz[baseline=-3] \node [root] {};\,'' labelled with $z$ represents the basis point $z \in D_\eps$. The arrow ``\,\tikz[baseline=-0.1cm] \draw[testfcn] (1,0) to (0,0);\,'' represents a test function $\phi^\lambda_z$. The arrow ``\,\tikz[baseline=-0.1cm] \draw[keps] (0,0) to (1,0);\,'' represents either the discrete kernel $K^{\eps}$ or $K^{\epsEH}$, and we will write two labels $(a_e, r_e)$ on this arrow, which correspond to the labels on graphs as described in Section~\ref{sec:GeneralizedConvolutions}. More precisely, since the kernels $K^{\eps}$  and $K^{\epsEH}$ satisfy the bound \eqref{eq:K_bound} with $a_e=3$ (this follows from \cite[Lemma~7.7]{Regularity}), Lemma~\ref{lem:K_bound} implies that the kernels $K^{\eps}$  and $K^{\epsEH}$ have all the properties from Assumption~\ref{a:Kernels} with the values $a_e=3$ and $r_e=0$. Hence, we will depict this kernel by ``\,\tikz[baseline=-0.1cm] \draw[keps] (0,0) to node[labl,pos=0.45] {\tiny 3,0} (1,0);\,''. Whenever a contracted variable $z_i$ is integrated with respect to the measure $\bM^{n}_{\eps}$ with $n \geq 2$, we denote it by a node ``\,\tikz[baseline=-3] \node [var_very_blue] {};\,''. Moreover, the variable integrated with respect to $\M_{\eps}$ will be denoted by ``\,\tikz[baseline=-3] \node [var_blue] {};\,''. By the node ``\,\tikz[baseline=-3] \node [dot] {};\,'' we denote a variable integrated out in $D_\eps$. 

Using this notation, we will now prove the bounds \eqref{e:model_bound} for each of the four elements $\tau$.

\subsection{The element $\tau = \Xi$} 

The definition \eqref{eq:Pi-and-Xi} yields 
\begin{equation*}
 \iota_\eps (\hat \Pi_{z}^{\eps}\Xi)(\phi_z^\lambda) = \int_{D_\eps} \!\! \big( \phi_z^\lambda \ae \psi_\epsEH \big) (\bar z)\, \d \M_{\eps}(\bar z) .
\end{equation*}
Lemma~\ref{lem:Wiener} now gives the required bound \eqref{e:model_bound} with $|\Xi| = - \frac{5}{2} - \kappa$.

\subsection{The element $\tau = \Psi$}

Using \eqref{eq:Pi-and-Psi}, we can represent the map $\hat \Pi_{z}^{\eps}\tau$ diagrammatically as
\begin{equation*}
\iota_\eps (\hat \Pi_{z}^{\eps}\tau)(\phi_z^\lambda)
\;=\; 
\begin{tikzpicture}[scale=0.35, baseline=0cm]
	\node at (0.9,0.2)  [root] (root) {};
	\node at (0.9,0.2) [rootlab] {$z$};
	\node at (-2.3, 0.2)  [dot] (int) {};
	\node at (-5.5,0.2)  [var_blue] (left) {};	
	\draw[testfcn] (int) to (root);	
	\draw[keps] (left) to node[labl,pos=0.45] {\tiny 3,0} (int);
\end{tikzpicture}\;.
\end{equation*}
This diagram is the stochastic integral $\CI^{\eps} (F)$, where the kernel $F$ is in this case the generalised convolution $\CK^{\lambda, \eps}_{\CCG, z}$, as in \eqref{e:genconv_shift}, given by
\begin{equation*}
\CK^{\lambda, \eps}_{\CCG, z}(z^{\var}) = \int_{D_\eps} \!\!\phi_z^\lambda(\bar z)\, K^{\epsEH}(\bar z - z^{\var})\, \d \bar z.
\end{equation*}
One can check that Assumption~\ref{a:mainContraction} is satisfied for this diagram with a trivial contraction: from the diagram we see that $| \hat \CCV_{\!\var}| = 1$ and $|\hat \CCV_{\!\bar\star} \setminus \{ v^\uparrow_{\!\star} \}| = 1$. The space-time scaling is $\s = (2, 1, 1, 1)$, so that $|\s|=5$ and the value of the constant $\nu_\gamma$ in \eqref{e:genconvBound} is $-\frac{1}{2}$. Applying Corollary~\ref{cor:convolutions} with $\Gamma = \{ 1 \}$ and recalling that $|\tau| = -\frac{1}{2}-\kappa$, one obtains the bound 
\begin{equation*}
\Bigl( \E \bigl| \iota_\eps ( \hat \Pi^\eps_z \tau)(\phi^\lambda_z)\bigr|^p\Bigr)^{\frac{1}{p}} \lesssim (\lambda \vee \epsEH)^{- \frac{1}{2}} \big( 1 + \eps^{\frac52 - \theta} \epsEH^{-\frac52} \big),
\end{equation*}
for any $\theta > 0$. As such, we immediately get \eqref{e:model_bound} for the element $\tau$.

In what follows we always have $|\s|=5$ and we prefer not to specify contractions $\gamma$ every time, as it will be clear from diagrams. 

\subsection{The element $\tau = \Psi^2$}
\label{sec:symbol-3}

Taking into account the renormalisation in \eqref{eq:Pi-and-Psi2}, the map $\hat \Pi_{z}^{\eps}\tau$ can be represented by the diagrams
\begin{equation}\label{e:Pi2}
	\iota_\eps (\hat \Pi_{z}^{\eps}\tau)(\phi_z^\lambda)
	\;=\; 
	\begin{tikzpicture}[scale=0.35, baseline=0cm]
		\node at (0,-2.2)  [root] (root) {};
		\node at (0,-2.2) [rootlab] {$z$};
		\node at (0,-2.5) {$$};
		\node at (0,0)  [dot] (int) {};
		\node at (-1.5,2.5)  [var_blue] (left) {};
		\node at (1.5,2.5)  [var_blue] (right) {};
		
		\draw[testfcn] (int) to (root);
		
		\draw[keps] (left) to node[labl,pos=0.45] {\tiny 3,0} (int);
		\draw[keps] (right) to node[labl,pos=0.45] {\tiny 3,0} (int);
	\end{tikzpicture}
	\; +\;
	\begin{tikzpicture}[scale=0.35, baseline=0cm]
		\node at (0,-2.2)  [root] (root) {};
		\node at (0,-2.2) [rootlab] {$z$};
		\node at (0,0)  [dot] (int) {};
		\node at (0,2.5)  [var_very_blue] (left) {};
		
		\draw[testfcn] (int) to (root);
		
		\draw[keps] (left) to[bend left=60] node[labl,pos=0.45] {\tiny 3,0} (int);
		\draw[keps] (left) to[bend left=-60] node[labl,pos=0.45] {\tiny 3,0} (int);
	\end{tikzpicture}
	\; -\; C^\eps_1\, 
	\begin{tikzpicture}[scale=0.35, baseline=-0.5cm]
		\node at (0,-2.2)  [root] (root) {};
		\node at (0,-2.2) [rootlab] {$z$};
		\node at (0,0)  [dot] (int) {};
		
		\draw[testfcn] (int) to (root);
	\end{tikzpicture}\;.
\end{equation}
where the renormalisation constant is given in \eqref{e:Phi_C1}. Let us denote by ``\,\tikz[baseline=-3] \node [var_red_square] {};\,'' the variable which is integrated with respect to the martingales $t \mapsto ( [ \M_\eps(x) ]_t - \langle \M_\eps(x) \rangle_t )_{x \in \Le}$. Then, by \eqref{eq:RenormIntegrals}, our choice of the renormalisation constant allows to write \eqref{e:Pi2} as 
\begin{equation}\label{e:Pi2-new}
\iota_\eps (\hat \Pi_{z}^{\eps}\tau)(\phi_z^\lambda)
\;=\; 
\begin{tikzpicture}[scale=0.35, baseline=0cm]
	\node at (0,-2.2)  [root] (root) {};
	\node at (0,-2.2) [rootlab] {$z$};
	\node at (0,-2.5) {$$};
	\node at (0,0)  [dot] (int) {};
	\node at (-1.5,2.5)  [var_blue] (left) {};
	\node at (1.5,2.5)  [var_blue] (right) {};
	
	\draw[testfcn] (int) to (root);
	
	\draw[keps] (left) to node[labl,pos=0.45] {\tiny 3,0} (int);
	\draw[keps] (right) to node[labl,pos=0.45] {\tiny 3,0} (int);
\end{tikzpicture}
\; +\;
\begin{tikzpicture}[scale=0.35, baseline=0cm]
	\node at (0,-2.2)  [root] (root) {};
	\node at (0,-2.2) [rootlab] {$z$};
	\node at (0,0)  [dot] (int) {};
	\node at (0,2.5)  [var_red_square] (left) {};
	
	\draw[testfcn] (int) to (root);
	
	\draw[keps] (left) to[bend left=60] node[labl,pos=0.45] {\tiny 3,0} (int);
	\draw[keps] (left) to[bend left=-60] node[labl,pos=0.45] {\tiny 3,0} (int);
\end{tikzpicture},
\end{equation}
Let us denote these two diagrams by $\iota_\eps (\hat \Pi_{z}^{\eps, 1}\tau)(\phi_z^\lambda)$ and $\iota_\eps (\hat \Pi_{z}^{\eps, 2}\tau)(\phi_z^\lambda)$ respectively.

Let us start with the first diagram in \eqref{e:Pi2-new}. Assumption~\ref{a:mainContraction} is satisfied for it with a trivial contraction, and the bound \eqref{e:genconvBound} holds with the set $\Gamma = \{1, 2\}$. Furthermore, we have $| \hat \CCV_{\!\var}| = 2$ and $|\hat \CCV_{\!\bar\star} \setminus \{ v^\uparrow_{\!\star} \}| = 2$ and the value of the constant $\nu_\gamma$ in \eqref{e:genconvBound} is $-1$. Applying Corollary~\ref{cor:convolutions} to this diagram, we get the bound 
\begin{equation*}
 \Bigl(\E \bigl| \iota_\eps( \hat \Pi^{\eps, 1}_z \tau)(\phi^\lambda_z)\bigr|^p\Bigr)^{\frac{1}{p}} \lesssim (\lambda \vee \epsEH)^{-1} \bigl(1 + \eps^{\frac52 - \theta} \epsEH^{-\frac52} + \eps^{5 - \theta} \epsEH^{-5}\bigr).
\end{equation*}
Recalling that $|\tau| = -1-2\kappa$ and that $\eps$ is smaller than $\epsEH$, we get the bound \eqref{e:model_bound}.

The second diagram in \eqref{e:Pi2-new} does not satisfy Assumption~\ref{a:mainContraction}, because the kernels have very strong singularities. To solve this problem, we notice that multiplication of a kernel by a positive power of $\eps$ decreases the order of singularity in \eqref{eq:K_bound}. Hence, for any $0 < a < 3$ we can write the last diagram in \eqref{e:Pi2-new} as
\begin{equation}\label{eq:cherry-contracted} \eps^{2 (a - 3)}
\begin{tikzpicture}[scale=0.35, baseline=0cm]
	\node at (0,-2.2)  [root] (root) {};
	\node at (0,-2.2) [rootlab] {$z$};
	\node at (0,-2.5) {$$};
	\node at (0,0)  [dot] (int) {};
	\node at (0,2.5)  [var_red_square] (left) {};
	
	\draw[testfcn] (int) to (root);
	
	\draw[keps] (left) to[bend left=60] node[labl,pos=0.45] {\tiny a,0} (int);
	\draw[keps] (left) to[bend left=-60] node[labl,pos=0.45] {\tiny a,0} (int);
\end{tikzpicture},
\end{equation}
where we multiplied each kernel by $\eps^{3-a}$. For $a < \frac{5}{2}$ Assumption~\ref{a:mainContraction} is satisfied and Corollary~\ref{cor:convolutions} yields 
\begin{equation*}
	\Bigl(\E \bigl|\iota_\eps( \hat \Pi^{\eps, 2}_z \tau)(\phi^\lambda_z)\bigr|^p\Bigr)^{\frac{1}{p}} \lesssim \eps^{2a - 6} (\lambda \vee \epsEH)^{\frac52 - 2a} \big( \eps^{\frac52} + \eps^{5 - \theta} \epsEH^{\frac52} \big) \lesssim \eps^{2a - \frac72} .
\end{equation*}
If we choose $a > \frac74$, then this term disappears in the limit. Notice that here the only difference with the case $\tau = \Psi$ is that the coefficient $\alpha_\ga(\bp)$ coming from Corollary~\ref{cor:convolutions} is larger.

\subsection{The element $\tau = \CI(\Psi^3) \Psi^2$} 
\label{sec:last_symbol}

Using the definition \eqref{eq:Pi-and-last}, the diagrams for the map $\hat \Pi_{z}^{\eps}\tau$ are the following:
\begin{align} \nonumber
\iota_\eps &(\hat \Pi_{z}^{\eps}\tau)(\phi_z^\lambda)
\;=\; 
\begin{tikzpicture}[scale=0.35, baseline=-0.5cm]
	\node at (0,-5.1)  [root] (root) {};
	\node at (0,-5.1) [rootlab] {$z$};
	\node at (0,0) [dot] (int) {};
	\node at (-2,2) [var_blue] (left) {};
	\node at (0,2.9)  [var_blue] (cent) {};
	\node at (2,2)  [var_blue] (right) {};
	\node at (0,-2.9) [dot] (cent1) {};
	\node at (2,-0.9)  [var_blue] (right1) {};
	\node at (-2,-0.9)  [var_blue] (left1) {};
	\draw[testfcn] (cent1) to (root);
	\draw[keps] (left) to node[labl,pos=0.45] {\tiny 3,0} (int);
	\draw[keps] (right) to node[labl,pos=0.45] {\tiny 3,0} (int);
	\draw[keps] (cent) to node[labl,pos=0.45] {\tiny 3,0} (int);
	\draw[keps] (int) to node[labl,pos=0.45] {\tiny 3,1} (cent1);
	\draw[keps] (right1) to node[labl,pos=0.45] {\tiny 3,0} (cent1);
	\draw[keps] (left1) to node[labl,pos=0.45] {\tiny 3,0} (cent1);
\end{tikzpicture}
\; +\;3\,
\begin{tikzpicture}[scale=0.35, baseline=-0.5cm]
	\node at (0,-5.1)  [root] (root) {};
	\node at (0,-5.1) [rootlab] {$z$};
	\node at (0,0)  [dot] (int) {};
	\node at (-1.3,2.3)  [var_red_square] (left) {};
	\node at (2.1,2.3)  [var_blue] (right) {};
	\node at (0,-2.9) [dot] (cent1) {};
	\node at (2,-0.9)  [var_blue] (right1) {};
	\node at (-2,-0.9)  [var_blue] (left1) {};
	\draw[testfcn] (cent1) to (root);
	\draw[keps] (left) to[bend left=60] node[labl,pos=0.45] {\tiny 3,0} (int);
	\draw[keps] (left) to[bend left=-60] node[labl,pos=0.45] {\tiny 3,0} (int);
	\draw[keps] (right) to[bend left=30] node[labl,pos=0.45] {\tiny 3,0} (int);
	\draw[keps] (int) to node[labl,pos=0.45] {\tiny 3,1} (cent1);
	\draw[keps] (right1) to node[labl,pos=0.45] {\tiny 3,0} (cent1);
	\draw[keps] (left1) to node[labl,pos=0.45] {\tiny 3,0} (cent1);
\end{tikzpicture}
\; +\;
\begin{tikzpicture}[scale=0.35, baseline=-0.5cm]
	\node at (0,-5.1)  [root] (root) {};
	\node at (0,-5.1) [rootlab] {$z$};
	\node at (0,0)  [dot] (int) {};
	\node at (0,2.9)  [var_very_blue] (left) {};
	\node at (0,-2.9) [dot] (cent1) {};
	\node at (2,-0.9)  [var_blue] (right1) {};
	\node at (-2,-0.9)  [var_blue] (left1) {};
	\draw[testfcn] (cent1) to (root);
	\draw[keps] (left) to[bend left=90] node[labl,pos=0.4] {\tiny 3,0} (int);
	\draw[keps] (left) to[bend left=-90] node[labl,pos=0.4] {\tiny 3,0} (int);
	\draw[keps] (left) to node[labl,pos=0.6] {\tiny 3,0} (int);
	\draw[keps] (int) to node[labl,pos=0.45] {\tiny 3,1} (cent1);
	\draw[keps] (right1) to node[labl,pos=0.45] {\tiny 3,0} (cent1);
	\draw[keps] (left1) to node[labl,pos=0.45] {\tiny 3,0} (cent1);
\end{tikzpicture}
\; +\;
2\,
\begin{tikzpicture}[scale=0.35, baseline=-0.5cm]
	\node at (0,-5.1)  [root] (root) {};
	\node at (0,-5.1) [rootlab] {$z$};
	\node at (0,0)  [dot] (int) {};
	\node at (0,2.9)  [var_very_blue] (left) {};
	\node at (0,-2.9) [dot] (cent1) {};
	\node at (2,-0.9)  [var_blue] (right1) {};
	\draw[testfcn] (cent1) to (root);
	\draw[keps] (left) to[bend left=90] node[labl,pos=0.4] {\tiny 3,0} (int);
	\draw[keps] (left) to[bend left=-90] node[labl,pos=0.4] {\tiny 3,0} (int);
	\draw[keps] (left) to node[labl,pos=0.6] {\tiny 3,0} (int);
	\draw[keps] (int) to node[labl,pos=0.45] {\tiny 3,1} (cent1);
	\draw[keps] (left) to[bend right=100] node[labl,pos=0.55] {\tiny 3,0} (cent1);
	\draw[keps] (right1) to node[labl,pos=0.45] {\tiny 3,0} (cent1);
\end{tikzpicture}
\; +\;3\,
\begin{tikzpicture}[scale=0.35, baseline=-0.5cm]
	\node at (0,-5.1)  [root] (root) {};
	\node at (0,-5.1) [rootlab] {$z$};
	\node at (0,-4.5) {$$};
	\node at (0,0) [dot] (int) {};
	\node at (-2,2) [var_blue] (left) {};
	\node at (0,2.9)  [var_blue] (cent) {};
	\node at (0,-2.9) [dot] (cent1) {};
	\node at (3,-1.45)  [var_very_blue] (right1) {};
	\draw[testfcn] (cent1) to (root);
	\draw[keps] (left) to node[labl,pos=0.45] {\tiny 3,0} (int);
	\draw[keps] (right1) to[bend left=-30] node[labl,pos=0.45] {\tiny 3,0} (int);
	\draw[keps] (cent) to node[labl,pos=0.45] {\tiny 3,0} (int);
	\draw[keps] (int) to node[labl,pos=0.45] {\tiny 3,1} (cent1);
	\draw[keps] (right1) to[bend left=-30] node[labl,pos=0.45] {\tiny 3,0} (cent1);
	\draw[keps] (right1) to[bend left=30] node[labl,pos=0.45] {\tiny 3,0} (cent1);
\end{tikzpicture}\\[0.5cm] \nonumber
&
\; +\;6\,
\begin{tikzpicture}[scale=0.35, baseline=-0.5cm]
	\node at (0,-5.1)  [root] (root) {};
	\node at (0,-5.1) [rootlab] {$z$};
	\node at (0,0)  [dot] (int) {};
	\node at (-1.3,2.3)  [var_very_blue] (left) {};
	\node at (0,-2.9) [dot] (cent1) {};
	\node at (2.5,-1.45)  [var_very_blue] (right1) {};
	\draw[testfcn] (cent1) to (root);
	\draw[keps] (left) to[bend left=60] node[labl,pos=0.45] {\tiny 3,0} (int);
	\draw[keps] (left) to[bend left=-60] node[labl,pos=0.45] {\tiny 3,0} (int);
	\draw[keps] (right1) to node[labl,pos=0.45] {\tiny 3,0} (int);
	\draw[keps] (int) to node[labl,pos=0.45] {\tiny 3,1} (cent1);
	\draw[keps] (right1) to node[labl,pos=0.45] {\tiny 3,0} (cent1);
	\draw[keps] (left) to[bend left=-80] node[labl,pos=0.55] {\tiny 3,0} (cent1);
\end{tikzpicture}
\; +\;3\,
\begin{tikzpicture}[scale=0.35, baseline=-0.5cm]
	\node at (0,-5.1)  [root] (root) {};
	\node at (0,-5.1) [rootlab] {$z$};
	\node at (0,0)  [dot] (int) {};
	\node at (-1.3,2.3)  [var_red_square] (left) {};
	\node at (0,-2.9) [dot] (cent1) {};
	\node at (3,-1.45) [var_very_blue] (right1) {};
	\draw[testfcn] (cent1) to (root);
	\draw[keps] (left) to[bend left=60] node[labl,pos=0.45] {\tiny 3,0} (int);
	\draw[keps] (left) to[bend left=-60] node[labl,pos=0.45] {\tiny 3,0} (int);
	\draw[keps] (right1) to[bend left=-20] node[labl,pos=0.45] {\tiny 3,0} (int);
	\draw[keps] (int) to node[labl,pos=0.45] {\tiny 3,1} (cent1);
	\draw[keps] (right1) to[bend left=-30] node[labl,pos=0.45] {\tiny 3,0} (cent1);
	\draw[keps] (right1) to[bend left=30] node[labl,pos=0.45] {\tiny 3,0} (cent1);
\end{tikzpicture}
\;+\;6\,
\begin{tikzpicture}[scale=0.35, baseline=-0.5cm]
	\node at (0,-5.1)  [root] (root) {};
	\node at (0,-5.1) [rootlab] {$z$};
	\node at (0,0)  [dot] (int) {};
	\node at (2.5,-1.45)  [var_very_blue] (right1) {};
	\node at (-2,-0.9)  [var_blue] (left1) {};
	\node at (-2,2) [var_blue] (left) {};
	\node at (0,2.9)  [var_blue] (cent) {};
	\draw[testfcn] (cent1) to (root);
	\draw[keps] (left) to node[labl,pos=0.45] {\tiny 3,0} (int);
	\draw[keps] (cent) to node[labl,pos=0.45] {\tiny 3,0} (int);
	\draw[keps] (right1) to node[labl,pos=0.45] {\tiny 3,0} (int);
	\draw[keps] (int) to node[labl,pos=0.45] {\tiny 3,1} (cent1);
	\draw[keps] (right1) to node[labl,pos=0.45] {\tiny 3,0} (cent1);
	\draw[keps] (left1) to node[labl,pos=0.45] {\tiny 3,0} (cent1);
\end{tikzpicture}
\; +\;6\,
\begin{tikzpicture}[scale=0.35, baseline=-0.5cm]
	\node at (0,-5.1)  [root] (root) {};
	\node at (0,-5.1) [rootlab] {$z$};
	\node at (0,0)  [dot] (int) {};
	\node at (-1.3,2.3)  [var_very_blue] (left) {};
	\node at (2.1,2.3)  [var_blue] (right) {};
	\node at (0,-2.9) [dot] (cent1) {};
	\node at (2,-0.9)  [var_blue] (right1) {};
	\draw[testfcn] (cent1) to (root);s
	\draw[keps] (left) to[bend left=60] node[labl,pos=0.45] {\tiny 3,0} (int);
	\draw[keps] (left) to[bend left=-60] node[labl,pos=0.45] {\tiny 3,0} (int);
	\draw[keps] (right) to[bend left=30] node[labl,pos=0.45] {\tiny 3,0} (int);
	\draw[keps] (int) to node[labl,pos=0.45] {\tiny 3,1} (cent1);
	\draw[keps] (left) to[bend left=-80] node[labl,pos=0.45] {\tiny 3,0} (cent1);
	\draw[keps] (right1) to node[labl,pos=0.45] {\tiny 3,0} (cent1);
\end{tikzpicture}
\; +\;
\begin{tikzpicture}[scale=0.35, baseline=-0.5cm]
	\node at (0,-5.1)  [root] (root) {};
	\node at (0,-5.1) [rootlab] {$z$};
	\node at (0,0)  [dot] (int) {};
	\node at (0,2.9)  [var_very_blue] (left) {};
	\node at (0,-2.9) [dot] (cent1) {};
	\draw[testfcn] (cent1) to (root);
	\draw[keps] (left) to[bend left=90] node[labl,pos=0.4] {\tiny 3,0} (int);
	\draw[keps] (left) to[bend left=-90] node[labl,pos=0.4] {\tiny 3,0} (int);
	\draw[keps] (left) to node[labl,pos=0.6] {\tiny 3,0} (int);
	\draw[keps] (int) to node[labl,pos=0.45] {\tiny 3,1} (cent1);
	\draw[keps] (left) to[bend left=100] node[labl,pos=0.55] {\tiny 3,0} (cent1);
	\draw[keps] (left) to[bend right=100] node[labl,pos=0.55] {\tiny 3,0} (cent1);
\end{tikzpicture}\\[0.5cm] \label{eq:last-tree}
&\; +\;3\,
\begin{tikzpicture}[scale=0.35, baseline=-0.5cm]
	\node at (0,-5.1)  [root] (root) {};
	\node at (0,-5.1) [rootlab] {$z$};
	\node at (0,0)  [dot] (int) {};
	\node at (-1.3,2.3)  [var_red_square] (left) {};
	\node at (2.1,2.3)  [var_blue] (right) {};
	\node at (0,-2.9) [dot] (cent1) {};
	\node at (2.9,-1.7) [var_red_square] (right1) {};
	\draw[testfcn] (cent1) to (root);
	\draw[keps] (left) to[bend left=60] node[labl,pos=0.45] {\tiny 3,0} (int);
	\draw[keps] (left) to[bend left=-60] node[labl,pos=0.45] {\tiny 3,0} (int);
	\draw[keps] (int) to node[labl,pos=0.45] {\tiny 3,1} (cent1);
	\draw[keps] (right1) to[bend left=-30] node[labl,pos=0.45] {\tiny 3,0} (cent1);
	\draw[keps] (right1) to[bend left=30] node[labl,pos=0.45] {\tiny 3,0} (cent1);
	\draw[keps] (right) to[bend left=30] node[labl,pos=0.45] {\tiny 3,0} (int);
\end{tikzpicture}
\; +\;6\,
\begin{tikzpicture}[scale=0.35, baseline=-0.5cm]
	\node at (0,-5.1)  [root] (root) {};
	\node at (0,-5.1) [rootlab] {$z$};
	\node at (0,0)  [dot] (int) {};
	\node at (-1.3,2.3)  [var_red_square] (left) {};
	\node at (0,-2.9) [dot] (cent1) {};
	\node at (2.5,-1.45)  [var_very_blue] (right1) {};
	\node at (-2,-0.9)  [var_blue] (left1) {};
	\draw[testfcn] (cent1) to (root);
	\draw[keps] (left) to[bend left=60] node[labl,pos=0.45] {\tiny 3,0} (int);
	\draw[keps] (left) to[bend left=-60] node[labl,pos=0.45] {\tiny 3,0} (int);
	\draw[keps] (right1) to node[labl,pos=0.45] {\tiny 3,0} (int);
	\draw[keps] (int) to node[labl,pos=0.45] {\tiny 3,1} (cent1);
	\draw[keps] (right1) to node[labl,pos=0.45] {\tiny 3,0} (cent1);
	\draw[keps] (left1) to node[labl,pos=0.45] {\tiny 3,0} (cent1);
\end{tikzpicture}
\;+\;
\begin{tikzpicture}[scale=0.35, baseline=-0.5cm]
	\node at (0,-5.1)  [root] (root) {};
	\node at (0,-5.1) [rootlab] {$z$};
	\node at (0,0) [dot] (int) {};
	\node at (-2,2) [var_blue] (left) {};
	\node at (0,2.9) [var_blue] (cent) {};
	\node at (2,2) [var_blue] (right) {};
	\node at (0,-2.9) [dot] (cent1) {};
	\node at (2.9,-1.7) [var_red_square] (right1) {};
	\draw[testfcn] (cent1) to (root);
	\draw[keps] (left) to node[labl,pos=0.45] {\tiny 3,0} (int);
	\draw[keps] (right) to node[labl,pos=0.45] {\tiny 3,0} (int);
	\draw[keps] (cent) to node[labl,pos=0.45] {\tiny 3,0} (int);
	\draw[keps] (int) to node[labl,pos=0.45] {\tiny 3,1} (cent1);
	\draw[keps] (right1) to[bend left=-30] node[labl,pos=0.45] {\tiny 3,0} (cent1);
	\draw[keps] (right1) to[bend left=30] node[labl,pos=0.45] {\tiny 3,0} (cent1);
\end{tikzpicture}
\; +\;
\begin{tikzpicture}[scale=0.35, baseline=-0.5cm]
	\node at (0,-5.1)  [root] (root) {};
	\node at (0,-5.1) [rootlab] {$z$};
	\node at (0,0)  [dot] (int) {};
	\node at (0,2.9)  [var_very_blue] (left) {};
	\node at (0,-2.9) [dot] (cent1) {};
	\node at (2.9,-1.7) [var_red_square] (right1) {};
	\draw[testfcn] (cent1) to (root);
	\draw[keps] (left) to[bend left=90] node[labl,pos=0.4] {\tiny 3,0} (int);
	\draw[keps] (left) to[bend left=-90] node[labl,pos=0.4] {\tiny 3,0} (int);
	\draw[keps] (left) to node[labl,pos=0.6] {\tiny 3,0} (int);
	\draw[keps] (int) to node[labl,pos=0.45] {\tiny 3,1} (cent1);
	\draw[keps] (right1) to[bend left=-30] node[labl,pos=0.45] {\tiny 3,0} (cent1);
	\draw[keps] (right1) to[bend left=30] node[labl,pos=0.45] {\tiny 3,0} (cent1);
\end{tikzpicture} + \\[0.5cm]
& \; +\; 6\;
\begin{tikzpicture}[scale=0.35, baseline=-0.5cm]
	\node at (0,-5.1)  [root] (root) {};
	\node at (0,-5.1) [rootlab] {$z$};
	\node at (0,0) [dot] (int) {};
	\node at (0,2.9)  [var_blue] (cent) {};
	\node at (0,-2.9) [dot] (cent1) {};
	\node at (2.5,-1.45)  [var_very_blue] (right1) {};
	\node at (-2.5,-1.45)  [var_very_blue] (left1) {};
	\draw[testfcn] (cent1) to (root);
	\draw[keps] (left1) to node[labl,pos=0.45] {\tiny 3,0} (int);
	\draw[keps] (right1) to node[labl,pos=0.45] {\tiny 3,0} (int);
	\draw[keps] (cent) to node[labl,pos=0.45] {\tiny 3,0} (int);
	\draw[keps] (int) to node[labl,pos=0.45] {\tiny 3,1} (cent1);
	\draw[keps] (right1) to node[labl,pos=0.45] {\tiny 3,0} (cent1);
	\draw[keps] (left1) to node[labl,pos=0.45] {\tiny 3,0} (cent1);
\end{tikzpicture}
\; -\;3 C^\eps_2\;
\begin{tikzpicture}[scale=0.35, baseline=-0.7cm]
	\node at (0,-5.1)  [root] (root) {};
	\node at (0,-5.1) [rootlab] {$z$};
	\node at (0,0)  [var_blue] (cent) {};
	\node at (0,-2.9) [dot] (cent1) {};
	\draw[testfcn] (cent1) to (root);
	\draw[keps] (cent) to node[labl,pos=0.45] {\tiny 3,0} (cent1);
\end{tikzpicture}\;. \nonumber
\end{align}
All these diagrams, except the tenth and the last two, can be bounded by a direct application of Corollary~\ref{cor:convolutions}.

To bound the tenth diagram (the one that contracts all leaves), we write
\begin{equation*}
	\bM^{5}_\eps ( A_\ga) = \eps^{15} \sum_{x \in \Le} \sum_{0 \leq s \leq T} \big(\Delta_s \M_{\eps}(x) \big)^{5} = \eps^{13} \sum_{x \in \Le} \sum_{0 \leq s \leq T} \Delta_s \M_{\eps}(x),
\end{equation*}
where $\gamma \in \fC_1(\CCV_{\!\var})$ and $|\CCV_{\!\var}| = |\gamma_1| = 5$. Then we can write 
\begin{equation*}
\begin{tikzpicture}[scale=0.35, baseline=-0.5cm]
	\node at (0,-5.1)  [root] (root) {};
	\node at (0,-5.1) [rootlab] {$z$};
	\node at (0,0)  [dot] (int) {};
	\node at (0,2.9)  [var_very_blue] (left) {};
	\node at (0,-2.9) [dot] (cent1) {};
	\draw[testfcn] (cent1) to (root);
	\draw[keps] (left) to[bend left=90] node[labl,pos=0.4] {\tiny 3,0} (int);
	\draw[keps] (left) to[bend left=-90] node[labl,pos=0.4] {\tiny 3,0} (int);
	\draw[keps] (left) to node[labl,pos=0.6] {\tiny 3,0} (int);
	\draw[keps] (int) to node[labl,pos=0.45] {\tiny 3,1} (cent1);
	\draw[keps] (left) to[bend left=100] node[labl,pos=0.55] {\tiny 3,0} (cent1);
	\draw[keps] (left) to[bend right=100] node[labl,pos=0.55] {\tiny 3,0} (cent1);
\end{tikzpicture}
\;=\; \eps^{10}
\begin{tikzpicture}[scale=0.35, baseline=-0.5cm]
	\node at (0,-5.1)  [root] (root) {};
	\node at (0,-5.1) [rootlab] {$z$};
	\node at (0,0)  [dot] (int) {};
	\node at (0,2.9)  [var_blue] (left) {};
	\node at (0,-2.9) [dot] (cent1) {};
	\draw[testfcn] (cent1) to (root);
	\draw[keps] (left) to node[labl,pos=0.6] {\tiny 9,0} (int);
	\draw[keps] (int) to node[labl,pos=0.45] {\tiny 3,1} (cent1);
	\draw[keps] (left) to[bend left=100] node[labl,pos=0.55] {\tiny 3,0} (cent1);
	\draw[keps] (left) to[bend right=100] node[labl,pos=0.55] {\tiny 3,0} (cent1);
\end{tikzpicture},
\end{equation*}
where the stochastic integral is with respect to the martingale $\M_{\eps}$. Taking powers of $\eps$ to improve the singularities of the kernels, Corollary~\ref{cor:convolutions} allows to bound moments of this diagram by $(\lambda \vee \epsEH)^{|\tau|}$, and the remaining positive $\eps$ power makes it vanish in the limit. 
\medskip

We now turn our attention to the last two diagrams in \eqref{eq:last-tree}, which are also the most interesting ones. We can write
\begin{align} \label{eq:important_subtree}
	2\;
	\begin{tikzpicture}[scale=0.35, baseline=-0.5cm]
		\node at (0,-5.1)  [root] (root) {};
		\node at (0,-5.1) [rootlab] {$z$};
		\node at (0,0) [dot] (int) {};
		\node at (0,2.9)  [var_blue] (cent) {};
		\node at (0,-2.9) [dot] (cent1) {};
		\node at (2.5,-1.45)  [var_very_blue] (right1) {};
		\node at (-2.5,-1.45)  [var_very_blue] (left1) {};
		\draw[testfcn] (cent1) to (root);
		\draw[keps] (left1) to node[labl,pos=0.45] {\tiny 3,0} (int);
		\draw[keps] (right1) to node[labl,pos=0.45] {\tiny 3,0} (int);
		\draw[keps] (cent) to node[labl,pos=0.45] {\tiny 3,0} (int);
		\draw[keps] (int) to node[labl,pos=0.45] {\tiny 3,1} (cent1);
		\draw[keps] (right1) to node[labl,pos=0.45] {\tiny 3,0} (cent1);
		\draw[keps] (left1) to node[labl,pos=0.45] {\tiny 3,0} (cent1);
	\end{tikzpicture}
	\; -\; C^\eps_2 \;
	\begin{tikzpicture}[scale=0.35, baseline=-0.7cm]
		\node at (0,-5.1)  [root] (root) {};
		\node at (0,-5.1) [rootlab] {$z$};
		\node at (0,0)  [var_blue] (cent) {};
		\node at (0,-2.9) [dot] (cent1) {};
		\draw[testfcn] (cent1) to (root);
		\draw[keps] (cent) to node[labl,pos=0.45] {\tiny 3,0} (cent1);
	\end{tikzpicture}
	\; = \; \left(
	2\;
	\begin{tikzpicture}[scale=0.35, baseline=-0.5cm]
		\node at (0,-5.1)  [root] (root) {};
		\node at (0,-5.1) [rootlab] {$z$};
		\node at (0,0) [dot] (int) {};
		\node at (0,2.9)  [var_blue] (cent) {};
		\node at (0,-2.9) [dot] (cent1) {};
		\node at (2.5,-1.45)  [var_very_blue] (right1) {};
		\node at (-2.5,-1.45)  [var_very_blue] (left1) {};
		\draw[testfcn] (cent1) to (root);
		\draw[keps] (left1) to node[labl,pos=0.45] {\tiny 3,0} (int);
		\draw[keps] (right1) to node[labl,pos=0.45] {\tiny 3,0} (int);
		\draw[keps] (cent) to node[labl,pos=0.45] {\tiny 3,0} (int);
		\draw[keps] (int) to node[labl,pos=0.45] {\tiny 3,0} (cent1);
		\draw[keps] (right1) to node[labl,pos=0.45] {\tiny 3,0} (cent1);
		\draw[keps] (left1) to node[labl,pos=0.45] {\tiny 3,0} (cent1);
	\end{tikzpicture}
	\; -\; C^\eps_2 \;
	\begin{tikzpicture}[scale=0.35, baseline=-0.7cm]
		\node at (0,-5.1)  [root] (root) {};
		\node at (0,-5.1) [rootlab] {$z$};
		\node at (0,0)  [var_blue] (cent) {};
		\node at (0,-2.9) [dot] (cent1) {};
		\draw[testfcn] (cent1) to (root);
		\draw[keps] (cent) to node[labl,pos=0.45] {\tiny 3,0} (cent1);
	\end{tikzpicture} \right)
	\; -\; 2\;
	\begin{tikzpicture}[scale=0.35, baseline=-0.5cm]
		\node at (0,-5.1)  [root] (root) {};
		\node at (0,-5.1) [rootlab] {$z$};
		\node at (0,0) [dot] (int) {};
		\node at (0,2.9)  [var_blue] (cent) {};
		\node at (0,-2.9) [dot] (cent1) {};
		\node at (2.5,-1.45)  [var_very_blue] (right1) {};
		\node at (-2.5,-1.45)  [var_very_blue] (left1) {};
		\draw[testfcn] (cent1) to (root);
		\draw[keps] (left1) to node[labl,pos=0.45] {\tiny 3,0} (int);
		\draw[keps] (right1) to node[labl,pos=0.45] {\tiny 3,0} (int);
		\draw[keps] (cent) to node[labl,pos=0.45] {\tiny 3,0} (int);
		\draw[keps] (right1) to node[labl,pos=0.45] {\tiny 3,0} (cent1);
		\draw[keps] (left1) to node[labl,pos=0.45] {\tiny 3,0} (cent1);
		\draw [keps] (int) to[out=0,in=90] (3.5,-2.5) to [out=-90, in=0] node[labl,pos=0] {\tiny 3,0} (root);
	\end{tikzpicture}\;.
\end{align}
The last diagram can be first decomposed as 
\[
	\begin{tikzpicture}[scale=0.35, baseline=-0.5cm]
		\node at (0,-5.1)  [root] (root) {};
		\node at (0,-5.1) [rootlab] {$z$};
		\node at (0,0) [dot] (int) {};
		\node at (0,2.9)  [var_blue] (cent) {};
		\node at (0,-2.9) [dot] (cent1) {};
		\node at (2.5,-1.45)  [var_very_blue] (right1) {};
		\node at (-2.5,-1.45)  [var_very_blue] (left1) {};
		\draw[testfcn] (cent1) to (root);
		\draw[keps] (left1) to node[labl,pos=0.45] {\tiny 3,0} (int);
		\draw[keps] (right1) to node[labl,pos=0.45] {\tiny 3,0} (int);
		\draw[keps] (cent) to node[labl,pos=0.45] {\tiny 3,0} (int);
		\draw[keps] (right1) to node[labl,pos=0.45] {\tiny 3,0} (cent1);
		\draw[keps] (left1) to node[labl,pos=0.45] {\tiny 3,0} (cent1);
		\draw [keps] (int) to[out=0,in=90] (3.5,-2.5) to [out=-90, in=0] node[labl,pos=0] {\tiny 3,0} (root);
	\end{tikzpicture}\;
	=
	\begin{tikzpicture}[scale=0.35, baseline=-0.5cm]
		\node at (0,-5.1)  [root] (root) {};
		\node at (0,-5.1) [rootlab] {$z$};
		\node at (0,0) [dot] (int) {};
		\node at (0,2.9)  [var_blue] (cent) {};
		\node at (0,-2.9) [dot] (cent1) {};
		\node at (2.5,-1.45)  [var_red_square] (right1) {};
		\node at (-2.5,-1.45)  [var_red_square] (left1) {};
		\draw[testfcn] (cent1) to (root);
		\draw[keps] (left1) to node[labl,pos=0.45] {\tiny 3,0} (int);
		\draw[keps] (right1) to node[labl,pos=0.45] {\tiny 3,0} (int);
		\draw[keps] (cent) to node[labl,pos=0.45] {\tiny 3,0} (int);
		\draw[keps] (right1) to node[labl,pos=0.45] {\tiny 3,0} (cent1);
		\draw[keps] (left1) to node[labl,pos=0.45] {\tiny 3,0} (cent1);
		\draw [keps] (int) to[out=0,in=90] (3.5,-2.5) to [out=-90, in=0] node[labl,pos=0] {\tiny 3,0} (root);
	\end{tikzpicture}\;
	+ \; 2 \;
	\begin{tikzpicture}[scale=0.35, baseline=-0.5cm]
		\node at (0,-5.1)  [root] (root) {};
		\node at (0,-5.1) [rootlab] {$z$};
		\node at (0,0) [dot] (int) {};
		\node at (0,2.9)  [var_blue] (cent) {};
		\node at (0,-2.9) [dot] (cent1) {};
		\node at (2.5,-1.45)  [var_red_square] (right1) {};
		\node at (-2.5,-1.45)  [dot] (left1) {};
		\draw[testfcn] (cent1) to (root);
		\draw[keps] (left1) to node[labl,pos=0.45] {\tiny 3,0} (int);
		\draw[keps] (right1) to node[labl,pos=0.45] {\tiny 3,0} (int);
		\draw[keps] (cent) to node[labl,pos=0.45] {\tiny 3,0} (int);
		\draw[keps] (right1) to node[labl,pos=0.45] {\tiny 3,0} (cent1);
		\draw[keps] (left1) to node[labl,pos=0.45] {\tiny 3,0} (cent1);
		\draw [keps] (int) to[out=0,in=90] (3.5,-2.5) to [out=-90, in=0] node[labl,pos=0] {\tiny 3,0} (root);
	\end{tikzpicture}\;
	+
	\begin{tikzpicture}[scale=0.35, baseline=-0.5cm]
		\node at (0,-5.1)  [root] (root) {};
		\node at (0,-5.1) [rootlab] {$z$};
		\node at (0,0) [dot] (int) {};
		\node at (0,2.9)  [var_blue] (cent) {};
		\node at (0,-2.9) [dot] (cent1) {};
		\node at (2.5,-1.45)  [dot] (right1) {};
		\node at (-2.5,-1.45)  [dot] (left1) {};
		\draw[testfcn] (cent1) to (root);
		\draw[keps] (left1) to node[labl,pos=0.45] {\tiny 3,0} (int);
		\draw[keps] (right1) to node[labl,pos=0.45] {\tiny 3,0} (int);
		\draw[keps] (cent) to node[labl,pos=0.45] {\tiny 3,0} (int);
		\draw[keps] (right1) to node[labl,pos=0.45] {\tiny 3,0} (cent1);
		\draw[keps] (left1) to node[labl,pos=0.45] {\tiny 3,0} (cent1);
		\draw [keps] (int) to[out=0,in=90] (3.5,-2.5) to [out=-90, in=0] node[labl,pos=0] {\tiny 3,0} (root);
	\end{tikzpicture}\;
\]
and applying Corollary~\ref{cor:convolutions} with $\nu_\ga = -\frac{11}{2}$, each term can be bounded by $\big( \lambda \vee \epsEH \big)^{-\frac{11}{2}} \eps^5 \leq \big( \lambda \vee \epsEH \big)^{-\frac{1}{2}}$. Then the expression in the brackets in \eqref{eq:important_subtree} can instead be written as
\begin{equation} \label{eq:last_tree_decomposition}
	2\;
	\begin{tikzpicture}[scale=0.35, baseline=-0.5cm]
		\node at (0,-5.1)  [root] (root) {};
		\node at (0,-5.1) [rootlab] {$z$};
		\node at (0,0) [dot] (int) {};
		\node at (0,2.9)  [var_blue] (cent) {};
		\node at (0,-2.9) [dot] (cent1) {};
		\node at (2.5,-1.45)  [var_very_blue] (right1) {};
		\node at (-2.5,-1.45)  [var_very_blue] (left1) {};
		\draw[testfcn] (cent1) to (root);
		\draw[keps] (left1) to node[labl,pos=0.45] {\tiny 3,0} (int);
		\draw[keps] (right1) to node[labl,pos=0.45] {\tiny 3,0} (int);
		\draw[keps] (cent) to node[labl,pos=0.45] {\tiny 3,0} (int);
		\draw[keps] (int) to node[labl,pos=0.45] {\tiny 3,0} (cent1);
		\draw[keps] (right1) to node[labl,pos=0.45] {\tiny 3,0} (cent1);
		\draw[keps] (left1) to node[labl,pos=0.45] {\tiny 3,0} (cent1);
	\end{tikzpicture}
	\; -\; C^\eps_2\;
	\begin{tikzpicture}[scale=0.35, baseline=-0.7cm]
		\node at (0,-5.1)  [root] (root) {};
		\node at (0,-5.1) [rootlab] {$z$};
		\node at (0,0)  [var_blue] (cent) {};
		\node at (0,-2.9) [dot] (cent1) {};
		\draw[testfcn] (cent1) to (root);
		\draw[keps] (cent) to node[labl,pos=0.45] {\tiny 3,0} (cent1);
	\end{tikzpicture} 
	\;=\;
	2 \;
	\begin{tikzpicture}[scale=0.35, baseline=-0.5cm]
		\node at (0,-5.1)  [root] (root) {};
		\node at (0,-5.1) [rootlab] {$z$};
		\node at (0,0) [dot] (int) {};
		\node at (0,2.9)  [var_blue] (cent) {};
		\node at (0,-2.9) [dot] (cent1) {};
		\node at (2.5,-1.45)  [var_red_square] (right1) {};
		\node at (-2.5,-1.45)  [var_red_square] (left1) {};
		\draw[testfcn] (cent1) to (root);
		\draw[keps] (left1) to node[labl,pos=0.45] {\tiny 3,0} (int);
		\draw[keps] (right1) to node[labl,pos=0.45] {\tiny 3,0} (int);
		\draw[keps] (cent) to node[labl,pos=0.45] {\tiny 3,0} (int);
		\draw[keps] (int) to node[labl,pos=0.45] {\tiny 3,0} (cent1);
		\draw[keps] (right1) to node[labl,pos=0.45] {\tiny 3,0} (cent1);
		\draw[keps] (left1) to node[labl,pos=0.45] {\tiny 3,0} (cent1);
	\end{tikzpicture}
	+ 4 \;
	\begin{tikzpicture}[scale=0.35, baseline=-0.5cm]
		\node at (0,-5.1)  [root] (root) {};
		\node at (0,-5.1) [rootlab] {$z$};
		\node at (0,0) [dot] (int) {};
		\node at (0,2.9)  [var_blue] (cent) {};
		\node at (0,-2.9) [dot] (cent1) {};
		\node at (2.5,-1.45)  [var_red_square] (right1) {};
		\node at (-2.5,-1.45)  [dot] (left1) {};
		\draw[testfcn] (cent1) to (root);
		\draw[keps] (left1) to node[labl,pos=0.45] {\tiny 3,0} (int);
		\draw[keps] (right1) to node[labl,pos=0.45] {\tiny 3,0} (int);
		\draw[keps] (cent) to node[labl,pos=0.45] {\tiny 3,0} (int);
		\draw[keps] (int) to node[labl,pos=0.45] {\tiny 3,0} (cent1);
		\draw[keps] (right1) to node[labl,pos=0.45] {\tiny 3,0} (cent1);
		\draw[keps] (left1) to node[labl,pos=0.45] {\tiny 3,0} (cent1);
	\end{tikzpicture}
	\;+\; \left( \; 2 \;
	\begin{tikzpicture}[scale=0.35, baseline=-0.5cm]
		\node at (0,-5.1)  [root] (root) {};
		\node at (0,-5.1) [rootlab] {$z$};
		\node at (0,-5.3) {$$};
		\node at (0,0) [dot] (int) {};
		\node at (0,2.9)  [var_blue] (cent) {};
		\node at (0,-2.9) [dot] (cent1) {};
		\node at (2.5,-1.45)  [dot] (right1) {};
		\node at (-2.5,-1.45)  [dot] (left1) {};
		\draw[testfcn] (cent1) to (root);
		\draw[keps] (left1) to node[labl,pos=0.45] {\tiny 3,0} (int);
		\draw[keps] (right1) to node[labl,pos=0.45] {\tiny 3,0} (int);
		\draw[keps] (cent) to node[labl,pos=0.45] {\tiny 3,0} (int);
		\draw[keps] (int) to node[labl,pos=0.45] {\tiny 3,0} (cent1);
		\draw[keps] (right1) to node[labl,pos=0.45] {\tiny 3,0} (cent1);
		\draw[keps] (left1) to node[labl,pos=0.45] {\tiny 3,0} (cent1);
	\end{tikzpicture}
	\; -\; C^\eps_2 \;
	\begin{tikzpicture}[scale=0.35, baseline=-0.7cm]
		\node at (0,-5.1)  [root] (root) {};
		\node at (0,-5.1) [rootlab] {$z$};
		\node at (0,0)  [var_blue] (cent) {};
		\node at (0,-2.9) [dot] (cent1) {};
		\draw[testfcn] (cent1) to (root);
		\draw[keps] (cent) to node[labl,pos=0.45] {\tiny 3,0} (cent1);
	\end{tikzpicture}
	\right),
\end{equation}
The first two diagrams above can again be bounded using Corollary~\ref{cor:convolutions} by a constant multiple of $(\lambda \vee \epsEH)^{- 1/ 2}$.

The last expression in the brackets in \eqref{eq:last_tree_decomposition} needs more attention. Let us define a new kernel 
\[
	\CG_\eps (z_1, z_2) \; := \;
	\begin{tikzpicture}[scale=0.35, baseline=-0.55cm]
		\node at (0,0) [root] (int) {};
		\node at (0, 0.7) [] () {$z_2$};
		\node at (0,-2.9) [root] (cent) {};
		\node at (0,-3.6) [] () {$z_1$};
		\node at (2.5,-1.45) [dot] (right1) {};
		\node at (-2.5,-1.45) [dot] (left1) {};
		\draw[keps] (left1) to node[labl,pos=0.45] {\tiny 3,0} (int);
		\draw[keps] (right1) to node[labl,pos=0.45] {\tiny 3,0} (int);
		\draw[keps] (int) to node[labl,pos=0.45] {\tiny 3,0} (cent);
		\draw[keps] (right1) to node[labl,pos=0.45] {\tiny 3,0} (cent);
		\draw[keps] (left1) to node[labl,pos=0.45] {\tiny 3,0} (cent);
	\end{tikzpicture}\;,
\]
which is in fact a function of the difference of the arguments, i.e. $\CG_\eps (z_1, z_2) = \CG_\eps (z_2 - z_1)$. As follows from the order of the singularity of the kernel $K^{\epsEH}$ and \cite[Lemma~7.3]{HM18}, the function $\CG_\eps$ satisfies $|D^k \CG_\eps(z)| \lesssim (\|z\|_\s + \epsEH)^{-5 - |k|_\s}$ for all multiindices $k$ with $|k|_\s$ large enough. Then we conclude from Lemma~\ref{lem:K_bound} that the function $\CG_\eps$ has all the properties listed in Assumption~\ref{a:Kernels} with the values $a_e = 5$ and $r_e = 0$. We denote the kernel $\CG_\eps$ by an edge ``\,\tikz[baseline=-0.1cm] \draw[kernelBig] (0,0) to node[labl,pos=0.45] {\tiny 5,0} (1,0);\,''. Then the first diagram in the brackets in \eqref{eq:last_tree_decomposition} can be represented as
\begin{equation*}
	\begin{tikzpicture}[scale=0.35, baseline=0cm]
		\node at (0,0)  [root] (root) {};
		\node at (0,0) [rootlab] {$z$};
		\node at (2.2,0) [dot] (cent1) {};
		\node at (5.1,0) [dot] (int) {};
		\node at (7.7,0)  [var_blue] (cent) {};
		\draw[testfcn] (cent1) to (root);
		\draw[keps] (cent) to node[labl,pos=0.45] {\tiny 3,0} (int);
		\draw[kernelBig] (int) to node[labl,pos=0.45] {\tiny 5,0} (cent1);
	\end{tikzpicture}\;,
\end{equation*}
and one can see that this diagram does not satisfy Assumption~\ref{a:mainContraction}(\ref{it:mainContraction-1}) (recall that $|\s| = 5$). To resolve this problem, we need to use a negative renormalisation (in the sense of Section~\ref{sec:renormalisation}) of the kernel $\CG_\eps$. More precisely, for any smooth function $\eta: \R^4 \times \R^4 \to \R$, we define its negative renormalisation as
\[
	\big( \SR_\eps \CG_\eps \big) (\eta) := \int_{D_\eps} \int_{D_\eps} \CG_\eps(z_1, z_2) \big( \eta(z_1, z_2) - \eta(z_1, z_1) \big) \, \d z_1 \d z_2,
\]
and we graphically depict $\SR_\eps \CG_\eps$ as ``\,\tikz[baseline=-0.1cm] \draw[kernelBig] (0,0) to node[labl,pos=0.45] {\tiny 5,-1} (1,0);\,'', where the label ``$-1$'' refers to the order of renormalisation. Since the renormalisation constant \eqref{e:Phi_C2} can be represented as
\begin{equation*}
 C^\eps_2 = 2 \;
 \begin{tikzpicture}[scale=0.35, baseline=-0.55cm]
		\node at (0,0) [dot] (int) {};
		\node at (0,-2.9) [root] (cent) {};
		\node at (0,-2.9) [rootlab] {$0$};
		\node at (2.5,-1.45) [dot] (right1) {};
		\node at (-2.5,-1.45) [dot] (left1) {};
		\draw[keps] (left1) to node[labl,pos=0.45] {\tiny 3,0} (int);
		\draw[keps] (right1) to node[labl,pos=0.45] {\tiny 3,0} (int);
		\draw[keps] (int) to node[labl,pos=0.45] {\tiny 3,0} (cent);
		\draw[keps] (right1) to node[labl,pos=0.45] {\tiny 3,0} (cent);
		\draw[keps] (left1) to node[labl,pos=0.45] {\tiny 3,0} (cent);
	\end{tikzpicture}\;,
\end{equation*}
the expression in the brackets in \eqref{eq:last_tree_decomposition} equals
\begin{equation*}
	2\; \begin{tikzpicture}[scale=0.35, baseline=-0.1cm]
		\node at (0,0)  [root] (root) {};
		\node at (0,0) [rootlab] {$z$};
		\node at (2.2,0) [dot] (cent1) {};
		\node at (5.1,0) [dot] (int) {};
		\node at (7.7,0)  [var_blue] (cent) {};
		\draw[testfcn] (cent1) to (root);
		\draw[keps] (cent) to node[labl,pos=0.45] {\tiny 3,0} (int);
		\draw[kernelBig] (int) to node[labl,pos=0.45] {\tiny 5,-1} (cent1);
	\end{tikzpicture}\;.
\end{equation*}
This diagram satisfies Assumption~\ref{a:mainContraction}, and using Corollary~\ref{cor:convolutions} we bound its moments by a constant multiple of $(\lambda \vee \epsEH)^{-\bar\kappa}$ for any $\bar\kappa > 0$.

\bibliographystyle{Martin}
\bibliography{bibliography}

\end{document}